\newcommand{\normmm}[1]{{\left\vert\kern-0.25ex\left\vert\kern-0.25ex\left\vert #1 
		\right\vert\kern-0.25ex\right\vert\kern-0.25ex\right\vert}}
\numberwithin{equation}{section}
\newtheorem{Theorem}{Theorem}[section]
\newtheorem{Lemma}[Theorem]{Lemma}
\newtheorem{Corollary}[Theorem]{Corollary}
\newtheorem{Remark}[Theorem]{Remark}
\newtheorem{Definition}[Theorem]{Definition}
\theoremstyle{definition}
\title[Tracking Dirichlet data for Bernoulli free boundary problems]{On the well-posedness of tracking Dirichlet data for Bernoulli free boundary problems}
\author{WEI GONG AND LE LIU}
\thanks{$^*$School of Mathematical Sciences, University of  Chinese Academy of Sciences \& LSEC, Institute of Computational Mathematics, Academy of Mathematics and Systems Science, Chinese Academy of Sciences, Beijing 100190, China. Email: wgong@lsec.cc.ac.cn; liule2020@lsec.cc.ac.cn. The authors acknowledge the support from the National Key Research and
Development Program of China (project no. 2022YFA1004402) and
the National Natural Science Foundation of China (project no. 12071468).}
\begin{document}
	\maketitle
	\begin{abstract}
The aim of this paper is to study the shape optimization method for solving the Bernoulli free boundary problem, a well-known ill-posed problem that seeks the unknown free boundary through Cauchy data. Different formulations have been proposed in the literature that differ in the choice of the objective functional. Specifically, it was shown respectively in \cite{eppler2010tracking1} and  \cite{eppler2010tracking2} that tracking Neumann data is well-posed but tracking Dirichlet data is not. In this paper we propose a new well-posed objective functional that tracks Dirichlet data at the free boundary. By calculating the
 Euler derivative and the shape Hessian of the objective functional we show that the new formulation is well-posed, i.e., the shape Hessian is coercive at the minima. The coercivity of the shape Hessian may ensure the existence of optimal solutions for the nonlinear Ritz-Galerkin approximation method and its convergence, thus is crucial for the formulation. As a summary, we conclude that tracking Dirichlet or Neumann data in their energy norm is not sufficient, but tracking them in a half an order higher norm will be well-posed. To support our theoretical results we carry out extensive numerical experiments.
	\end{abstract}
 
\smallskip
\noindent \textbf{Keywords.} Free boundary problems, Shape optimization, Euler derivative, Shape Hessian, Coercivity.

\section{introduction}

In this paper we study the so-called exterior Bernoulli free boundary problem that is described below: Given sufficient smooth functions $f \geqslant 0$, $g>0$ and $h>0$, find a domain $\Omega$ (or free boundary $\Gamma$) that satisfies the following over-determined boundary value problem:
\begin{equation*}
  (\mathbf{P} )\quad
    \begin{cases}
        -\Delta u=f \quad {\rm in}\quad \Omega ,\\
        -\frac{\partial u}{\partial \textbf{n}}=g,\ u=0 \quad {\rm on}\quad \Gamma, \\
        u=h\quad {\rm on}\quad\Sigma,
        \end{cases}
\end{equation*}
where the inner boundary $\Sigma$  of the domain $\Omega$ is fixed and the outer boundary $\Gamma$ is the free boundary, we refer to Fig. \ref{fig:c06h06_trim} for an illustration.

\begin{figure*}[!htbp]
    \centering
    \includegraphics[trim = 3mm 4mm 3mm 3mm, clip, width=0.40\textwidth]{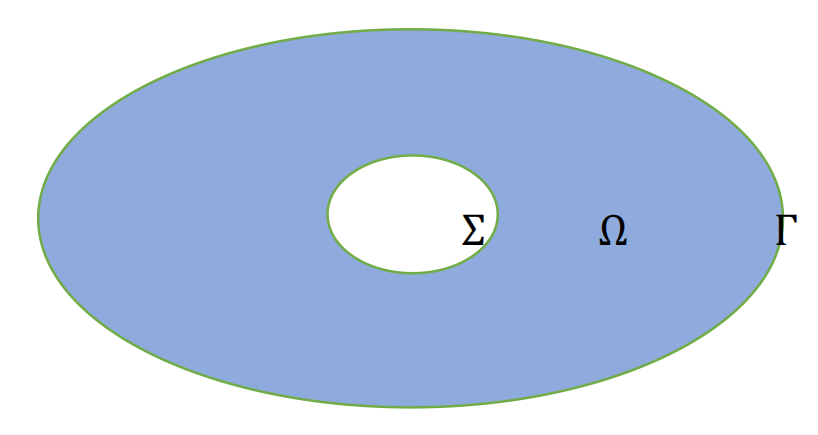}
    \caption{An illustration of the domain $\Omega$, its fixed boundary $\Sigma$ and free boundary $\Gamma$.}
    \label{fig:c06h06_trim}
\end{figure*}

Bernoulli free boundary problems arise in many applications, including the ideal fluid dynamics, optimal design, electro chemistry, electro statics, to name a few. In this paper, we do not study the existence of a solution to problem $(\mathbf{P})$. Instead, we assume that there exists a solution to problem $(\mathbf{P})$ and denote it by $\Omega^*$, and denote by $\Gamma^*$ the free boundary of $\Omega^*$. For the existence of solutions we refer to  \cite{caffarelli1981existence} for more details.

There are already some classical methods for solving Bernoulli free boundary problems, including the variational methods, the implicit Neumann scheme and so on, more details can be found in \cite{bach1999variational}. Shape optimization approach is among one of the most popular numerical methods for solving free boundary problems (cf. \cite{Haslinger,Brugger, Burman}). In \cite{eppler2010tracking1}, Eppler and Harbrecht proposed to track Neumann data at the free boundary $\Gamma$ to solve problem $(\mathbf{P})$, that is, to solve the following shape optimization problem:
\begin{equation}
    \inf_{\Omega } J_1(\Omega )=\frac{1}{2}\int_{\Gamma} \left(g+\frac{\partial u}{\partial \textbf{n}}\right)^2 d\sigma,
    \label{con:obj1}    
\end{equation}
    subject to
    \begin{equation}
        \begin{cases}
        -\Delta u=f \quad {\rm in}\quad \Omega, \\
        u=0 \quad {\rm on}\quad \Gamma, \\
        u=h \quad {\rm on}\quad\Sigma. 
        \end{cases}
    \label{con:j1}
\end{equation}
On the other hand, in \cite{eppler2010tracking2} the authors proposed to track Dirichlet data at the free boundary $\Gamma$, that is, 
\begin{equation}
    \inf_{\Omega } J_2(\Omega )=\frac{1}{2}\int_{\Gamma} u^2 d\sigma,
    \label{con:obj2}
\end{equation}
 subject to
    \begin{equation}
        \begin{cases}
        -\Delta u=f \quad {\rm in}\quad \Omega, \\
        -\frac{\partial u}{\partial \textbf{n}}=g \quad {\rm on}\quad \Gamma, \\
        u=h \quad {\rm on}\quad\Sigma .
        \end{cases}
        \label{con:j2}
\end{equation}

Obviously, if there exists $\Omega^*$ (or $\Gamma^*$) that is a solution to problem $(\mathbf{P})$, then $\Omega^*$ must be the optimal solution to the shape optimization problem (\ref{con:obj1})–(\ref{con:j1}) (and $J_1(\Omega^*)=0$); conversely, if $J_1(\Omega^*)=0$, then $\Omega^*$ is also a solution to problem $(\mathbf{P})$. The same property holds for the objective functional $J_2$.

When using the shape optimization method to solve the problem $(\mathbf{P})$, the objective functional $J$ should be chosen to satisfy the following property:
\begin{itemize}
    \item (SP): $J$ is a non-negative functional, i.e. $J(\Omega) \geq 0$ for all $\Omega$;
$J(\Omega^*)=0$ if and only if $\Omega^*$ is the solution to the problem $(\mathbf{P})$.
\end{itemize}
Obviously, $J_1$ and $J_2$ satisfy the property (SP).

Shape optimization problem is generally strongly nonconvex, so we usually expect only local minima. The convergence of shape optimization algorithms to the local minimum depends not only on the choice of the initial guess, but also on the local property of the minimum, i.e., the second order sufficient  optimality condition to ensure the local optimality. However, the latter depends strongly on the formulation of the shape optimization problem, or in other words, on the choice of the objective functional. Taking the Bernoulli free boundary problem as an example, we prefer to choose the objective functional such that its value is strictly greater than $0$ in the neighborhood of $\Omega^*$ except at $\Omega^*$, that is, to ensure the local uniqueness of the optimal solution $\Omega^*$ of the shape optimization problem. We call such an objective functional well-posed, or the corresponding shape optimization problem is well-posed.

The well-posedness of shape optimization problems is rarely studied in the literature (cf. \cite{EpplerCC,eppler2010tracking1,eppler2010tracking2}). However, it is indispensable for the convergence analysis of shape optimization algorithms. In \cite{Kiniger} and \cite{Fumagalli} a two-dimensional shape optimization problem
with the portion of the boundary to be optimized being the graph of a function
was studied for elliptic and Stokes equations, respectively. The shape optimization problem was transformed into an optimal control problem and second-order convergence of the numerical approximations to a local solution of the optimization problem was proved under the second-order
sufficient optimality condition. The boundary parametrization of an elliptic shape optimization problem was considered in \cite{eppler2007convergence}, where error estimates for a finite element method (FEM) were obtained
under the assumption that the optimal domain is star-shaped and the infinite dimensional
shape optimization problem admits a stable optimizer satisfying the
second-order optimality condition. 

For a systematic study of the well-posedness of shape optimization problems we refer to the work of Eppler and Harbrecht (cf. \cite{eppler2010tracking1,eppler2010tracking2,eppler2006efficient,eppler2012kohn}). They considered the Euler derivative and shape Hessian for shape optimization problems posed on star-shaped domains, and studied the well-posedness of shape functionals by analyzing their coercivity at the minima. For star-shaped domains the free boundary can be represented by a function $r\in X:=C^{k,\alpha}(\mathbb{S} ^{n-1})$, where $\mathbb{S} ^{n-1}$ is the unit sphere in $\mathbb{R} ^{n}$. Specifically,
\begin{equation*}
    \Gamma =\left\{x=r(\widehat{x} )\widehat{x}:\quad \widehat{x}\in \mathbb{S} ^{n-1}  \right\} .
\end{equation*}
Let $dr\in X$ be a function that can generate a velocity field that transforms $\Gamma$ into the perturbed one
\begin{equation*}
    \Gamma_{\varepsilon }=\left\{x=r(\widehat{x} )\widehat{x}+\varepsilon dr(\widehat{x})\widehat{x}:\quad \widehat{x}\in \mathbb{S} ^{n-1}  \right\} .
\end{equation*} 
Similarly, when studying the shape Hessian, the associated boundary is given by
\begin{equation*}
     \Gamma_{\varepsilon_1, \varepsilon_2}=\left\{x=r(\widehat{x} )\widehat{x}+\varepsilon_1dr_1(\widehat{x})\widehat{x}+\varepsilon_2dr_2(\widehat{x})\widehat{x}:\quad \widehat{x}\in \mathbb{S} ^{n-1}  \right\} ,    
\end{equation*}
where $dr_1,dr_2\in X$.

For the problem (\ref{con:obj1})-(\ref{con:j1}) that tracks Neumann data, Eppler and Harbrecht pointed out in \cite{eppler2010tracking1} that the first-order optimality condition $dJ_1(\Omega^*)[dr]=0$ holds for all $dr \in X$ at the solution $\Omega^*$ of the Bernoulli free boundary problem, and the shape Hessian is a bilinear continuous functional on $H^1(\Gamma)\times H^1(\Gamma)$ in the neighborhood $B_\delta ^X(r^*)$ of $\Omega^*$, that is,
\begin{equation*}
    \left\lvert d^2J_1(\Omega)[dr_1,dr_2]\right\rvert 
\leq c(r)\left\lVert dr_1\right\rVert _{H^1(\Gamma)}
\left\lVert dr_2\right\rVert _{H^1(\Gamma)}
\quad \forall r \in B_\delta ^X(r^*),
\end{equation*}
and the corresponding second-order Taylor remainder  $R_2(J_1(\Omega),dr)$ satisfies
\begin{equation*}
    \left\lvert R_2(J_1(\Omega),dr) \right\rvert 
    =o(\left\lVert dr\right\rVert _{C^{3,\alpha}(\Gamma)})
    \left\lVert dr\right\rVert _{H^1(\Gamma)}^2.
\end{equation*}
Then it was shown in \cite{eppler2007convergence} that if the coercivity condition
\begin{equation*}
d^2J_1(\Omega^*)[dr,dr]\geq c_E\left\lVert dr\right\rVert _{H^1(\Gamma)}^2\quad \forall dr \in X
\end{equation*}
is satisfied, then the objective functional $J_1$ is well-posed and the existence of optimal solutions and the convergence of the nonlinear Ritz-Galerkin method are guaranteed. The authors did prove in \cite{eppler2010tracking1} that the coercivity holds for the objective functional $J_1$.

For the problem (\ref{con:obj2})-(\ref{con:j2}) that tracks Dirichlet data, Eppler and Harbrecht  showed in \cite{eppler2010tracking2} that the shape Hessian is also a bilinear continuous functional on $H^1(\Gamma)\times H^1(\Gamma)$ in the neighborhood $B_\delta ^X(r^*)$ of $\Omega^*$,
but they only proved that
\begin{equation*}
    d^2J_2(\Omega^*)[dr,dr]\geq c_E\left\lVert dr\right\rVert _{L^2(\Gamma^*)}^2\quad \forall dr \in X ,
\end{equation*}
so we cannot determine whether $J_2$ is well-posed. It was further shown in \cite{eppler2010tracking2} that $J_2$ is algebraically ill-posed.

In addition, Eppler and Harbrecht proposed in 
\cite{eppler2006efficient} the following 
shape optimization problem for tracking Neumann data:
\begin{equation}
    \inf_{\Omega } J_3(\Omega )=\int_{\Omega}(\left\lVert \nabla u\right\rVert^2-2fu+g^2)dx ,
\end{equation}
 subject to
\begin{equation}
        \begin{cases}
        -\Delta u=f \quad {\rm in}\quad \Omega ,\\
        u=0 \quad {\rm on}\quad \Gamma, \\
        u=h \quad {\rm on}\quad\Sigma .
        \end{cases}
\end{equation}
They proved that the objective functional $J_3$ is also well-posed, where the shape Hessian is a bilinear continuous functional defined on $H^{1/2}(\Gamma)\times H^{1/2}(\Gamma)$, and
\begin{equation*}
    d^2J_3(\Omega^*)[dr,dr]\geq c_E\left\lVert dr\right\rVert _{H^{1/2}(\Gamma^*)}^2\quad \forall dr \in X .
\end{equation*}

It is worth mentioning that Kohn and Vogelius proposed the following shape optimization problem \cite{leugering2012constrained,kohn1984determining}:
\begin{equation}\label{J4_obj}
    \inf_{\Omega } J_4(\Omega )=\int_{\Omega}\left\lVert \nabla (v-w)\right\rVert^2 dx,
\end{equation}
subject to
\begin{equation}\label{J4_state}
        \begin{cases}
        -\Delta v=f \quad {\rm in}\quad \Omega, \\
        v=0 \quad {\rm on}\quad \Gamma, \\
        v=h \quad {\rm on}\quad\Sigma
        \end{cases}\quad
        \text{and}\quad
        \begin{cases}
            -\Delta w=f \quad {\rm in}\quad \Omega, \\
            -\frac{\partial w}{\partial \textbf{n}}=g \quad {\rm on}\quad\Gamma, \\
            w=0 \quad {\rm on}\quad \Sigma.
        \end{cases}
\end{equation}
Eppler and Harbrecht proved in \cite{eppler2012kohn} that 
$J_4(\Omega) \sim
\left\lVert w\right\rVert _{H^{1/2}(\Gamma)} \left\lVert g+\frac{\partial v}{\partial n} \right\rVert _{H^{-1/2}(\Gamma)}$. Therefore, this problem simultaneously tracks Dirichlet and Neumann data at the free boundary. They pointed out that the shape Hessian is a bilinear continuous functional on $H^1(\Gamma)\times H^1(\Gamma)$ in the neighborhood $B_\delta ^X(r^*)$ of $\Omega^*$, 
but they only proved that
\begin{equation*}
d^2J_4(\Omega^*)[dr,dr]\geq c_E\left\lVert dr\right\rVert _{H^{1/2}(\Gamma^*)}^2 \quad \forall dr \in X,
\end{equation*}
so again we can not determine whether $J_4$ is well-posed, see \cite{eppler2012kohn} for more details.

In the past decades, several other objective functionals have also been proposed to solve Bernoulli free boundary problems (cf. \cite{Boulkhemair,Rabago}). However, to the best of our knowledge, there is no well-posed objective functional for tracking Dirichlet data at the free boundary. In this paper we propose a new objective functional and consider the following shape optimization problem:
\begin{equation}\label{con:jnew}
    \inf_{\Omega } J(\Omega )=\frac{1}{2}\int_{\Gamma}\left(\frac{\partial w}{\partial \textbf{n}}\right)^2 d\sigma ,
\end{equation}
subject to
\begin{equation}
        \begin{cases}
        -\Delta u=f \quad {\rm in}\quad \Omega, \\
        -\frac{\partial u}{\partial \textbf{n}}=g \quad {\rm on}\quad \Gamma ,\\
        u=h \quad {\rm on}\quad\Sigma 
        \end{cases}\quad
        \text{and}\quad
        \begin{cases}
            -\Delta w=0 \quad {\rm in}\quad \Omega ,\\
            w=u \quad {\rm on}\quad\Gamma, \\
            w=0 \quad {\rm on}\quad \Sigma.
        \end{cases}
        \label{con:equ1d6}
\end{equation}
Obviously, this objective functional tracks Dirichlet data at the free boundary and satisfies the property (SP). We show that this objective functional is well-posed by analysing the coercivity of its shape Hessian. The coercitivy of the shape Hessian is crucial for the formulation. In fact, it will ensure the existence of optimal solutions for the nonlinear Ritz-Galerkin approximation method and its convergence (cf. \cite{eppler2007convergence}).

The structure of this article is arranged as follows: In Section 2 we will give relevant knowledge of shape optimizations, and give also the optimality theorem of shape optimization problems.  In Section 3, we will compute the Euler derivative and the shape Hessian of our proposed new objective functional, and show the well-posedness of the shape optimization problem by proving the coercivity of the shape Hessian. In Section 4 we give the motivation to choose the new objective functional, some discussions are also presented.  In Section 5, we will carry out some numerical experiments to support our theory.

\section{Preliminaries}

This section is divided into two parts. On the one hand, we introduce some basic knowledge of shape optimizations, including the velocity field, the transformation of domains, the material and shape derivatives, the Euler derivative and shape Hessian. This part mainly refers to \cite{delfour2011shapes,sokolowski1992introduction}. On the other hand, we will give the optimality theorem of shape optimization problems, we refer to \cite{eppler2007convergence} for more details.

\subsection{Basic knowledge of shape optimizations}

Shape calculus concerns the variation of the objective functional with respect to the perturbation of the domain. There are two common approaches to achieve domain perturbations, one is the velocity method, and the other is the direct transformation method. In this paper we adopt the former one.

$V$ is called a velocity field, if for $\tau >0$, $V:[0,\tau]\times \mathbb{R}^n\rightarrow \mathbb{R}$ satisfies 
    \begin{gather*}
    \forall x \in \mathbb{R}^n, { }V(\cdot,x) \in C([0,\tau];\mathbb{R}^n),\\
    \exists c>0,\forall x,y \in \mathbb{R}^n, { }\left\lVert V(\cdot,y)-V(\cdot,x) \right\rVert _{C([0,\tau];\mathbb{R}^n)} 
    \leq c\left\lvert y-x\right\rvert.
    \end{gather*}
In the following, $V(0)$ represents the vector-valued function $V(0,x)$.
The transformation of the domain can be realized by the velocity field. In fact,
let $x_V(t;X)$ denote the solution of the differential equation
\begin{equation}
    \begin{cases}
        \frac{dx}{dt}(t)=V(t,x(t)) \quad t \in [0,\tau] ,\\
        x(0)=X \in \Omega,
    \end{cases}
\end{equation}
then the transformation can be defined as follows:
\begin{equation*}
    X \mapsto T_t(V)(X)\triangleq x_V(t;X) :\Omega \rightarrow \Omega _t.
\end{equation*}
Sometimes $\Omega _t$ is also written as $\Omega_t(V)$. We refer to Fig. \ref{fig:qybh} for an illustration of the transformation of domains.

\begin{figure}[!htbp]
    \centering
    \includegraphics[trim = 3mm 4mm 3mm 3mm, clip, width=0.8\textwidth]{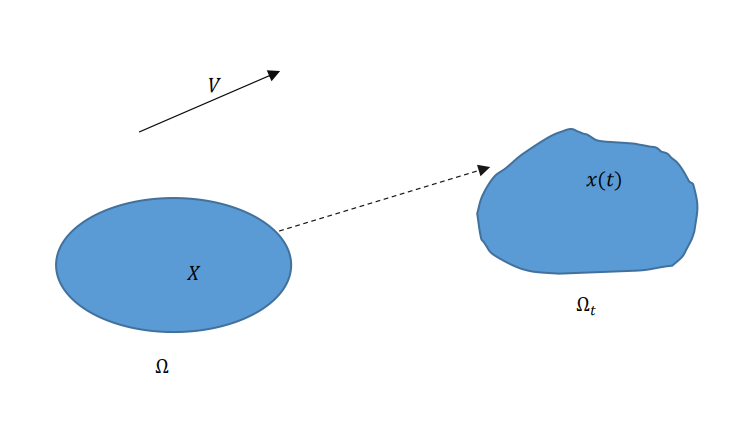}
    \caption{The transformation of domains.}
    \label{fig:qybh}
\end{figure}


Now we can introduce the definitions of the Euler derivative and shape Hessian. 
\begin{Definition}
    Let $\tau >0$, $V$ is a velocity field, $J$ is a real-valued shape functional, if the limit
    \begin{equation*}
    \lim_{t \to 0^+} \frac{J(\Omega _t)-J(\Omega)}{t} 
\end{equation*}
    exists, it is called the Euler derivative of $J$ at $\Omega$ along the velocity field $V$, denoted by $dJ(\Omega)[V]$. Let $W$ be another velocity field, we know that $dJ(\Omega)[V]$ is also a real-valued shape functional, if the limit
\begin{equation*}
    \lim_{t \to 0^+} \frac{dJ(\Omega_t(W))[V]-dJ(\Omega)[V]}{t} 
\end{equation*}
exists, it is called the shape Hessian
of $J$ at $\Omega$ along the velocity fields $V$ and $W$, denoted by $d^2J(\Omega)[V,W]$.
\end{Definition}

For functions defined from $\mathbb{R}^n$ to $\mathbb{R}$ we can define the material and shape derivatives, for which we distinguish the domain-type and the boundary-type \cite[Chapter 2, pp. 98-114]{sokolowski1992introduction}.

\begin{Definition}
    Let $\Omega$ be a bounded domain with a $C^k$ boundary $\partial \Omega$  and let $V$ be a velocity field. For $y(\Omega)\in W^{s,p}(\Omega)$, $s\in [0,k]$, $p \in [1,\infty)$, if the limit
\begin{equation*}
    \lim_{t \to 0^+} \frac{y(\Omega _t)\circ T_t(V)-y(\Omega)}{t}
\end{equation*}
exists, it is called the material derivative of $y$ at $\Omega$ along the velocity field $V$, denoted by $\overset{\circ }{y}(\Omega;V)\in W^{s,p}(\Omega)$. For $y(\Gamma)\in W^{r,p}(\Gamma)$, $r\in [0,k]$, $p \in [1,\infty)$,
if the limit
\begin{equation*}
    \lim_{t \to 0^+} \frac{y(\Gamma _t)\circ T_t(V)-y(\Gamma)}{t}
\end{equation*}
exists, it is called the material derivative of $y(\Gamma)$ along the velocity field $V$, denoted by $\overset{\circ}{y}(\Gamma;V)\in W^{r,p}(\Gamma)$.
\end{Definition}

The definition of shape derivatives will be given by the material derivative.
\begin{Definition}
      Let $\Omega$ be a bounded domain with a $C^k$ boundary $\partial \Omega=\Gamma$. Let $V$ be a velocity field and $W(\Omega)$ be a Sobolev space.
      If $y(\Omega) \in W(\Omega)$ has a material derivative $\overset{\circ }{y}(\Omega;V)$, 
      then 
      $y'(\Omega;V)=\overset{\circ}{y}(\Omega;V)-\nabla y(\Omega)\cdot V(0)\in W(\Omega)$
      is called the shape derivative of $y(\Omega)$ along the velocity field $V$. Let $W(\Gamma)$ be another Sobolev space.
      If $z(\Gamma) \in W(\Gamma)$ has a material derivative $\overset{\circ }{z}(\Gamma;V)$, 
      then 
      $z'(\Gamma;V)=\overset{\circ}{z}(\Omega;V)-\nabla_{\Gamma} z(\Gamma)\cdot V(0)\in W(\Gamma)$
      is called the shape derivative of $z(\Gamma)$ along the velocity field $V$.
\end{Definition}

The shape derivative of the solution of partial differential equations plays an important role in the following analysis.
The proof of the following two theorems can be found in  
\cite[Chapter 3, pp. 118-121]{sokolowski1992introduction}.
\begin{Theorem}\label{thm:dl22}
    Let $h(\Omega) \in L^2(\Omega)$, $z(\Gamma) \in H^{1/2}(\Gamma)$, and 
\begin{equation}
    \begin{cases}
        -\Delta y(\Omega)=h(\Omega) \quad {\rm in}\quad L^2(\Omega), \\
        y(\Omega)=z(\Gamma) \quad {\rm on}\quad\Gamma,
    \end{cases}
\end{equation}
then $y'(\Omega;V)\in H^1(\Omega)$ and
\begin{equation}
    \begin{cases}
        -\Delta y'(\Omega;V)=h'(\Omega;V) \quad {\rm in}\quad \mathcal{D} '(\Omega), \\
        y'(\Omega;V)=-\frac{\partial y}{\partial \textbf{n}}(\Omega)V(0)\cdot \textbf{n}+z'(\Gamma;V) \quad {\rm on}\quad\Gamma.
    \end{cases}
\end{equation}
\end{Theorem}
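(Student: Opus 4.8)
The plan is to transport the boundary value problem on the perturbed domain $\Omega_t = T_t(V)(\Omega)$ back to the fixed reference domain $\Omega$, differentiate the resulting variational identity in $t$ to extract the material derivative $\overset{\circ}{y}(\Omega;V)$, and then pass to the shape derivative via the defining relation $y'=\overset{\circ}{y}-\nabla y\cdot V(0)$. First I would write the weak form of $-\Delta y_t = h_t$ in $\Omega_t$ with $y_t = z_t$ on $\Gamma_t$, using test functions in $H_0^1(\Omega_t)$, and pull it back under the change of variables $x = T_t(V)(X)$. Writing $y^t := y_t\circ T_t$, $h^t := h_t\circ T_t$, $\gamma_t := \det DT_t$ and $A_t := \gamma_t (DT_t)^{-1}(DT_t)^{-\top}$, the transported identity becomes
\[
\int_\Omega A_t\,\nabla y^t\cdot\nabla\psi\,dX = \int_\Omega \gamma_t\, h^t\,\psi\,dX,\qquad \forall\,\psi\in H_0^1(\Omega),
\]
subject to the transported Dirichlet datum $y^t = z^t$ on $\Gamma$, where $z^t := z_t\circ T_t$.

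Next I would differentiate this identity at $t=0$. Using $A_0 = I$, $\gamma_0 = 1$, $\dot\gamma = \operatorname{div}V(0)$ and $\dot A = \operatorname{div}V(0)\,I - DV(0) - DV(0)^{\top}$, differentiation produces a variational problem for $\overset{\circ}{y}$. Substituting $\overset{\circ}{y} = y' + \nabla y\cdot V(0)$ on the left and $\overset{\circ}{h} = h' + \nabla h\cdot V(0)$ on the right, and integrating by parts while invoking the reference equation $-\Delta y = h$, the velocity-dependent contributions on the two sides coincide and one is left with $\int_\Omega \nabla y'\cdot\nabla\psi\,dX = \int_\Omega h'\,\psi\,dX$ for all $\psi\in H_0^1(\Omega)$, which is precisely $-\Delta y' = h'$ in $\mathcal{D}'(\Omega)$. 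For the boundary condition, I would take the material derivative of $y^t = z^t$ on $\Gamma$ to obtain $\overset{\circ}{y} = \overset{\circ}{z}$ there, and then use the tangential--normal splitting $\nabla y\cdot V(0) = \frac{\partial y}{\partial\mathbf{n}}\,(V(0)\cdot\mathbf{n}) + \nabla_\Gamma y\cdot V(0)$ together with $\nabla_\Gamma y = \nabla_\Gamma z$ on $\Gamma$ (since $y=z$ there) to reduce $y' = \overset{\circ}{y} - \nabla y\cdot V(0)$ to $y' = z' - \frac{\partial y}{\partial\mathbf{n}}\,(V(0)\cdot\mathbf{n})$, as claimed.

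The main obstacle is not the formal computation but the justification that the material derivative genuinely exists in $H^1(\Omega)$, i.e. that $t\mapsto y^t$ is differentiable as a map into $H^1(\Omega)$. I would establish this by viewing the pulled-back problem as a $t$-parametrized family of uniformly coercive variational problems whose coefficients $A_t$ and $\gamma_t$ depend, say, $C^1$ on $t$ for small $t$, and then apply the implicit function theorem (or a direct difference-quotient estimate combined with the Lax--Milgram bound) to deduce differentiability of the solution and the regularity $\overset{\circ}{y}\in H^1(\Omega)$, hence $y'\in H^1(\Omega)$. The requisite smoothness of $A_t$ and $\gamma_t$ in $t$ follows from the regularity of the flow $T_t(V)$ and of $\partial\Omega$ guaranteed by the hypotheses, and the Dirichlet trace $y^t = z^t$ transfers continuously through this argument to yield the boundary identity above.
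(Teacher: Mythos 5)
Your argument is correct: the paper does not prove this theorem itself but defers to Sokolowski--Zol\'esio (Chapter 3, pp.\ 118--121), and your pull-back of the variational formulation to the reference domain, differentiation of the transported bilinear form, and passage from the material to the shape derivative is essentially the proof given there, including the standard identity that the velocity-dependent terms cancel against $\operatorname{div}(hV(0))$ once $-\Delta y=h$ is used. The only technical point worth noting is that for $h(\Omega)\in L^2(\Omega)$ the expression $\nabla h\cdot V(0)$ must be interpreted distributionally (or one argues by density), but this does not affect the structure of your argument.
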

\begin{Theorem}\label{thm:dl23}
    Let $h(\Omega) \in L^2(\Omega)$, $z(\Gamma) \in H^{1/2}(\Gamma)/\mathbb{R}$,
  which  satisfy the compatibility condition:
    \begin{equation*}
        \int_{\Omega}h(\Omega)dx+\int_{\Gamma}z(\Gamma)d\sigma=0
    \end{equation*}
and 
\begin{equation}
    \begin{cases}
        -\Delta y(\Omega)=h(\Omega) \quad {\rm in}\quad L^2(\Omega) ,\\
        \frac{\partial y(\Omega)}{\partial \textbf{n}}=z(\Gamma) \quad {\rm on}\quad\Gamma,
    \end{cases}
\end{equation}
then $y'(\Omega;V)\in H^1(\Omega)/\mathbb{R} $ and
\begin{equation}
    \begin{cases}
        -\Delta y'(\Omega;V)=h'(\Omega;V) \quad {\rm in}\quad \mathcal{D} '(\Omega) ,\\
        \frac{\partial y'(\Omega;V)}{\partial \textbf{n}}=
        {\rm div}_{\Gamma}(V(0)\cdot  \textbf{n}\nabla_\Gamma y(\Omega))
        +[h(\Omega)+\mathcal{H} z(\Gamma)]V(0)\cdot \textbf{n}+
        z'(\Gamma;V) \quad {\rm on}\quad\Gamma,
    \end{cases}
\end{equation}
where $\mathcal{H}$ is the additive curvature of $\Gamma$, i.e.,  $n-1$ times of the mean curvature of $\Gamma$.
\end{Theorem}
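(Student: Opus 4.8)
The plan is to differentiate the variational formulation of the Neumann problem after transporting everything to the fixed reference domain $\Omega$ through the flow $T_t(V)$, following the standard shape-calculus machinery. First I would record the weak form on the perturbed domain: the transported solution $y^t:=y(\Omega_t)\circ T_t(V)$ satisfies
\[
\int_{\Omega} A(t)\nabla y^t\cdot\nabla\phi\,dx
= \int_{\Omega}\big(h(\Omega_t)\circ T_t\big)\,\gamma_t\,\phi\,dx
+ \int_{\Gamma}\big(z(\Gamma_t)\circ T_t\big)\,\omega_t\,\phi\,d\sigma
\qquad\forall\,\phi\in H^1(\Omega),
\]
where $\gamma_t=\det(DT_t)$, $A(t)=\gamma_t\,(DT_t)^{-1}(DT_t)^{-\top}$, and $\omega_t$ is the tangential surface Jacobian. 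The compatibility condition is exactly what makes each of these problems solvable, and the indeterminate additive constant is what forces the shape derivative to live in $H^1(\Omega)/\mathbb{R}$.

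Next I would differentiate this identity at $t=0$, using $A(0)=I$, $\gamma_0=\omega_0=1$ together with the classical derivative formulas ${\rm div}\,V(0)=\gamma'(0)$, ${\rm div}_\Gamma V(0)=\omega'(0)$ and $A'(0)={\rm div}(V(0))\,I-DV(0)-DV(0)^{\top}$. Writing $\overset{\circ}{y}$ for the material derivative, this yields a weak problem for $\overset{\circ}{y}$ whose data are assembled from $\overset{\circ}{h}$, $\overset{\circ}{z}$ and the Jacobian derivatives. I would then insert the material-to-shape relations $\overset{\circ}{y}=y'+\nabla y\cdot V(0)$, $\overset{\circ}{h}=h'+\nabla h\cdot V(0)$ and $\overset{\circ}{z}=z'+\nabla_\Gamma z\cdot V(0)$ to rewrite the whole identity in terms of the shape derivative $y'(\Omega;V)$.

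Restricting to test functions $\phi\in\mathcal{D}(\Omega)$ and integrating by parts in the interior should then give $-\Delta y'=h'$ in $\mathcal{D}'(\Omega)$: the Jacobian-derivative terms together with the transport contribution $\nabla y\cdot V(0)$ recombine, using the state equation $-\Delta y=h$, so that only $-\Delta y'=h'$ survives in the bulk. The delicate part is the boundary condition, obtained by letting $\phi$ range over all of $H^1(\Omega)$ and collecting the surface contributions coming from $A'(0)\nabla y\cdot\nabla\phi$, from $z\,{\rm div}_\Gamma V(0)$, and from the transport terms.

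I expect this boundary identity to be the main obstacle. The clean expression ${\rm div}_\Gamma(V(0)\cdot\textbf{n}\,\nabla_\Gamma y)+[h+\mathcal{H}z]\,V(0)\cdot\textbf{n}+z'$ should emerge only after (i) splitting $V(0)=(V(0)\cdot\textbf{n})\textbf{n}+V_\Gamma$ into normal and tangential parts, (ii) decomposing $\nabla y=\nabla_\Gamma y+(\partial y/\partial\textbf{n})\textbf{n}$ on $\Gamma$ and using $\partial y/\partial\textbf{n}=z$, and (iii) invoking the tangential Green's formula $\int_\Gamma(\phi\,{\rm div}_\Gamma F+F\cdot\nabla_\Gamma\phi)\,d\sigma=\int_\Gamma\mathcal{H}\,\phi\,(F\cdot\textbf{n})\,d\sigma$ to convert tangential-divergence terms into the additive-curvature factor $\mathcal{H}$. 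This is precisely the computation carried out in \cite[Chapter 3, pp. 118--121]{sokolowski1992introduction}, so I would either reproduce that argument in detail or cite it directly.
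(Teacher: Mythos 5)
Your outline is correct and follows exactly the route the paper relies on: the paper gives no proof of Theorem \ref{thm:dl23} itself but delegates it to \cite[Chapter 3, pp. 118--121]{sokolowski1992introduction}, and your plan (pull back the Neumann variational formulation through $T_t(V)$, differentiate at $t=0$ using $A'(0)={\rm div}(V(0))I-DV(0)-DV(0)^{\top}$, pass from material to shape derivatives, and recover the boundary condition via the normal/tangential splitting and the tangential Green's formula) is precisely the argument of that reference. No gaps worth flagging.
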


\subsection{The optimality theorem for shape optimization problems}
In this subsection we present the optimality theorem for shape optimization problems. For simplicity, we only consider problems defined in the star-shaped domains (cf. \cite[Chapter 4, pp. 177-178]{delfour2011shapes} and \cite{eppler2010tracking1}).

Let $\mathbb{S} ^{n-1}$ be the unit sphere in $n$-dimensional Euclid space and  
\begin{equation}
\Omega=\left\{
x=\rho \widehat{x} \in \mathbb{R} ^{n}:\quad \widehat{x}\in 
\mathbb{S} ^{n-1},\quad L(\widehat{x} )\leq \rho \leq U(\widehat{x} )  \right\} .
\nonumber
\end{equation}
Herein, $L \in X$ is the parametrization of the fixed boundary and $U \in X$ is the parametrization of the free boundary. In order to ensure the smoothness of the boundary, we take $X=C^{3,\alpha}( \mathbb{S} ^{n-1})$. Unless otherwise specified, $X$ in the following text has the same meaning.
It can be seen that the free boundary of the domain $\Omega$ is $\Gamma=\{U(\widehat{x})\widehat{x}:\widehat{x}\in  \mathbb{S}^{n-1}\}$. Since the inner boundary $\Sigma=\{L(\widehat{x})\widehat{x}:\widehat{x}\in  \mathbb{S}^{n-1}\}$ is fixed, the domain $\Omega$ corresponds one-to-one with the free boundary $\Gamma$, and thus the domain $\Omega$ corresponds one-to-one with the function $U$. Therefore, we can directly use $U$ to refer to the region $\Omega$. We can also directly represent $J(\Omega)$ as $J(U)$.

We consider the transformed domain given by:
\begin{equation}
    \Omega_{t}=\left\{
    x=\rho \widehat{x} \in \mathbb{R} ^{n}:\quad \widehat{x}\in 
    \mathbb{S} ^{n-1},\quad L(\widehat{x} )\leq \rho \leq U(\widehat{x} ) 
    +tdr(\widehat{x} ) \right\} ,
    \nonumber
\end{equation}
where $dr\in X$,
then the corresponding point of $x=\rho \widehat{x} \in \Omega$ in $\Omega_t$ is given by
\begin{equation}
    T_t(x )=x
    +t\left[ \frac{\rho-L(\widehat{x} )}{U(\widehat{x} )
    -L(\widehat{x} )}\right] dr(\widehat{x} )\widehat{x}.
    \nonumber
\end{equation}
Therefore, according to the relationship between the velocity field and the transformation \cite[Chapter 4, pp. 181-183]{delfour2011shapes}, we can obtain $V(t,x)=\frac{\partial T_t(T^{-1}_t(x))}{\partial t}$, and particularly $V(0,x)|_{\Gamma}=dr(\widehat{x})\widehat{x}$. We call $V$ the velocity field induced by $dr$, and directly denote $V$ as $dr$. Correspondingly, we can also denote $dJ(\Omega)[V]$ as $dJ(U)[dr]$.

For the shape optimization problem
\begin{equation}
    \inf_{r \in X} J(r),
\end{equation}
we have the following theorem:
\begin{Theorem}
{\rm{(\cite[pp. 9-11]{eppler2007convergence})}}
Let
\begin{itemize}
    \item[(A1)] $dJ(r^*)[dr]=0$, $\forall dr \in X$ holds for a certain $r^*\in X$;
    
\item[(A2)] there exists $\delta>0$, such that
\begin{equation*}
    \left\lvert d^2J(r)[h_1,h_2]\right\rvert \leq c_S(r)\left\lVert h_1\right\rVert _{H^s}\left\lVert h_2\right\rVert _{H^s}
    \quad \forall h_1,h_2 \in H^s
\end{equation*}
holds for all $r \in \overline{B_{\delta}^X(r^*)} $.
Herein, $H^s$ is a Sobolev space, and $X\subset H^s$.
\end{itemize}

Then the domain $r^*$ is a strong regular local optimum of second order, i.e., 
\begin{equation*}
    J(r)-J(r^*)\geq \frac{c_E}{4}\left\lVert r-r^*\right\rVert ^2_{H^s}
    \quad \forall r \in B_{\delta}^X(r^*)
\end{equation*}
if and only if the following two conditions hold:
\begin{itemize}
    \item[(A3)] the shape Hessian is strongly coercive at $r^*$:
\begin{equation*}
    d^2J(r^*)[h,h]\geq c_E\left\lVert h\right\rVert _{H^s}^2
   \quad \forall h \in H^s.
\end{equation*}

\item[(A4)] the following estimate
\begin{equation*}
    \left\lvert 
        d^2J(r)[h_1,h_2]
        -d^2J(r^*)[h_1,h_2]
    \right\rvert 
    \leq 
    \eta (\left\lVert r-r^*\right\rVert _{X} )
    \left\lVert h_1\right\rVert _{H^s}
    \left\lVert h_2\right\rVert _{H^s}
    \quad \forall h_1,h_2 \in H^s
\end{equation*}
holds for all $r \in B_{\delta}^X(r^*)$,
where $\eta$ satisfies $\lim_{t \to 0^+}\eta(t)=0.$
\end{itemize}

\label{thm:dl24}
\end{Theorem}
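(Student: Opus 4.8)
The plan is to analyze $J$ near $r^*$ through a one-dimensional second-order Taylor expansion along the segment joining $r^*$ to $r$. Writing $h := r - r^*$ and $\phi(t) := J(r^* + t h)$, the hypotheses (A2) and (A4) are what guarantee that $\phi \in C^2([0,1])$ with $\phi'(t) = dJ(r^* + th)[h]$ and $\phi''(t) = d^2 J(r^* + th)[h,h]$; here one uses that $X$ is a vector space, so the whole segment $r^* + th$ remains in $\overline{B_\delta^X(r^*)}$ whenever $r \in B_\delta^X(r^*)$. The first-order condition (A1) annihilates the linear term, $\phi'(0) = dJ(r^*)[h] = 0$, so the Lagrange form of Taylor's theorem gives, for some $\theta \in (0,1)$ and $\xi := r^* + \theta h$,
\begin{equation*}
J(r) - J(r^*) = \tfrac{1}{2}\, d^2 J(\xi)[h,h], \qquad \|\xi - r^*\|_X \le \|r - r^*\|_X < \delta .
\end{equation*}
This identity is the engine for both implications.

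For the sufficiency direction, (A3) and (A4) $\Rightarrow$ growth, I would split the Hessian at the intermediate domain $\xi$ as $d^2J(\xi)[h,h] = d^2J(r^*)[h,h] + \bigl(d^2J(\xi)[h,h] - d^2J(r^*)[h,h]\bigr)$, bound the first term below by the coercivity (A3), and bound the second in modulus by the continuity estimate (A4) evaluated at $\|\xi - r^*\|_X$. Since $\lim_{t\to 0^+}\eta(t) = 0$, after shrinking $\delta$ so that $\eta(\delta) \le c_E/2$ one obtains $d^2 J(\xi)[h,h] \ge \tfrac{c_E}{2}\|h\|_{H^s}^2$, and substituting into the Taylor identity yields the quadratic growth $J(r) - J(r^*) \ge \tfrac{c_E}{4}\|r - r^*\|_{H^s}^2$.

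For the necessity direction, growth $\Rightarrow$ (A3), I would fix an admissible direction $h \in X$, set $r = r^* + th$ for small $t > 0$, and divide the growth inequality by $t^2$. The Taylor identity together with the continuity (A4) shows that the difference quotient $2\,[J(r^* + th) - J(r^*)]/t^2 = d^2J(\xi_t)[h,h]$ converges to $d^2J(r^*)[h,h]$ as $t \to 0^+$, since $\|\xi_t - r^*\|_X \le t\|h\|_X \to 0$. Passing to the limit in the growth inequality gives $d^2 J(r^*)[h,h] \ge \tfrac{c_E}{2}\|h\|_{H^s}^2$ for every $h \in X$; the precise constants in the two conditions differ by a fixed factor, the substance of the equivalence being that a positive quadratic growth rate holds if and only if the shape Hessian at $r^*$ is strictly positive definite on $H^s$. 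Coercivity is then upgraded from the admissible directions $X$ to all of $H^s$ by density of $X$ in $H^s$ together with the continuity of the bilinear form $d^2J(r^*)[\cdot,\cdot]$ on $H^s \times H^s$ supplied by (A2).

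The main obstacle is not the algebra above but the rigorous justification of the $C^2$-regularity of $\phi$ and of the Lagrange remainder in the shape-calculus setting: the shape Hessian is defined as an iterated limit of difference quotients along velocity fields, and one must verify that this object coincides with $\phi''$ and that $\phi''$ is continuous in $t$, which is exactly what the uniform boundedness (A2) and the modulus-of-continuity estimate (A4) are designed to furnish. A secondary technical point is the density and extension step, where one must check that $X = C^{3,\alpha}(\mathbb{S}^{n-1})$ is dense in $H^s$, so that the pointwise coercivity established on $X$ propagates by continuity to the full space $H^s$ on which the growth estimate is asserted.
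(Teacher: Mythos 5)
Your sufficiency argument is essentially identical to the paper's: a second-order Taylor expansion with Lagrange remainder at an intermediate point $\xi$ on the segment, the splitting $d^2J(\xi)[h,h]=d^2J(r^*)[h,h]+\bigl(d^2J(\xi)[h,h]-d^2J(r^*)[h,h]\bigr)$, coercivity (A3) on the first term, the modulus-of-continuity bound (A4) on the second, and $\lim_{t\to 0^+}\eta(t)=0$ to absorb the perturbation. The one substantive difference is scope: the paper explicitly proves only the sufficiency direction and defers the full equivalence to the cited reference, whereas you also sketch the necessity direction by setting $r=r^*+th$, dividing the growth inequality by $t^2$, and passing to the limit via (A4), then upgrading from $X$ to $H^s$ by density and the continuity supplied by (A2). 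That sketch is sound; the only caveat, which you already flag, is that the constant you recover in (A3) is $c_E/2$ rather than $c_E$, so the equivalence holds up to a fixed multiplicative adjustment of the coercivity constant --- consistent with how the statement is meant to be read. Your closing remarks on the need to justify the $C^2$-regularity of $t\mapsto J(r^*+th)$ in the shape-calculus setting and the density of $C^{3,\alpha}(\mathbb{S}^{n-1})$ in $H^s$ identify genuine technical points that the paper's two-line proof passes over silently.
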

\begin{proof}
    Here we only prove sufficiency. In fact, by Taylor's expansion, we have
   \begin{equation*}
    J(r)-J(r^*)=0+\frac{1}{2}d^2J(r^*+\xi h)[h,h]\quad \xi \in (0,1)
\end{equation*}
    holds for all $r=r^*+h\in B_{\delta}^X(r^*)$.
    Therefore, by conditions $(A3)$ and $(A4)$, we can obtain
    \begin{equation*}
    \begin{split}
        J(r)-J(r^*)
        &=\frac{1}{2}d^2J(r^*)[h,h]+\frac{1}{2}d^2J(r^*+\xi h)[h,h]-\frac{1}{2}d^2J(r^*)[h,h]\\
        &\geq \frac{c_E}{2}\left\lVert h\right\rVert _{H^s}^2
        -\frac{1}{2}\eta (\left\lVert \xi h \right\rVert )\left\lVert h\right\rVert _{H^s}^2\\
        &=\left(\frac{c_E}{2}-\frac{1}{2}\eta (\left\lVert \xi h \right\rVert )\right) \left\lVert r-r^*\right\rVert _{H^s}^2.
    \end{split}
\end{equation*}
Thus, we arrive at the conclusion in view of $\lim_{t \to 0^+}\eta(t)=0$.
\end{proof}

\begin{Remark}
A few remarks are in order:
\begin{itemize}
    \item[(1)] When proving sufficiency of the above theorem, condition $(A3)$ can be weakened to
    \begin{equation*}
     (A3')\quad d^2J(r^*)[h,h]\geq c_E\left\lVert h\right\rVert _{H^s}^2
   \quad \forall h \in X.
   \end{equation*}

\item[(2)] The condition $(A2)$ holds for
$ h_1,h_2 \in H^s$ in the dense sense, which can be referred to \cite[pp. 4-5]{eppler2007convergence}. For boundary-type objective functionals, it holds for $s=1$, we refer to \cite{eppler2012kohn,eppler2000optimal} and \cite[p. 5]{eppler2007convergence} for the details.

\item[(3)] The condition $(A4)$ ensures that the second-order Taylor remainder $R_2(J(\Omega),dr)$ satisfies
 \begin{equation*}
        \left\lvert R_2(J(\Omega),dr) \right\rvert 
        =o(\left\lVert dr\right\rVert _{X})
        \left\lVert dr\right\rVert _{H^s}^2.
    \end{equation*}
    For the boundary-type objective functional studied in this paper, condition $(A4)$ holds for $s=1$, see \cite{dambrine2000stability,dambrine2002variations} for details.
\end{itemize}
\end{Remark}

Condition $(A3')$ plays a crucial role for the existence of solutions and convergence of the nonlinear Ritz-Galerkin approximation method. We recall the following theorem from \cite[Part II, pp. 283-284]{leugering2012constrained}.
\begin{Theorem}
    Assume that the Euler derivative satisfies condition $(A1)$ and the shape Hessian satisfies conditions $(A2)$, $(A3')$ at $r^*$, then there exists a neighborhood $U(r^*)\subset X$ of  $r^*$ such that the discrete problem
    \begin{equation}
    seek \quad r^*_N \in V_N\quad s.t. \quad dJ(r_N^*)[dr]=0\quad\forall dr \in V_N
    \label{con:1j}
\end{equation}
has a unique solution $r^*_N \in V_N \cap U(r^*)$, and we have an estimate
\begin{equation}
    \left\lVert r^*_N-r^* \right\rVert _{H^s}
    \lesssim 
    \underset{r_N \in V_N}{\inf}\left\lVert r_N-r^*\right\rVert _{H^s},
\end{equation}
where $V_N={\rm span} \{\varphi _1,\varphi _2,...,\varphi _N\}\subset X$ with $N$ sufficiently large.
\label{thm:DL214}
\end{Theorem}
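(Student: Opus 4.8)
The plan is to recast the first-order optimality system as a nonlinear operator equation and then to apply the standard approximation theory for nonsingular solutions of nonlinear problems, combining a fixed-point argument for existence and uniqueness of the discrete solution with a C\'ea-type estimate for quasi-optimality. Concretely, I would introduce the residual operator $G:\overline{B_\delta^X(r^*)}\to (H^s)^*$ defined by $\langle G(r),v\rangle:=dJ(r)[v]$, so that the infinite-dimensional optimality condition $(A1)$ reads $G(r^*)=0$, while the discrete problem \eqref{con:1j} is precisely the Galerkin equation $\langle G(r_N^*),v_N\rangle=0$ for all $v_N\in V_N$. Its Fr\'echet derivative is the shape Hessian, $\langle DG(r)h,v\rangle=d^2J(r)[h,v]$, which by $(A2)$ is a bounded bilinear form on $H^s\times H^s$ for every $r$ in the ball.

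First I would establish the invertibility of the linearization. By $(A3')$ the form $d^2J(r^*)[\cdot,\cdot]$ is coercive on $X$, hence, by density and $(A2)$, on all of $H^s$; Lax--Milgram then shows that $DG(r^*):H^s\to(H^s)^*$ is an isomorphism with $\|DG(r^*)^{-1}\|\le 1/c_E$. Since $V_N\subset X\subset H^s$, the restriction of $d^2J(r^*)$ to $V_N\times V_N$ inherits the same coercivity constant $c_E$; this gives a \emph{discrete stability} bound that is uniform in $N$ and is the crucial structural advantage of the conforming Galerkin setting.

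Next I would prove existence and uniqueness of $r_N^*$ by a contraction argument. Writing $\tilde r_N\in V_N$ for the $H^s$-best approximation of $r^*$, I would define the fixed-point map $\Phi_N(w_N):=w_N-(A_N^*)^{-1}P_NG(w_N)$ on a small $H^s$-ball about $\tilde r_N$, where $A_N^*$ is the invertible restriction of $d^2J(r^*)$ to $V_N$ and $P_N$ denotes testing against $V_N$. Using $G(r^*)=0$, the identity $G(w_N)-G(r^*)=\int_0^1 DG(r^*+t(w_N-r^*))(w_N-r^*)\,dt$, and the continuity of the Hessian in its base point (the modulus-of-continuity estimate $(A4)$, which holds for the boundary-type functional of this paper with $s=1$), I would show that $\Phi_N$ maps this ball into itself and is a contraction for $\delta$ small. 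Banach's fixed-point theorem then yields a unique fixed point $r_N^*\in V_N\cap U(r^*)$, which is exactly the unique discrete stationary point near $r^*$.

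Finally, the error estimate follows by subtracting the two optimality conditions: for every $v_N\in V_N$,
\begin{equation*}
0=\langle G(r_N^*)-G(r^*),v_N\rangle=\int_0^1 d^2J\big(r^*+t(r_N^*-r^*)\big)[\,r_N^*-r^*,\,v_N\,]\,dt.
\end{equation*}
Splitting $r_N^*-r^*=(r_N^*-\tilde r_N)+(\tilde r_N-r^*)$ and testing with $v_N=r_N^*-\tilde r_N$, I would use the segment-coercivity --- coercivity at $r^*$ transported along the segment joining $r^*$ and $r_N^*$ via $(A4)$, legitimate because $r_N^*$ is $X$-close to $r^*$ --- on the diagonal part and the boundedness $(A2)$ on the remainder, obtaining $\|r_N^*-\tilde r_N\|_{H^s}\lesssim\|\tilde r_N-r^*\|_{H^s}$; the triangle inequality then gives the claimed quasi-optimal bound. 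The step I expect to be the main obstacle is exactly this transport of coercivity: one must pass from $d^2J(r^*)$ to the Hessian evaluated on the whole segment between $r^*$ and $r_N^*$, which requires controlling the base-point dependence of the Hessian and, more delicately, reconciling the two topologies at play --- regularity and closeness are measured in the strong norm $X$ in which $(A2)$ and $(A4)$ are posed, whereas coercivity and the final estimate live in the weaker norm $H^s$ of the Gelfand triple $X\subset H^s\subset(H^s)^*$, and on the finite-dimensional space $V_N$ these two norms are equivalent only with $N$-dependent constants. Condition $(A4)$ is precisely what bridges these scales, so the whole argument hinges on its availability.
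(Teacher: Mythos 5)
The paper does not actually prove this theorem: it is quoted verbatim from the reference on shape optimization for free boundary problems and used as a black box, so there is no internal proof to compare against. Your argument is the standard Galerkin theory for nonsingular solutions of nonlinear operator equations (linearize, establish an isomorphism via Lax--Milgram from $(A2)$ and the densely-extended coercivity $(A3')$, inherit discrete stability by conformity, run a Banach fixed-point iteration about the best approximation, and finish with a C\'ea-type estimate from Galerkin orthogonality of the integrated Hessian), which is indeed the scheme underlying the cited result. The outline is sound and the quasi-optimality step is correctly set up.

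There are, however, two points where the argument as written is not closed. First, you invoke $(A4)$ both for the contraction property of $\Phi_N$ and for transporting coercivity along the segment from $r^*$ to $r_N^*$, but $(A4)$ is not among the stated hypotheses of the theorem (only $(A1)$, $(A2)$, $(A3')$ are assumed). Some base-point continuity of the Hessian is genuinely indispensable here --- uniform boundedness $(A2)$ alone does not make $\Phi_N$ a contraction --- so you should either enlarge the hypothesis list or cite the paper's remark that $(A4)$ holds with $s=1$ for the boundary-type functionals at hand; as stated, your proof proves a slightly different theorem. Second, and more seriously, the two-norm issue you flag at the end is a genuine gap, not merely an anticipated difficulty: $(A4)$ measures closeness in $\left\lVert \cdot\right\rVert_X$, while your fixed-point ball and your error estimate live in $H^s$, and on $V_N$ these norms are equivalent only with $N$-dependent constants. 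To legitimately apply $(A4)$ on the segment you must first show $\left\lVert r_N^*-r^*\right\rVert_X$ is small, which requires (i) the implicit assumption that $\inf_{r_N\in V_N}\left\lVert r_N-r^*\right\rVert_X\to 0$ (this is what ``$N$ sufficiently large'' is hiding) and (ii) control of the discrete correction $r_N^*-\tilde r_N$ in the $X$-norm, e.g.\ by running the fixed-point iteration in an $X$-ball or by an inverse estimate whose constant is absorbed by the approximation rate. Naming the obstacle does not discharge it; without step (ii) the contraction and the segment coercivity are both unjustified, so the proof is incomplete at precisely the point you identify.
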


\begin{Remark}
Since $V_N$ is a linear space and the shape Hessian is continuous, the first-order necessary condition (\ref{con:1j}) will indeed ensure that $r_N^*$ is a local optimal solution, along the same line with the sufficiency proof of Theorem \ref{thm:dl24}.
\end{Remark}

\section{Shape calculus}
In this section we calculate the Euler derivative and shape Hessian for the newly proposed shape optimization problem (\ref{con:jnew})-(\ref{con:equ1d6}). The motivations and discussions on this objective functional will be postponed to Section 4. 

\subsection{Several lemmas}
In this subsection we first collect some necessary materials for performing the shape calculus. For the vector filed $V$ we will use the abbreviation $v_n:=\left\langle V(0),\textbf{n}\right\rangle$ with $\textbf{n}$ being the unit outer normal vector of $\Gamma$.
\begin{Lemma}\label{thm:yinli3}
Let $y(\Omega)\in H^1(\Omega)$ be a function defined in the domain $\Omega$, $z(\Gamma)$ be a function defined on the free boundary $\Gamma$, $V$ be a velocity field, and let the boundary-type shape derivative $z'(\Gamma;V)$ of $z(\Gamma)$ and the domain-type shape derivative $y'(\Omega;V)$ of $y(\Omega)$ both exist. If $z(\Gamma)=y(\Omega)|_{\Gamma}$, then the following identity holds: \begin{equation} 
z’(\Gamma;V)=y’(\Omega;V) +\frac{\partial y(\Omega)}{\partial \textbf{n}}v_n\quad {\rm on} \quad \Gamma.
\end{equation} 
\end{Lemma}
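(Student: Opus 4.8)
The plan is to reduce the identity to the two definitions of the shape derivative and to exploit that the restriction operator to the boundary commutes with the pullback by $T_t(V)$. First I would observe that the velocity method transports the moving boundary \emph{exactly} onto $\Gamma_t$, i.e. $T_t(V)(\Gamma)=\Gamma_t$, so the hypothesis $z(\Gamma)=y(\Omega)|_\Gamma$ propagates to the perturbed configuration as $z(\Gamma_t)=y(\Omega_t)|_{\Gamma_t}$. Composing with $T_t(V)$ and restricting to $\Gamma$ then gives $z(\Gamma_t)\circ T_t(V)=\big(y(\Omega_t)\circ T_t(V)\big)|_\Gamma$, and passing to the limit $t\to 0^+$ in the defining difference quotients shows that the two material derivatives agree on the boundary:
\begin{equation*}
\overset{\circ}{z}(\Gamma;V)=\overset{\circ}{y}(\Omega;V)|_\Gamma.
\end{equation*}
This is the only place where the specific geometry of the velocity method actually enters.

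Next I would subtract the two shape-derivative definitions. By the definition of the shape derivative we have $z'(\Gamma;V)=\overset{\circ}{z}(\Gamma;V)-\nabla_\Gamma z(\Gamma)\cdot V(0)$ and $y'(\Omega;V)=\overset{\circ}{y}(\Omega;V)-\nabla y(\Omega)\cdot V(0)$. Restricting the latter to $\Gamma$ and inserting the material-derivative identity above, the two material terms cancel and one is left with
\begin{equation*}
z'(\Gamma;V)-y'(\Omega;V)|_\Gamma=\big(\nabla y(\Omega)\cdot V(0)\big)|_\Gamma-\nabla_\Gamma z(\Gamma)\cdot V(0).
\end{equation*}
It then remains only to compare the full bulk gradient against the tangential gradient on $\Gamma$.

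The final step is the tangential--normal splitting $\nabla y=\nabla_\Gamma y+\frac{\partial y}{\partial \textbf{n}}\textbf{n}$, valid on $\Gamma$. Because $z=y|_\Gamma$, the tangential gradient of the trace coincides with the tangential component of the bulk gradient, $\nabla_\Gamma z=\nabla_\Gamma y$ on $\Gamma$, so the tangential contributions cancel and only the normal part survives:
\begin{equation*}
\big(\nabla y\cdot V(0)\big)|_\Gamma-\nabla_\Gamma z\cdot V(0)=\frac{\partial y(\Omega)}{\partial \textbf{n}}\,\langle V(0),\textbf{n}\rangle=\frac{\partial y(\Omega)}{\partial \textbf{n}}\,v_n,
\end{equation*}
which is precisely the claimed identity after moving $y'(\Omega;V)$ to the right-hand side.

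I expect the main obstacle to be the rigorous justification of the material-derivative identity $\overset{\circ}{z}(\Gamma;V)=\overset{\circ}{y}(\Omega;V)|_\Gamma$: one must verify that the boundary restriction may be interchanged with the limit defining the material derivative (a continuity/trace issue requiring the assumed existence of both material derivatives in the appropriate $W^{s,p}$-spaces), and that $z(\Gamma_t)=y(\Omega_t)|_{\Gamma_t}$ genuinely holds for all small $t$, not merely at $t=0$. By contrast, the tangential/normal decomposition and the equality $\nabla_\Gamma z=\nabla_\Gamma y$ are standard facts of tangential calculus on $C^k$ hypersurfaces and should need only a routine verification.
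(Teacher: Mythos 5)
Your proof is correct and follows essentially the same route as the paper: equate the material derivatives $\overset{\circ}{z}(\Gamma;V)=\overset{\circ}{y}(\Omega;V)|_\Gamma$ (which the paper asserts implicitly from $z=y|_\Gamma$), substitute the definitions of the two shape derivatives, and use the tangential--normal splitting of $\nabla y$ together with $\nabla_\Gamma z=\nabla_\Gamma y$ to isolate $\frac{\partial y}{\partial \textbf{n}}v_n$. The only difference is that you spell out the justification of the material-derivative identity via $z(\Gamma_t)=y(\Omega_t)|_{\Gamma_t}$, which the paper leaves tacit.
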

\begin{proof}
It follows from the definitions of the material and shape derivatives that on $\Gamma$ it holds 
\begin{equation*}
\begin{split}
    z'(\Gamma;V)
    &=\overset{\circ}{z} (\Gamma;V)-\nabla _{\Gamma}z(\Gamma)\cdot V(0)\\
    &=\overset{\circ}{y} (\Omega;V)-\nabla _{\Gamma}y(\Omega)\cdot V(0)\\
    &=y'(\Omega;V)+\nabla y(\Omega) \cdot V(0)-\nabla _{\Gamma}y(\Omega)\cdot V(0)\\
    &=y'(\Omega;V)+\frac{\partial y(\Omega)}{\partial \textbf{n}}v_n.
\end{split}
\end{equation*}
This gives the result.
\end{proof}

\begin{Lemma}{\rm{(\cite[Chapter 3, pp. 125-126]{sokolowski1992introduction}, \cite[Chapter 9, pp. 488-491]{delfour2011shapes})}}\label{thm:yinli4}
    Let $\textbf{n}$ be the unit outer normal vector of the free boundary and let $N=\nabla b$ be its extension in $\Omega$. Then the following identities hold:
    \begin{gather*}
        \overset{\circ}{\textbf{n}} (\Gamma;V)=\overset{\circ}{N} (\Omega;V)=(DV\textbf{n}\cdot \textbf{n})\textbf{n}- {}^*DV\textbf{n}, \quad {\rm on} \quad \Gamma, \\
        N'(\Omega;V)=(DV\textbf{n}\cdot \textbf{n})\textbf{n}- {}^*DV\textbf{n}-\nabla^2bV ,\nonumber
    \end{gather*}
    where $b(x)=d_\Omega(x)-d_{\Omega^c}(x)$, $d$ is the distance function.
We often denote $\overset{\circ}{\textbf{n}}(\Gamma;V)$ as
    $d\textbf{n}[V]$.    
\end{Lemma}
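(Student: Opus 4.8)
The plan is to obtain both identities from the transformation law of the unit normal under the flow $T_t=T_t(V)$, combined with the structure of the oriented distance function. First I would record the standard facts about $b(x)=d_\Omega(x)-d_{\Omega^c}(x)$: in a tubular neighbourhood of $\Gamma$ one has $|\nabla b|=1$, the field $N=\nabla b$ restricts to the unit outer normal $\textbf{n}$ on $\Gamma$, and $DN=\nabla^2 b$ is symmetric. These facts account for the only discrepancy between the two asserted formulas: by the domain-type definition of the shape derivative, $N'(\Omega;V)=\overset{\circ}{N}(\Omega;V)-DN\,V(0)=\overset{\circ}{N}(\Omega;V)-\nabla^2 b\,V(0)$, so once the material derivative $\overset{\circ}{N}(\Omega;V)$ is known on $\Gamma$, the expression for $N'(\Omega;V)$ follows immediately.

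Next I would prove the first equality $\overset{\circ}{\textbf{n}}(\Gamma;V)=\overset{\circ}{N}(\Omega;V)$ on $\Gamma$ by comparing difference quotients. Let $b_t$ denote the oriented distance to $\Omega_t$, $N_t=\nabla b_t$, and $\textbf{n}_t$ the unit normal of $\Gamma_t$. For $x\in\Gamma$ we have $T_t(x)\in\Gamma_t$, whence $N_t(T_t(x))=\nabla b_t(T_t(x))=\textbf{n}_t(T_t(x))$; since also $N|_\Gamma=\textbf{n}$, the domain-type quotient $(N_t\circ T_t-N)/t$ agrees on $\Gamma$ with the boundary-type quotient $(\textbf{n}_t\circ T_t-\textbf{n})/t$. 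Passing to the limit gives $\overset{\circ}{N}(\Omega;V)=\overset{\circ}{\textbf{n}}(\Gamma;V)$ on $\Gamma$.

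It then remains to compute this common value. Here I would invoke Nanson's formula for the transported unit normal,
\[
\textbf{n}_t\circ T_t=\frac{{}^*DT_t^{-1}\,\textbf{n}}{\lvert {}^*DT_t^{-1}\,\textbf{n}\rvert},
\]
where ${}^*DT_t^{-1}$ is the inverse transpose of the Jacobian $DT_t$, and differentiate at $t=0$. From $\partial_t T_t=V(t,T_t)$ one has $DT_0=I$ and $\tfrac{d}{dt}\big|_{t=0}{}^*DT_t^{-1}=-{}^*DV$, so setting $M(t):={}^*DT_t^{-1}\,\textbf{n}$ gives $M(0)=\textbf{n}$ and $M'(0)=-{}^*DV\,\textbf{n}$. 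Differentiating the normalised vector $M/\lvert M\rvert$ and using $\lvert M(0)\rvert=1$ yields
\[
\overset{\circ}{\textbf{n}}(\Gamma;V)=M'(0)-\bigl(M(0)\cdot M'(0)\bigr)M(0)=-{}^*DV\,\textbf{n}+\bigl({}^*DV\,\textbf{n}\cdot\textbf{n}\bigr)\textbf{n}.
\]
Since $\textbf{n}^{\!\top}\,{}^*DV\,\textbf{n}=\textbf{n}^{\!\top}DV\,\textbf{n}$, i.e. ${}^*DV\,\textbf{n}\cdot\textbf{n}=DV\,\textbf{n}\cdot\textbf{n}$, this is precisely $(DV\,\textbf{n}\cdot\textbf{n})\textbf{n}-{}^*DV\,\textbf{n}$, and subtracting $\nabla^2 b\,V(0)$ as above produces the stated formula for $N'(\Omega;V)$.

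The main obstacle I anticipate is analytic rather than algebraic: one must justify that $t\mapsto\textbf{n}_t\circ T_t$ is differentiable at $t=0$ with the derivative computed above, which rests on the differentiability of $t\mapsto b_t$ near $\Gamma$ and hence on the $C^k$-regularity of the boundary and the smoothness of the flow generated by $V$. This is exactly the technical content established in the cited references, so in the write-up I would invoke the regularity of the oriented distance function near a $C^k$ boundary and Nanson's formula directly, concentrating the actual computation on the differentiation of the normalisation.
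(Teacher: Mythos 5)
The paper gives no proof of this lemma; it is imported by citation from Sokolowski--Zol\'esio and Delfour--Zol\'esio. Your argument is correct and is essentially the standard derivation from those references: the identity $\overset{\circ}{N}=\overset{\circ}{\textbf{n}}$ on $\Gamma$ via $\nabla b_t|_{\Gamma_t}=\textbf{n}_t$, the transformation law for the normal (which the paper itself records later as Lemma \ref{thm:yinli314}), differentiation of the normalization using $DT_0=I$ and $\tfrac{d}{dt}\big|_{t=0}{}^*DT_t^{-1}=-{}^*DV$, the observation ${}^*DV\textbf{n}\cdot\textbf{n}=DV\textbf{n}\cdot\textbf{n}$, and finally $N'=\overset{\circ}{N}-\nabla^2b\,V(0)$ from the definition of the domain-type shape derivative with $DN=\nabla^2b$. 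Your closing caveat about where the real analytic work lies (differentiability of $t\mapsto b_t$ near a $C^k$ boundary) is also the right one.
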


\begin{Lemma}
   Let $y(\Omega)\in H^2(\Omega)$, and let $z(\Gamma)=\frac{\partial y(\Omega)}{\partial \textbf{n}}$. Then the following identity holds:
   \begin{equation}
    z'(\Gamma;V)=\frac{\partial dy}{\partial n}+
    \nabla y\cdot (d\textbf{n}-\nabla^2bV)+\frac{\partial ^2y}{\partial \textbf{n}^2}v_n\quad \rm on \quad \Gamma,
   \end{equation}  
   where $dy=y'(\Omega;V)$, $\frac{\partial ^2y}{\partial \textbf{n}^2}=\nabla^2y\textbf{n} \cdot \textbf{n}$, $d\textbf{n}=\overset{\circ}{\textbf{n}} (\Gamma;V)$.
   \label{thm:yinli5}
\end{Lemma}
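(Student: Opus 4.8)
The plan is to reduce the boundary-type shape derivative of $z(\Gamma)=\partial y/\partial\textbf{n}$ to a domain-type computation via Lemma \ref{thm:yinli3}. The key observation is that, although $\textbf{n}$ lives only on $\Gamma$, we may use its extension $N=\nabla b$ from Lemma \ref{thm:yinli4} and introduce the domain function $w(\Omega):=\nabla y\cdot N=\nabla y\cdot\nabla b$, which belongs to $H^1(\Omega)$ since $y\in H^2(\Omega)$. Because $N=\textbf{n}$ on $\Gamma$, we have $w(\Omega)|_\Gamma=\nabla y\cdot\textbf{n}=\partial y/\partial\textbf{n}=z(\Gamma)$, so Lemma \ref{thm:yinli3} applies and gives
\begin{equation*}
z'(\Gamma;V)=w'(\Omega;V)+\frac{\partial w}{\partial\textbf{n}}v_n\quad\text{on }\Gamma.
\end{equation*}
It then remains to identify the two terms on the right-hand side.

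First I would compute the domain-type shape derivative $w'(\Omega;V)$ by the product rule, treating both factors as shape dependent:
\begin{equation*}
w'(\Omega;V)=(\nabla y)'(\Omega;V)\cdot N+\nabla y\cdot N'(\Omega;V).
\end{equation*}
Using the standard fact that the shape derivative commutes with the spatial gradient, $(\nabla y)'(\Omega;V)=\nabla\big(y'(\Omega;V)\big)=\nabla dy$, and inserting the expression for $N'(\Omega;V)$ from Lemma \ref{thm:yinli4}, the restriction to $\Gamma$ (where $N=\textbf{n}$ and $\overset{\circ}{\textbf{n}}(\Gamma;V)=(DV\textbf{n}\cdot\textbf{n})\textbf{n}-{}^*DV\textbf{n}=d\textbf{n}$, so that $N'(\Omega;V)=d\textbf{n}-\nabla^2 b\,V$ there) yields
\begin{equation*}
w'(\Omega;V)=\nabla dy\cdot\textbf{n}+\nabla y\cdot(d\textbf{n}-\nabla^2 b\,V)=\frac{\partial dy}{\partial n}+\nabla y\cdot(d\textbf{n}-\nabla^2 b\,V)\quad\text{on }\Gamma.
\end{equation*}

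Next I would evaluate $\partial w/\partial\textbf{n}=\nabla w\cdot\textbf{n}$. Expanding $\nabla w=\nabla(\nabla y\cdot\nabla b)=\nabla^2 y\,\nabla b+\nabla^2 b\,\nabla y$ and restricting to $\Gamma$, where $\nabla b=\textbf{n}$, gives
\begin{equation*}
\frac{\partial w}{\partial\textbf{n}}=(\nabla^2 y\,\textbf{n})\cdot\textbf{n}+(\nabla^2 b\,\nabla y)\cdot\textbf{n}=\frac{\partial^2 y}{\partial\textbf{n}^2}+\nabla y\cdot(\nabla^2 b\,\textbf{n}),
\end{equation*}
using the symmetry of $\nabla^2 b$ in the last term. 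The second term vanishes because $b$ is the signed distance function, so $|\nabla b|\equiv 1$ in a neighborhood of $\Gamma$; differentiating $|\nabla b|^2=1$ gives $\nabla^2 b\,\nabla b=0$, hence $\nabla^2 b\,\textbf{n}=0$ on $\Gamma$. Thus $\partial w/\partial\textbf{n}=\partial^2 y/\partial\textbf{n}^2$, and substituting the two computed terms into the formula for $z'(\Gamma;V)$ produces exactly the claimed identity.

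The routine parts are the product rule and the restrictions to $\Gamma$; the steps I expect to require the most care are justifying the commutation $(\nabla y)'=\nabla(y')$ in the correct function-space setting (here $y\in H^2(\Omega)$, so $\nabla y\in H^1(\Omega)$ and its domain-type shape derivative is well defined, with traces taken in the appropriate sense) and correctly matching the abstract formula for $N'(\Omega;V)$ in Lemma \ref{thm:yinli4} with its trace $d\textbf{n}$ on $\Gamma$. The algebraic identity $\nabla^2 b\,\nabla b=0$ is the one simplification that removes the curvature contribution from $\partial w/\partial\textbf{n}$, so I would state it explicitly rather than leave it implicit.
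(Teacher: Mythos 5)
Your proposal is correct and follows essentially the same route as the paper: both reduce the boundary quantity to the domain function $\nabla y\cdot\nabla b$, apply Lemma \ref{thm:yinli3}, compute the domain shape derivative by the product rule together with $N'(\Omega;V)$ from Lemma \ref{thm:yinli4}, and kill the extra term via $\nabla^2 b\,\textbf{n}=0$ on $\Gamma$. Your explicit justification of $\nabla^2 b\,\nabla b=0$ from $|\nabla b|^2\equiv 1$ is a welcome detail that the paper leaves implicit.
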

\begin{proof}
In fact, we know that $z(\Gamma)=\nabla y \cdot \nabla b |_{\Gamma}$.
By Lemma \ref{thm:yinli3} we have
\begin{equation*}
    z'(\Gamma;V)=(\nabla y \cdot \nabla b)'(\Omega;V)|_{\Gamma}
    +\frac{\partial (\nabla y \cdot \nabla b) }{\partial \textbf{n}}v_n.
\end{equation*}
Using chain's rule we can obtain
\begin{equation*}
\begin{split}
    (\nabla y \cdot \nabla b)'(\Omega;V)
=\nabla dy \cdot \nabla b +\nabla y \cdot N'(\Omega;V)
=\nabla dy \cdot \nabla b +\nabla y \cdot (d\textbf{n}-\nabla^2bV).
\end{split}
\end{equation*}
Notice that $\nabla^2b \textbf{n}=0$, there holds
\begin{equation*}
    \begin{split}
    \frac{\partial (\nabla y \cdot \nabla b) }{\partial \textbf{n}}
    =\nabla(\nabla y \cdot \nabla b) \cdot \textbf{n}
    =(\nabla^2y\nabla b+\nabla^2b\nabla y)\cdot \textbf{n} 
    =\nabla^2y\textbf{n} \cdot \textbf{n}
    \end{split}
\end{equation*}
on $\Gamma$.
Therefore, the conclusion follows.
\end{proof}

\begin{Lemma}{\rm{(\cite[Chapter 2, pp. 115-116]{sokolowski1992introduction})}}\label{thm:yinli6}
Let $z(\Gamma )\in L^{1}(\Gamma )$ have a shape derivative $z'(\Gamma ;V) \in L^{1}(\Gamma )$, then the Euler derivative of the shape functional $J(\Omega)=\int_\Gamma z(\Gamma )d\sigma$ reads  
\begin{equation}
    dJ(\Omega )[V]=\int_\Gamma z'(\Gamma;V)d\sigma +
    \int_\Gamma {\rm div}_\Gamma (z(\Gamma) V(0))d\sigma.
\end{equation}
\end{Lemma}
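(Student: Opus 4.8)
The plan is to reduce $dJ(\Omega)[V]$ to the differentiation of a single integral over the \emph{fixed} reference boundary $\Gamma$. First I would transport the integral over the moving boundary $\Gamma_t$ back to $\Gamma$ through the transformation $T_t=T_t(V)$, using the change-of-variables formula for surface integrals,
\begin{equation*}
    J(\Omega_t)=\int_{\Gamma_t} z(\Gamma_t)\,d\sigma_t
    =\int_\Gamma \big(z(\Gamma_t)\circ T_t\big)\,\omega_t\,d\sigma,
\end{equation*}
where $\omega_t=\det(DT_t)\,\lvert {}^*(DT_t)^{-1}\textbf{n}\rvert$ is the tangential (surface) Jacobian of $T_t$. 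Since the domain of integration is now independent of $t$, I would differentiate under the integral sign at $t=0$ and use $\omega_0=1$ to get
\begin{equation*}
    dJ(\Omega)[V]=\int_\Gamma \left[\left.\frac{d}{dt}\big(z(\Gamma_t)\circ T_t\big)\right|_{t=0}+z(\Gamma)\left.\frac{d}{dt}\omega_t\right|_{t=0}\right]d\sigma.
\end{equation*}

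This leaves two ingredients. By the very definition of the material derivative, the first bracketed term is $\overset{\circ}{z}(\Gamma;V)$. For the second, I would show $\left.\frac{d}{dt}\omega_t\right|_{t=0}={\rm div}_\Gamma V(0)$, the tangential divergence of the velocity field. With these in hand, I would then pass from the material to the shape derivative via the identity $\overset{\circ}{z}(\Gamma;V)=z'(\Gamma;V)+\nabla_\Gamma z(\Gamma)\cdot V(0)$ used already in Lemma~\ref{thm:yinli3}, and finally apply the tangential product rule ${\rm div}_\Gamma\big(z(\Gamma) V(0)\big)=\nabla_\Gamma z(\Gamma)\cdot V(0)+z(\Gamma)\,{\rm div}_\Gamma V(0)$ to merge the two surviving terms into the single divergence appearing in the statement.

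The main obstacle is the computation of the Jacobian derivative $\left.\frac{d}{dt}\omega_t\right|_{t=0}={\rm div}_\Gamma V(0)$. This is where the geometry enters: writing $\omega_t=\det(DT_t)\,\lvert {}^*(DT_t)^{-1}\textbf{n}\rvert$ and differentiating at $t=0$ requires the standard matrix identities $\left.\frac{d}{dt}\det(DT_t)\right|_{t=0}={\rm tr}\,DV(0)={\rm div}\,V(0)$ and $\left.\frac{d}{dt}(DT_t)^{-1}\right|_{t=0}=-DV(0)$, together with $DT_0=I$. Carrying these through yields $\left.\frac{d}{dt}\omega_t\right|_{t=0}={\rm div}\,V(0)-\langle DV(0)\textbf{n},\textbf{n}\rangle$, which is precisely the tangential divergence ${\rm div}_\Gamma V(0)$. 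Everything else is bookkeeping; the genuine content is controlling the variation of the surface measure, exactly as in the cited references \cite{sokolowski1992introduction,delfour2011shapes}.
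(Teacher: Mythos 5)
The paper does not prove this lemma itself but cites it from Sokolowski--Zol\'esio, and your argument is precisely the standard proof given there: pull the integral back to the fixed boundary via the tangential Jacobian $\omega_t=\det(DT_t)\,\lvert {}^*(DT_t)^{-1}\textbf{n}\rvert$, differentiate under the integral sign, use $\left.\tfrac{d}{dt}\omega_t\right|_{t=0}=\operatorname{div}V(0)-\langle DV(0)\textbf{n},\textbf{n}\rangle=\operatorname{div}_\Gamma V(0)$, and convert material to shape derivative with the tangential product rule. The computation is correct and consistent with the paper's own definitions and with its Lemmas \ref{thm:yinli3}, \ref{thm:yinli8} and \ref{thm:yinli314}.
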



\begin{Lemma}{\rm{(\cite[Chapter 9, p. 497]{delfour2011shapes})}}    \label{thm:yinli8}
Let $\textbf{v}\in  C^1(\Omega)^n $,  $v_n=\left\langle \textbf{v},\textbf{n}\right\rangle $ and $v_{\Gamma}=\textbf{v}-v_n\textbf{n}$, then
the following identities hold:
\begin{gather}
D\textbf{v}|_{\Gamma}=D_{\Gamma}\textbf{v}+D\textbf{v}\textbf{n}{}^*\textbf{n},\quad
{\rm div}_{\Gamma}\textbf{v}={\rm div}_{\Gamma}v_{\Gamma}+\mathcal{H}v_n,\quad 
\nabla _{\Gamma}v_n={}^*D_{\Gamma}\textbf{v}\textbf{n}+\nabla^2bv_{\Gamma}.\nonumber
\end{gather}
\end{Lemma}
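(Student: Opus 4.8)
The plan is to prove the three identities in sequence, each building on its predecessor, starting from the definition of the tangential gradient of a scalar, $\nabla_\Gamma\phi=\nabla\phi-\frac{\partial\phi}{\partial\textbf{n}}\textbf{n}$. For the first identity I would apply this componentwise to $\textbf{v}=(v_1,\dots,v_n)$. The $i$-th row of the Jacobian $D\textbf{v}$ is ${}^*\nabla v_i$, and since the $i$-th entry of $D\textbf{v}\,\textbf{n}$ equals $\frac{\partial v_i}{\partial\textbf{n}}=\nabla v_i\cdot\textbf{n}$, removing from each row its normal part assembles exactly the rank-one matrix $(D\textbf{v}\,\textbf{n}){}^*\textbf{n}$, whose $i$-th row is $\frac{\partial v_i}{\partial\textbf{n}}{}^*\textbf{n}$. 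Hence $D_\Gamma\textbf{v}=D\textbf{v}|_\Gamma-(D\textbf{v}\,\textbf{n}){}^*\textbf{n}$, which is the first identity after rearranging; this step is essentially the definition of $D_\Gamma$.

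For the second identity I would take the trace of the first, using $\mathrm{div}_\Gamma\textbf{v}=\mathrm{tr}(D_\Gamma\textbf{v})$. Decomposing $\textbf{v}=v_\Gamma+v_n\textbf{n}$ and applying the Leibniz rule $\mathrm{div}_\Gamma(\phi\textbf{w})=\nabla_\Gamma\phi\cdot\textbf{w}+\phi\,\mathrm{div}_\Gamma\textbf{w}$ with $\phi=v_n$ and $\textbf{w}=\textbf{n}$, the cross term $\nabla_\Gamma v_n\cdot\textbf{n}$ vanishes because $\nabla_\Gamma v_n$ is tangential, while $\mathrm{div}_\Gamma\textbf{n}=\mathcal{H}$ is the additive curvature. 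This gives $\mathrm{div}_\Gamma(v_n\textbf{n})=\mathcal{H}v_n$ and therefore $\mathrm{div}_\Gamma\textbf{v}=\mathrm{div}_\Gamma v_\Gamma+\mathcal{H}v_n$.

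The third identity is the main computation. I would extend $\textbf{n}$ to $N=\nabla b$ in a neighborhood of $\Gamma$, so that $v_n=\textbf{v}\cdot N$ there, and differentiate the scalar product to get $\nabla v_n={}^*D\textbf{v}\,N+{}^*(DN)\,\textbf{v}$. Since $DN=\nabla^2 b$ is symmetric, ${}^*(DN)=\nabla^2 b$, and restricting to $\Gamma$ yields $\nabla v_n={}^*D\textbf{v}\,\textbf{n}+\nabla^2 b\,\textbf{v}$. The key geometric input, already used in the proof of Lemma \ref{thm:yinli5}, is $\nabla^2 b\,\textbf{n}=0$ on $\Gamma$ (a consequence of $|\nabla b|\equiv 1$); it lets me replace $\nabla^2 b\,\textbf{v}$ by $\nabla^2 b\,v_\Gamma$ and shows this term is tangential. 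Projecting onto the tangent plane then gives $\nabla_\Gamma v_n=\nabla v_n-(\nabla v_n\cdot\textbf{n})\textbf{n}$, and transposing the first identity shows ${}^*D\textbf{v}\,\textbf{n}-((D\textbf{v}\,\textbf{n})\cdot\textbf{n})\textbf{n}={}^*D_\Gamma\textbf{v}\,\textbf{n}$, which is precisely the tangential part of ${}^*D\textbf{v}\,\textbf{n}$. Collecting terms gives $\nabla_\Gamma v_n={}^*D_\Gamma\textbf{v}\,\textbf{n}+\nabla^2 b\,v_\Gamma$.

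The main obstacle is the bookkeeping in this last step: one must track transposes carefully and verify that projecting ${}^*D\textbf{v}\,\textbf{n}$ onto the tangent space reproduces exactly ${}^*D_\Gamma\textbf{v}\,\textbf{n}$, i.e. that the scalar $(D\textbf{v}\,\textbf{n})\cdot\textbf{n}=({}^*D\textbf{v}\,\textbf{n})\cdot\textbf{n}$ removed by the projection coincides with the one removed in the first identity. The rest is routine differentiation together with the two geometric facts $\mathrm{div}_\Gamma\textbf{n}=\mathcal{H}$ and $\nabla^2 b\,\textbf{n}=0$.
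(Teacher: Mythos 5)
The paper does not prove this lemma; it is quoted verbatim from Delfour--Zol\'esio with only a citation, so there is no in-paper argument to compare against. Your proof is correct and is essentially the standard derivation: the first identity is the definition of $D_\Gamma\textbf{v}$ read row by row, the second follows by taking traces and using $\nabla_\Gamma v_n\cdot\textbf{n}=0$ together with ${\rm div}_\Gamma\textbf{n}=\Delta b=\mathcal{H}$, and the third follows from differentiating $v_n=\textbf{v}\cdot\nabla b$, using the symmetry of $\nabla^2 b$ and $\nabla^2 b\,\textbf{n}=0$, and the transposed form of the first identity to recognize the tangential part of ${}^*D\textbf{v}\,\textbf{n}$ as ${}^*D_\Gamma\textbf{v}\,\textbf{n}$. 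The one step worth spelling out, which you correctly flag, is that $(\nabla^2 b\,v_\Gamma)\cdot\textbf{n}=v_\Gamma\cdot(\nabla^2 b\,\textbf{n})=0$, so the projection onto the tangent plane only strips the normal component of ${}^*D\textbf{v}\,\textbf{n}$, and that component is exactly the scalar $(D\textbf{v}\,\textbf{n})\cdot\textbf{n}$ removed in the first identity.
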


\begin{Lemma}{\rm{(\cite[Chapter 2, p. 94]{sokolowski1992introduction})}}
Let $\Gamma$ belong to $C^2$ and $\phi \in H^3(\Omega)$,
then the following identity holds:
\begin{equation*}
    \Delta \phi=\Delta _{\Gamma}\phi +\mathcal{H} \frac{\partial \phi}{\partial \textbf{n}}
    +\frac{\partial ^2\phi}{\partial \textbf{n}^2}\quad {\rm on} \quad \Gamma,
\end{equation*}
where $\frac{\partial ^2\phi}{\partial \textbf{n}^2}=\nabla^2\phi \textbf{n} \cdot \textbf{n}$.
    \label{thm:yinli9}
\end{Lemma}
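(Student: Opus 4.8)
The plan is to derive the identity from the two decompositions recorded in Lemma \ref{thm:yinli8}, applied to the gradient field $\mathbf{v}=\nabla\phi$, using only elementary trace manipulations. Since Lemma \ref{thm:yinli8} is stated for $\mathbf{v}\in C^1(\Omega)^n$, I would first establish the identity for $\phi\in C^\infty(\overline{\Omega})$ and then extend it to $\phi\in H^3(\Omega)$ by density, invoking the continuity of the trace maps that give $\partial\phi/\partial\mathbf{n}$ and $\partial^2\phi/\partial\mathbf{n}^2$ as elements of the appropriate boundary spaces, together with the boundedness of the tangential operators $\Delta_\Gamma$ and $\nabla_\Gamma$. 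This reduces the whole argument to the smooth case.

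For smooth $\phi$, I would first record the auxiliary relation
\begin{equation*}
    {\rm div}\,\mathbf{v}={\rm div}_\Gamma\mathbf{v}+(D\mathbf{v}\,\textbf{n})\cdot\textbf{n}\quad{\rm on}\quad\Gamma,
\end{equation*}
obtained by taking the trace of the first identity $D\mathbf{v}|_\Gamma=D_\Gamma\mathbf{v}+D\mathbf{v}\,\textbf{n}\,{}^*\textbf{n}$ of Lemma \ref{thm:yinli8}. Indeed, ${\rm tr}(D\mathbf{v})={\rm div}\,\mathbf{v}$ and ${\rm tr}(D_\Gamma\mathbf{v})={\rm div}_\Gamma\mathbf{v}$ by definition of the tangential divergence, while the trace of the rank-one matrix $D\mathbf{v}\,\textbf{n}\,{}^*\textbf{n}$ equals ${}^*\textbf{n}\,(D\mathbf{v}\,\textbf{n})=(D\mathbf{v}\,\textbf{n})\cdot\textbf{n}$, using $\left\lvert\textbf{n}\right\rvert=1$.

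Next I would specialize to $\mathbf{v}=\nabla\phi$, so that $D\mathbf{v}=\nabla^2\phi$ and ${\rm div}\,\mathbf{v}=\Delta\phi$. The auxiliary relation then gives
\begin{equation*}
    \Delta\phi={\rm div}_\Gamma(\nabla\phi)+\frac{\partial^2\phi}{\partial\textbf{n}^2}\quad{\rm on}\quad\Gamma.
\end{equation*}
To treat the tangential divergence, I would apply the second identity ${\rm div}_\Gamma\mathbf{v}={\rm div}_\Gamma v_\Gamma+\mathcal{H}v_n$ of Lemma \ref{thm:yinli8} with $v_n=\left\langle\nabla\phi,\textbf{n}\right\rangle=\partial\phi/\partial\textbf{n}$ and $v_\Gamma=\nabla\phi-(\partial\phi/\partial\textbf{n})\textbf{n}=\nabla_\Gamma\phi$, here $\mathcal{H}$ being the additive curvature introduced in Theorem \ref{thm:dl23}. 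Recognizing ${\rm div}_\Gamma(\nabla_\Gamma\phi)=\Delta_\Gamma\phi$ by the definition of the Laplace--Beltrami operator, this yields ${\rm div}_\Gamma(\nabla\phi)=\Delta_\Gamma\phi+\mathcal{H}\,\partial\phi/\partial\textbf{n}$, and substituting into the previous display produces the claimed decomposition.

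I do not expect a serious obstacle, as the argument is a direct two-step application of Lemma \ref{thm:yinli8}. The only points demanding minor care are the trace computation in the auxiliary relation, namely correctly identifying the contribution of the normal part $D\mathbf{v}\,\textbf{n}\,{}^*\textbf{n}$ as exactly the second normal derivative $(D\mathbf{v}\,\textbf{n})\cdot\textbf{n}$, and, at the level of rigor, the density argument that transfers the pointwise identity from $C^\infty(\overline{\Omega})$ to $\phi\in H^3(\Omega)$.
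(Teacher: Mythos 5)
The paper does not prove this lemma at all --- it is imported verbatim from Sokolowski--Zol\'esio with only a citation --- so there is no internal proof to compare against. Your derivation is correct and is in fact the standard argument found in the cited references: taking the trace of the decomposition $D\mathbf{v}|_{\Gamma}=D_{\Gamma}\mathbf{v}+D\mathbf{v}\,\textbf{n}\,{}^*\textbf{n}$ applied to $\mathbf{v}=\nabla\phi$ correctly yields $\Delta\phi={\rm div}_{\Gamma}(\nabla\phi)+\nabla^2\phi\,\textbf{n}\cdot\textbf{n}$, and the second identity of Lemma \ref{thm:yinli8} with $v_\Gamma=\nabla_\Gamma\phi$, $v_n=\partial\phi/\partial\textbf{n}$ converts ${\rm div}_{\Gamma}(\nabla\phi)$ into $\Delta_\Gamma\phi+\mathcal{H}\,\partial\phi/\partial\textbf{n}$. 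The trace computation for the rank-one term and the identification ${\rm div}_\Gamma(\nabla_\Gamma\phi)=\Delta_\Gamma\phi$ are both handled correctly, and the density extension from $C^\infty(\overline{\Omega})$ to $H^3(\Omega)$ is the right way to justify the stated regularity hypothesis, since all terms then land in $L^2(\Gamma)$ via continuous trace maps. No gaps.
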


Finally, by Theorems \ref{thm:dl22} and \ref{thm:dl23}, we can derive the following lemmas.
\begin{Lemma}\label{thm:yinli10}
Let $u'(\Omega,V)$ be the shape derivative of $u$ in (\ref{con:equ1d6}) along the velocity field $V$, and we denote $u'(\Omega,V)$ as $du$, then $du$ satisfies the following equation:
\begin{equation}
    \begin{cases}
    -\Delta du=0\quad {\rm in}\quad \Omega ,\\
    \frac{\partial du}{\partial \textbf{n}}={\rm div}_{\Gamma }(v_n\nabla _{\Gamma }u)+v_n\left[f-\mathcal{H}  g-\frac{\partial g}{\partial \textbf{n}}\right] \quad {\rm on}\quad \Gamma, \\
    du=0\quad {\rm on}\quad\Sigma, 
    \end{cases}
\end{equation}
where $\mathcal{H}$ is the additive curvature of $\Gamma$.
\end{Lemma}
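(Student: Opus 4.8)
The plan is to apply Theorems \ref{thm:dl22} and \ref{thm:dl23} to the state equation for $u$ in (\ref{con:equ1d6}), treating its two boundary pieces separately. The state problem is a mixed boundary value problem: it carries Dirichlet data $u=h$ on the fixed inner boundary $\Sigma$ and Neumann data $\frac{\partial u}{\partial \textbf{n}}=-g$ on the free boundary $\Gamma$. Since $\Sigma$ is held fixed, the induced velocity field satisfies $V(0)|_\Sigma=0$, so the perturbation is localized near $\Gamma$ and the two boundary conditions for the shape derivative decouple. First I would determine the interior equation: the right-hand side $f$ is a prescribed function that does not depend on the domain, hence its domain-type shape derivative vanishes, $f'(\Omega;V)=0$, and the interior part of Theorems \ref{thm:dl22}--\ref{thm:dl23} yields $-\Delta du=0$ in $\Omega$.

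Next, on the fixed boundary $\Sigma$ I would invoke the Dirichlet trace formula of Theorem \ref{thm:dl22}, namely $u'(\Omega;V)=-\frac{\partial u}{\partial \textbf{n}}v_n+z'(\Gamma;V)$. Here the Dirichlet datum is the fixed function $h$, so its boundary shape derivative is zero, and $v_n=\left\langle V(0),\textbf{n}\right\rangle=0$ on $\Sigma$; therefore $du=0$ on $\Sigma$, which gives the third equation in the claim.

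Finally, on the free boundary $\Gamma$ I would apply the Neumann trace formula of Theorem \ref{thm:dl23} with $h(\Omega)=f$ and $z(\Gamma)=-g$. The only nontrivial ingredient is the boundary shape derivative of the datum $z=-g$. Writing the prescribed function $g$ in terms of its tangential and normal parts, $\nabla g=\nabla _\Gamma g+\frac{\partial g}{\partial \textbf{n}}\textbf{n}$, and using that $g$ is domain-independent so that its material derivative equals $\nabla g\cdot V(0)$, I obtain $g'(\Gamma;V)=\nabla g\cdot V(0)-\nabla _\Gamma g\cdot V(0)=\frac{\partial g}{\partial \textbf{n}}v_n$, hence $z'(\Gamma;V)=-\frac{\partial g}{\partial \textbf{n}}v_n$. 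Substituting into $\frac{\partial du}{\partial \textbf{n}}={\rm div}_\Gamma(v_n\nabla _\Gamma u)+[f+\mathcal{H}(-g)]v_n+z'(\Gamma;V)$ and collecting terms gives exactly $\frac{\partial du}{\partial \textbf{n}}={\rm div}_\Gamma(v_n\nabla _\Gamma u)+v_n[f-\mathcal{H}g-\frac{\partial g}{\partial \textbf{n}}]$, as asserted.

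The main obstacle to watch is that Theorems \ref{thm:dl22} and \ref{thm:dl23} are stated for pure Dirichlet and pure Neumann problems (the latter under a solvability/compatibility condition), whereas here the problem is genuinely mixed. The point to justify carefully is that, because $V(0)$ vanishes identically near $\Sigma$, the boundary condition derived on $\Gamma$ is a purely local statement and is unaffected by the Dirichlet part on $\Sigma$; moreover the compatibility condition of Theorem \ref{thm:dl23} is not needed, since the mixed problem is always solvable. The secondary bookkeeping concern is to keep the sign conventions straight (the Neumann datum is $-g$, not $g$) and to distinguish material from shape derivatives when differentiating the prescribed data $f$ and $g$.
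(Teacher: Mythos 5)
Your proposal is correct and follows exactly the route the paper intends: the paper gives no written proof of this lemma, stating only that it follows from Theorems \ref{thm:dl22} and \ref{thm:dl23}, and your argument is precisely that derivation carried out (with $h(\Omega)=f$, $z(\Gamma)=-g$, $g'(\Gamma;V)=\frac{\partial g}{\partial \textbf{n}}v_n$, and $V(0)|_\Sigma=0$ handling the fixed boundary). Your explicit attention to the mixed-boundary issue and to the sign of the Neumann datum is a sensible addition rather than a deviation.
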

\begin{Lemma}\label{thm:yinli11}
    Let $w'(\Omega,V)$ be the shape derivative of $w$ in (\ref{con:equ1d6}) along the velocity field $V$, and we denote $w'(\Omega,V)$ as $dw$, then $dw$ satisfies the following equation:
 \begin{equation}
        \begin{cases}
        -\Delta dw=0\quad {\rm in}\quad \Omega \\
        dw=-\frac{\partial w}{\partial \textbf{n}}v_n+du-gv_n\quad {\rm on}\quad \Gamma \\
        dw=0\quad {\rm on}\quad\Sigma.
        \end{cases}
    \end{equation}
\end{Lemma}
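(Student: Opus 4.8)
The plan is to apply the shape-differentiation result for the Dirichlet problem, Theorem \ref{thm:dl22}, directly to the second boundary value problem in (\ref{con:equ1d6}) that defines $w$, and then to resolve the boundary-type shape derivative of its (domain-dependent) Dirichlet datum using Lemma \ref{thm:yinli3}.

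First I would observe that $w$ solves $-\Delta w=0$ in $\Omega$ with Dirichlet data $w=u$ on $\Gamma$ and $w=0$ on $\Sigma$. This fits the template of Theorem \ref{thm:dl22} with source term $h(\Omega)=0$ and boundary datum $z(\Gamma)=u(\Omega)|_{\Gamma}$. Since $h\equiv 0$ forces $h'(\Omega;V)=0$, the theorem immediately yields $-\Delta dw=0$ in $\mathcal{D}'(\Omega)$, and on $\Gamma$ it gives
\begin{equation*}
    dw=-\frac{\partial w}{\partial \textbf{n}}v_n+z'(\Gamma;V)\quad {\rm on}\quad \Gamma,
\end{equation*}
using $V(0)\cdot \textbf{n}=v_n$. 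The only nontrivial ingredient left is $z'(\Gamma;V)$, the boundary-type shape derivative of the trace of $u$.

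Second, I would compute $z'(\Gamma;V)$ via Lemma \ref{thm:yinli3}. Taking $y(\Omega)=u$ and $z(\Gamma)=u|_{\Gamma}$ there, and recalling that $du=u'(\Omega;V)$ exists by Lemma \ref{thm:yinli10}, the lemma gives $z'(\Gamma;V)=du+\frac{\partial u}{\partial \textbf{n}}v_n$ on $\Gamma$. Now the Neumann condition for $u$ in (\ref{con:equ1d6}) reads $-\frac{\partial u}{\partial \textbf{n}}=g$ on $\Gamma$, so $\frac{\partial u}{\partial \textbf{n}}=-g$ and hence $z'(\Gamma;V)=du-gv_n$. Substituting into the boundary expression above produces exactly $dw=-\frac{\partial w}{\partial \textbf{n}}v_n+du-gv_n$ on $\Gamma$. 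Finally, on the fixed boundary $\Sigma$ the velocity field vanishes ($V(0)=0$) and the datum is the constant $0$, so both terms in the Theorem \ref{thm:dl22} boundary formula vanish and $dw=0$ on $\Sigma$, completing the system.

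The computation is short; the one conceptual point that must be handled carefully is that the Dirichlet datum of $w$ is not a fixed function but the shape-dependent trace $u(\Omega)|_{\Gamma}$, so one cannot simply differentiate it as if it were frozen. The correct object is its boundary-type shape derivative, which by Lemma \ref{thm:yinli3} mixes the interior shape derivative $du$ with the transport correction $\frac{\partial u}{\partial \textbf{n}}v_n$; overlooking this correction would drop the $-gv_n$ term. A secondary technical point is verifying the regularity hypotheses of Theorem \ref{thm:dl22} (that $u|_{\Gamma}\in H^{1/2}(\Gamma)$ and admits a boundary shape derivative), which follows from the assumed smoothness of the data together with the existence of $du$ already established in Lemma \ref{thm:yinli10}.
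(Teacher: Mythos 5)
Your proof is correct and follows exactly the route the paper intends: the paper states this lemma without a written proof, justifying it only by the remark that it follows from Theorem \ref{thm:dl22}, and your derivation — applying that theorem to the Dirichlet problem for $w$ and then resolving the shape derivative of the moving trace $u|_{\Gamma}$ via Lemma \ref{thm:yinli3} together with the Neumann condition $\frac{\partial u}{\partial \textbf{n}}=-g$ — is precisely the intended argument. Your closing caution about not freezing the shape-dependent Dirichlet datum is exactly the point that produces the $-gv_n$ term.
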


\subsection{Euler derivative}
Now we are ready to calculate the Euler derivative. 
\begin{Theorem}
    The Euler derivative of $J(\Omega )=\frac{1}{2}\int_{\Gamma}\left(\frac{\partial w}{\partial \textbf{n}}\right)^2 d\sigma$
    at $\Omega\subset \mathbb{R}^{n}$ along the velocity field $V$ reads
\begin{equation}
dJ(\Omega)[V]=
\int_{\Gamma}v_n\left\{-\frac{1}{2}p^2\mathcal{H} 
+p\left[ f-\mathcal{H} g-\frac{\partial g}{\partial \textbf{n}} \right] 
-(g+p)\frac{\partial p}{\partial \textbf{n}}\right\} d\sigma,\label{J_Euler}
\end{equation}
where  $p$ satisfies the following adjoint equation:
\begin{equation}
    \begin{cases}
        -\Delta p=0\quad {\rm in}\quad \Omega \\
        p=\frac{\partial w}{\partial \textbf{n}}\quad {\rm on}\quad \Gamma ,\\
        p=0\quad {\rm on}\quad\Sigma.
        \end{cases}\label{adjoint_state}
\end{equation}
\end{Theorem}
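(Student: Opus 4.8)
The plan is to carry out an adjoint-based shape-derivative computation: Lemma \ref{thm:yinli6} reduces $dJ$ to boundary integrals, and the adjoint state $p$ in (\ref{adjoint_state}) is then used to remove the normal derivatives of the unknown shape derivatives $du$ and $dw$. To begin, I would abbreviate $q:=\partial w/\partial\textbf{n}$ and $z:=\tfrac12 q^2$, so that $J(\Omega)=\int_\Gamma z\,d\sigma$. Applying Lemma \ref{thm:yinli6} gives
\begin{equation*}
dJ(\Omega)[V]=\int_\Gamma q\,q'(\Gamma;V)\,d\sigma+\int_\Gamma {\rm div}_\Gamma\!\big(z\,V(0)\big)\,d\sigma .
\end{equation*}
Since $\Gamma$ is a closed surface, the tangential divergence theorem (integrating the identity ${\rm div}_\Gamma\textbf{v}={\rm div}_\Gamma v_\Gamma+\mathcal{H}v_n$ of Lemma \ref{thm:yinli8}, whose tangential part integrates to zero) collapses the last integral to $\tfrac12\int_\Gamma q^2\mathcal{H}v_n\,d\sigma$.

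Next I would expand the boundary shape derivative via Lemma \ref{thm:yinli5} with $y=w$, namely $q'(\Gamma;V)=\partial dw/\partial\textbf{n}+\nabla w\cdot(d\textbf{n}-\nabla^2 bV)+(\partial^2 w/\partial\textbf{n}^2)\,v_n$ with $dw=w'(\Omega;V)$, and treat the three pieces separately. For the second normal derivative I would use Lemma \ref{thm:yinli9} together with $\Delta w=0$ and $w|_\Gamma=u$ to get $\partial^2 w/\partial\textbf{n}^2=-\Delta_\Gamma u-\mathcal{H}q$; the $-\mathcal{H}q$ contribution combines with the $\tfrac12 q^2\mathcal{H}v_n$ term above to produce exactly $-\tfrac12 p^2\mathcal{H}v_n$, using $p=q$ on $\Gamma$. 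For the term $\int_\Gamma q\,\partial dw/\partial\textbf{n}\,d\sigma$ I would invoke Green's second identity for the harmonic pair $(dw,p)$; the contributions on $\Sigma$ vanish because $p=dw=0$ there, leaving $\int_\Gamma dw\,\partial p/\partial\textbf{n}\,d\sigma$. Substituting $dw|_\Gamma=-(q+g)v_n+du$ from Lemma \ref{thm:yinli11} splits off $-\int_\Gamma(g+p)\,\partial p/\partial\textbf{n}\,v_n\,d\sigma$ and leaves $\int_\Gamma du\,\partial p/\partial\textbf{n}\,d\sigma$. A second Green identity for the harmonic pair $(du,p)$, followed by substitution of $\partial du/\partial\textbf{n}$ from Lemma \ref{thm:yinli10}, turns this into $\int_\Gamma p v_n\big[f-\mathcal{H}g-\partial g/\partial\textbf{n}\big]\,d\sigma$ plus the tangential remainder $\int_\Gamma p\,{\rm div}_\Gamma(v_n\nabla_\Gamma u)\,d\sigma$. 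At this stage all three target terms of (\ref{J_Euler}) are present.

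The main obstacle is then to show that the leftover geometric and tangential terms cancel. These are: the tangential remainder just found (which a tangential integration by parts converts into $-\int_\Gamma v_n\nabla_\Gamma p\cdot\nabla_\Gamma u\,d\sigma$), the piece $-\int_\Gamma q(\Delta_\Gamma u)v_n\,d\sigma$ left over from Lemma \ref{thm:yinli9}, and the geometric term $\int_\Gamma q\,\nabla w\cdot(d\textbf{n}-\nabla^2 bV)\,d\sigma$. The crux is the pointwise identity $d\textbf{n}-\nabla^2 bV=-\nabla_\Gamma v_n$ acting on tangent vectors, which I would derive from the expression for $d\textbf{n}$ in Lemma \ref{thm:yinli4} together with the decompositions $DV|_\Gamma=D_\Gamma V+DV\textbf{n}\,{}^*\textbf{n}$ and $\nabla_\Gamma v_n={}^*D_\Gamma V\textbf{n}+\nabla^2 bv_\Gamma$ of Lemma \ref{thm:yinli8}. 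Since $d\textbf{n}-\nabla^2 bV$ is tangential and $\nabla w=\nabla_\Gamma u+q\textbf{n}$ on $\Gamma$, the geometric term reduces to $-\int_\Gamma q\,\nabla_\Gamma u\cdot\nabla_\Gamma v_n\,d\sigma$. Finally, applying the tangential divergence theorem to the field $p v_n\nabla_\Gamma u$ rewrites $\int_\Gamma p v_n\Delta_\Gamma u\,d\sigma$ as $-\int_\Gamma v_n\nabla_\Gamma p\cdot\nabla_\Gamma u\,d\sigma-\int_\Gamma p\,\nabla_\Gamma v_n\cdot\nabla_\Gamma u\,d\sigma$, after which the three leftover integrals cancel identically, and collecting the survivors yields (\ref{J_Euler}). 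The points demanding genuine care are the sign bookkeeping in the two Green identities (where the boundary data on $\Sigma$ must be checked to vanish) and the verification of the tangential identity for $d\textbf{n}-\nabla^2 bV$, which is where all the geometry of the problem is concentrated; the requisite $H^2$ and $H^3$ regularity of $u$ and $w$ is supplied by elliptic regularity on the $C^{3,\alpha}$ domain.
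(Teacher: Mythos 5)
Your proposal is correct and follows essentially the same route as the paper's proof: Lemma \ref{thm:yinli6} plus Lemma \ref{thm:yinli5} for the boundary shape derivative, the identity $d\textbf{n}-\nabla^2 bV=-\nabla_\Gamma v_n$ from Lemmas \ref{thm:yinli4} and \ref{thm:yinli8}, two Green identities with the adjoint state combined with Lemmas \ref{thm:yinli10} and \ref{thm:yinli11}, and Lemma \ref{thm:yinli9} to eliminate $\partial^2 w/\partial\textbf{n}^2$. The only differences are cosmetic reorderings (you invoke Lemma \ref{thm:yinli9} early and collapse $\int_\Gamma\mathrm{div}_\Gamma(zV)\,d\sigma$ to $\int_\Gamma \mathcal{H}zv_n\,d\sigma$ directly via the closed-surface tangential divergence theorem, whereas the paper carries $\mathrm{div}_\Gamma(p\nabla_\Gamma w)$ along and cancels at the end), and the final three-term cancellation you describe is exactly the paper's.
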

\begin{proof}
We choose $z(\Gamma)=\frac{1}{2}\left(\frac{\partial w}{\partial \textbf{n}}\right)^2 $ in Lemma \ref{thm:yinli6} and denote $V(0)$ as $V$, then
\begin{equation}
    \begin{split}
        dJ(\Omega )[V]
        &=\int_\Gamma z'(\Gamma;V)d\sigma +
        \int_\Gamma {\rm div}_\Gamma (z(\Gamma) V)d\sigma \\
        &=\int_\Gamma z'(\Gamma;V)d\sigma +
        \int_\Gamma \nabla_\Gamma z(\Gamma) \cdot Vd\sigma 
        +\int_\Gamma z(\Gamma){\rm div}_{\Gamma}Vd\sigma.
    \end{split}\nonumber
\end{equation}
Using Lemma \ref{thm:yinli5} we have
\begin{equation*}
    \left(\frac{\partial w}{\partial \textbf{n}}\right)'(\Gamma;V)
    =\frac{\partial dw}{\partial \textbf{n}}+
    \nabla w\cdot (d\textbf{n}-\nabla^2bV)+\frac{\partial ^2w}{\partial \textbf{n}^2}v_n.
\end{equation*}
By using chain's rule, we obtain that
\begin{equation*}
    \begin{split}
        z'(\Gamma;V)
        &=\frac{\partial w}{\partial \textbf{n}}
        \left(\frac{\partial dw}{\partial \textbf{n}}+
        \nabla w\cdot (d\textbf{n}-\nabla^2bV)+\frac{\partial ^2w}{\partial \textbf{n}^2}v_n\right).
    \end{split}
\end{equation*}
Noticing that
\begin{gather*}
 \frac{1}{2}\nabla[\left(\nabla w\cdot \nabla b\right)^2 ]
 =\left(\nabla w\cdot \nabla b\right)\left(\nabla^2w\nabla b+\nabla^2b\nabla w\right) 
 =\frac{\partial w}{\partial \textbf{n}}\left(\nabla^2w\textbf{n}+\nabla^2b\nabla w\right)
\end{gather*}
 and $\nabla^2b\textbf{n}=\textbf{0}$ on $\Gamma$,
 we get
\begin{gather*}
    {1\over 2}\frac{\partial (\nabla w\cdot \nabla b)^2}{\partial \textbf{n}}=
\frac{\partial w}{\partial \textbf{n}}\frac{\partial ^2w}{\partial \textbf{n}^2}.
\end{gather*}
It follows from the definition of the tangential gradient that
\begin{equation*}
\nabla _{\Gamma}z(\Gamma)=
\frac{\partial w}{\partial \textbf{n}}\left(\nabla^2w\textbf{n}+\nabla^2b\nabla w\right)
-\frac{\partial w}{\partial \textbf{n}}\frac{\partial ^2w}{\partial \textbf{n}^2}\textbf{n}.
\end{equation*}

Moreover, it holds
\begin{equation*}
    \nabla _{\Gamma}z(\Gamma)\cdot V=
    \frac{\partial w}{\partial \textbf{n}}\left(\nabla^2w\textbf{n}\cdot V+\nabla^2b\nabla w\cdot V\right)
    -\frac{\partial w}{\partial \textbf{n}}\frac{\partial ^2w}{\partial \textbf{n}^2}v_n.
\end{equation*}
It follows from Lemma \ref{thm:yinli8} that
\begin{equation}
    z(\Gamma){\rm div}_{\Gamma}V=\frac{1}{2}\left(\frac{\partial w}{\partial \textbf{n}}\right)^2 
    \left[{\rm div}_{\Gamma}v_{\Gamma}+\mathcal{H}  v_n\right] \nonumber,
\end{equation}
we can obtain that
\begin{equation*}
    \begin{split}
    dJ(\Omega)[V]
    &=\int_{\Gamma} \frac{\partial w}{\partial \textbf{n}}
    \left[\frac{\partial dw}{\partial \textbf{n}}+
    \nabla w\cdot (d\textbf{n}-\nabla^2bV)\right] \\
    &+\frac{\partial w}{\partial \textbf{n}}\left(\nabla^2w\textbf{n}\cdot V+\nabla^2b\nabla w\cdot V\right)+\frac{1}{2}\left(\frac{\partial w}{\partial \textbf{n}}\right)^2 
    \left[{\rm div}_{\Gamma}v_{\Gamma}+\mathcal{H}  v_n\right] d\sigma.
    \end{split}
\end{equation*}
Next, we will simplify the expression of $dJ(\Omega)[V]$ in a few steps.

To begin with, we recall the tangential Green's formula (cf. \cite[Chapter 9, p. 498]{delfour2011shapes}): 
\begin{equation}
    \int_\Gamma f{\rm div}_{\Gamma}\textbf{v}+\nabla_{\Gamma}f\cdot \textbf{v} d\sigma
    =\int_\Gamma \mathcal{H}  f\textbf{v}\cdot \textbf{n}d\sigma\label{tangential_green}
\end{equation}
for $f \in C^1(\Gamma)$ and $\textbf{v} \in C^1(\Gamma)^n$. Therefore, we obtain
\begin{equation*}
    \int_{\Gamma}\frac{1}{2}\left(\frac{\partial w}{\partial \textbf{n}}\right)^2
    {\rm div}_{\Gamma}v_{\Gamma}d\sigma
    =-\frac{1}{2}\int_{\Gamma}\nabla_{\Gamma} \left(\left(\frac{\partial w}{\partial \textbf{n}}\right)^2 \right) \cdot v_{\Gamma}d\sigma.
\end{equation*}
It follows from the expression of $\nabla_{\Gamma}z(\Gamma)$ that
\begin{equation*}
    \nabla_{\Gamma}  \left(\left(\frac{\partial w}{\partial \textbf{n}}\right)^2 \right) 
    =
2\left[\frac{\partial w}{\partial \textbf{n}}\left(\nabla^2w\textbf{n}+\nabla^2b\nabla w\right)
-\frac{\partial w}{\partial \textbf{n}}\frac{\partial ^2w}{\partial \textbf{n}^2}\textbf{n}\right] .
\end{equation*}
Moreover, we conclude from the identity $v_{\Gamma}\cdot \textbf{n}=0$ that
\begin{equation*}
    \nabla_{\Gamma}  \left(\left(\frac{\partial w}{\partial \textbf{n}}\right)^2 \right) \cdot v_{\Gamma}
    =2\left[\frac{\partial w}{\partial \textbf{n}}\left(\nabla^2w\textbf{n}+\nabla^2b\nabla w\right)\cdot v_{\Gamma} \right]. 
\end{equation*}
As a result, 
\begin{equation*}
    \int_{\Gamma}\frac{1}{2}\left(\frac{\partial w}{\partial \textbf{n}}\right)^2
    {\rm div}_{\Gamma}v_{\Gamma}d\sigma
    =
    -\int_{\Gamma} \frac{\partial w}{\partial \textbf{n}}\left(\nabla^2w\textbf{n}\cdot v_{\Gamma}+\nabla^2b\nabla w\cdot v_{\Gamma}\right)
    d\sigma .
\end{equation*}
Combining the above computations, we obtain
\begin{equation*}
    \begin{split}
    &\int_{\Gamma}\frac{1}{2}\left(\frac{\partial w}{\partial \textbf{n}}\right)^2
    {\rm div}_{\Gamma}v_{\Gamma}
    +\frac{\partial w}{\partial \textbf{n}}\left(\nabla^2w\textbf{n}\cdot V+\nabla^2b\nabla w\cdot V\right)\\
    &=\int_{\Gamma}\frac{\partial w}{\partial \textbf{n}}
    \left(
        \nabla^2w\textbf{n}\cdot V+\nabla^2b\nabla w\cdot V
        -\nabla^2w\textbf{n}\cdot v_{\Gamma}-\nabla^2b\nabla w\cdot v_{\Gamma}
    \right) d\sigma\\
    &=\int_{\Gamma}\frac{\partial w}{\partial \textbf{n}}
    \left(\frac{\partial ^2w}{\partial \textbf{n}^2}v_n+v_n\nabla^2b\nabla w\cdot \textbf{n}\right) d\sigma\\
    &=\int_{\Gamma}\frac{\partial w}{\partial \textbf{n}}\frac{\partial ^2w}{\partial \textbf{n}^2}v_n d\sigma.
    \end{split}
\end{equation*}

Secondly, we have
\begin{equation}\nonumber
    \begin{split}
        \frac{\partial w}{\partial \textbf{n}}\nabla w\cdot\left[d\textbf{n}-\nabla^2bV\right]
    &=\frac{\partial w}{\partial \textbf{n}}{}^{*}\nabla w\left[d\textbf{n}-\nabla^2bV\right]\\
    &=\frac{\partial w}{\partial \textbf{n}}{}^{*}\nabla w\left[(DV\textbf{n}\cdot \textbf{n})\textbf{n}- {}^*DV\textbf{n}-\nabla^2bV\right]\\
    &=\frac{\partial w}{\partial \textbf{n}}\left[(DV\textbf{n}\cdot \textbf{n}){}^{*}\nabla w\textbf{n}
    -\nabla w \cdot \left({}^*DV\textbf{n}+\nabla^2bV\right)   \right] \\
    &=\frac{\partial w}{\partial \textbf{n}}\left[\frac{\partial w}{\partial \textbf{n}}(DV\textbf{n}\cdot \textbf{n})
    -\nabla w \cdot \left({}^*DV\textbf{n}+\nabla^2bV\right)   \right] .
    \end{split}
\end{equation}
By using Lemma \ref{thm:yinli8} we have
\begin{equation*}
{}^{*}DV\textbf{n}={}^{*}D_{\Gamma}V\textbf{n}+
\left(DV\textbf{n} \cdot \textbf{n}\right)\textbf{n} .
\end{equation*}
Therefore,
\begin{equation*}
\nabla w \cdot \left({}^{*}DV\textbf{n}\right) 
=\nabla w \cdot \left({}^{*}D_{\Gamma}V\textbf{n}\right)
+\frac{\partial w}{\partial \textbf{n}}\left(DV\textbf{n} \cdot \textbf{n}\right) .
\end{equation*}
Moreover, we can deduce from  Lemma \ref{thm:yinli4}, Lemma \ref{thm:yinli8} and the tangential Green's formula that
\begin{equation}   \nonumber
\begin{split}
\frac{\partial w}{\partial \textbf{n}}\nabla w\cdot\left[d\textbf{n}-\nabla^2bV\right]
=-\frac{\partial w}{\partial \textbf{n}}
\nabla w \cdot \left[{}^{*}D_{\Gamma}V\textbf{n}+\nabla^2bV\right] 
=-\frac{\partial w}{\partial \textbf{n}}
\nabla w \cdot \nabla _{\Gamma}v_n
=-\frac{\partial w}{\partial \textbf{n}}
\nabla  _{\Gamma}w \cdot \nabla _{\Gamma}v_n
\end{split}
\end{equation}
and 
\begin{equation*}
    \begin{split}
    \int_{\Gamma}\frac{\partial w}{\partial \textbf{n}}\nabla w\cdot\left[d\textbf{n}-\nabla^2bV\right]d\sigma
    =-\int_{\Gamma}\frac{\partial w}{\partial \textbf{n}}
    \nabla  _{\Gamma}w \cdot \nabla _{\Gamma}v_nd\sigma
    =\int_{\Gamma}{\rm div}_{\Gamma}\left(\frac{\partial w}{\partial \textbf{n}}
    \nabla  _{\Gamma}w\right) v_nd\sigma.
    \end{split}
\end{equation*}
So far, we can obtain
\begin{equation*}
    \begin{split}
        dJ(\Omega)[V]
        =\int_{\Gamma}\frac{\partial w}{\partial \textbf{n}}
        \frac{\partial dw}{\partial \textbf{n}}
        +\left[
            {\rm div}_{\Gamma}\left(\frac{\partial w}{\partial \textbf{n}}
            \nabla  _{\Gamma}w\right)
            +\frac{\partial w}{\partial \textbf{n}}\frac{\partial ^2w}{\partial \textbf{n}^2}
        +\frac{1}{2}\left(\frac{\partial w}{\partial \textbf{n}}\right)^2\mathcal{H} 
            \right]v_n d\sigma.
    \end{split}
\end{equation*}

Next, let $p$ satisfy the adjoint equation defined in (\ref{adjoint_state}). 
By using Green's formula, the tangential Green's formula (\ref{tangential_green}) and the equations of $w,du,dw$ on $\Gamma$, we can derive that
\begin{equation*}
\begin{split}
    \int_{\Gamma}\frac{\partial w}{\partial \textbf{n}}
    \frac{\partial dw}{\partial \textbf{n}}d\sigma
    &=\int_{\Gamma}p\frac{\partial dw}{\partial \textbf{n}}d\sigma
    =\int_{\Gamma}dw\frac{\partial p}{\partial \textbf{n}}d\sigma\\
    &=\int_{\Gamma}du\frac{\partial p}{\partial \textbf{n}}
    -\left(g+\frac{\partial w}{\partial \textbf{n}}\right)\frac{\partial p}{\partial \textbf{n}}v_n d\sigma
    =\int_{\Gamma}p\frac{\partial du}{\partial \textbf{n}}
    -\left(g+\frac{\partial w}{\partial \textbf{n}}\right)\frac{\partial p}{\partial \textbf{n}}v_n d\sigma\\
    &=\int_{\Gamma}p{\rm div}_{\Gamma}(v_n\nabla_{\Gamma}u)d\sigma
    +\int_{\Gamma}p\left[f-\mathcal{H} g-\frac{\partial g}{\partial \textbf{n}}\right] v_nd\sigma
    -\left(g+\frac{\partial w}{\partial \textbf{n}}\right)\frac{\partial p}{\partial \textbf{n}}v_n d\sigma\\
    &=\int_{\Gamma}v_n\left\{-\nabla_{\Gamma}p\cdot \nabla_{\Gamma}u
    +p\left[f-\mathcal{H} g-\frac{\partial g}{\partial \textbf{n}}\right] 
    -\left(g+\frac{\partial w}{\partial \textbf{n}}\right)\frac{\partial p}{\partial \textbf{n}}
    \right\} d\sigma\\
    &=\int_{\Gamma}v_n\left\{-\nabla_{\Gamma}p\cdot \nabla_{\Gamma}w
    +p\left[f-\mathcal{H} g-\frac{\partial g}{\partial \textbf{n}}\right] 
    -\left(g+p\right)\frac{\partial p}{\partial \textbf{n}}
    \right\} d\sigma.
\end{split}
\end{equation*}
This implies
\begin{equation*}
    \begin{split}
        dJ(\Omega)[V]
       & =\int_{\Gamma}
        \left\{-\nabla_{\Gamma}p\cdot \nabla_{\Gamma}w
        +p\left[f-\mathcal{H} g-\frac{\partial g}{\partial \textbf{n}}\right] 
        -\left(g+p\right)\frac{\partial p}{\partial \textbf{n}}
        \right\}v_n 
        \\
       &\quad+\left[
            {\rm div}_{\Gamma}\left(\frac{\partial w}{\partial \textbf{n}}
            \nabla  _{\Gamma}w\right)
            +\frac{\partial w}{\partial \textbf{n}}\frac{\partial ^2w}{\partial \textbf{n}^2}
        +\frac{1}{2}\left(\frac{\partial w}{\partial \textbf{n}}\right)^2\mathcal{H} 
            \right]v_n d\sigma\\
        &=\int_{\Gamma}
        \left\{-\nabla_{\Gamma}p\cdot \nabla_{\Gamma}w
        +p\left[f-\mathcal{H} g-\frac{\partial g}{\partial \textbf{n}}\right] 
        -\left(g+p\right)\frac{\partial p}{\partial \textbf{n}}
        \right\}v_n 
        \\
       & \quad+\left[
            {\rm div}_{\Gamma}\left(p
            \nabla  _{\Gamma}w\right)
            +p\frac{\partial ^2w}{\partial \textbf{n}^2}
        +\frac{1}{2}p^2\mathcal{H} 
            \right]v_n d\sigma.
    \end{split}
\end{equation*}

Finally, using Lemma \ref{thm:yinli9} we get
\begin{equation*}
    0=\Delta w=\frac{\partial ^2w}{\partial \textbf{n}^2}+\Delta_{\Gamma}w
    +\mathcal{H} \frac{\partial w}{\partial \textbf{n}}.
\end{equation*}
Therefore,
\begin{equation*}
    \begin{split}
        {\rm div}_{\Gamma}\left(p\nabla  _{\Gamma}w\right)
        &=\nabla_{\Gamma}p\cdot \nabla _{\Gamma}w 
        +p\Delta_{\Gamma}w\\
        &=\nabla_{\Gamma}p\cdot \nabla _{\Gamma}w 
        +p\left(-\frac{\partial ^2w}{\partial \textbf{n}^2}-
        \mathcal{H} \frac{\partial w}{\partial \textbf{n}}\right)\\
        &=\nabla_{\Gamma}p\cdot \nabla _{\Gamma}w 
        -p\frac{\partial ^2w}{\partial \textbf{n}^2}-
        p^2\mathcal{H}  .
    \end{split}
\end{equation*}
We now arrive at 
\begin{equation*}
    \begin{split}
        dJ(\Omega)[V]
        &=\int_{\Gamma}v_n\left\{
-\frac{1}{2}p^2\mathcal{H} 
+p\left[f-\mathcal{H} g-\frac{\partial g}{\partial \textbf{n}}\right]
-\left(g+p\right)\frac{\partial p}{\partial \textbf{n}}
        \right\} d\sigma.
    \end{split}
\end{equation*}
Thus, we have completed the proof.
\end{proof}

Recalling that $\Omega^*$ is the solution to problem $(\mathbf{P})$ with the free boundary $\Gamma^*$, i.e., $\Omega^*$
satisfies the following over-determined boundary value problem:
\begin{equation}
    \begin{cases}
    -\Delta u=f\quad {\rm in}\quad \Omega^*, \\
    -\frac{\partial u}{\partial \textbf{n}}=g,\ u=0\quad {\rm on}\quad \Gamma^*, \\
    u=h\quad {\rm on}\quad\Sigma ,
    \end{cases}
\end{equation}
we have the following corollary.
\begin{Corollary}
   The optimal domain $\Omega^*$ satisfies the first order optimality condition:
    \begin{equation}
        dJ(\Omega^*)[dr]=0\quad\forall dr \in X.   
    \end{equation}
\end{Corollary}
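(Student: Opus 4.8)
The plan is to exploit the structure of the Euler derivative formula \eqref{J_Euler}, in which every term inside the braces carries a factor of either $p$ or $\partial p/\partial \textbf{n}$. Hence it suffices to show that the adjoint state $p$ vanishes identically on the optimal domain $\Omega^*$, after which the conclusion follows immediately upon substitution.

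First I would observe that at the optimal domain the state $w$ vanishes. Indeed, since $\Omega^*$ solves $(\mathbf{P})$, the state $u$ satisfies $u=0$ on $\Gamma^*$; because $w=u$ on $\Gamma^*$ and $w=0$ on $\Sigma$ according to \eqref{con:equ1d6}, the harmonic function $w$ has zero Dirichlet data on the entire boundary $\partial\Omega^*=\Gamma^*\cup\Sigma$. By the uniqueness of the solution to the Dirichlet problem for $-\Delta w=0$, we conclude $w\equiv 0$ in $\Omega^*$, and in particular $\partial w/\partial \textbf{n}=0$ on $\Gamma^*$.

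Next I would feed this into the adjoint equation \eqref{adjoint_state}. Since its boundary datum $p=\partial w/\partial \textbf{n}$ on $\Gamma^*$ now vanishes, and $p=0$ on $\Sigma$, the harmonic function $p$ again has zero Dirichlet data on all of $\partial\Omega^*$; by the same uniqueness argument $p\equiv 0$ in $\Omega^*$. Consequently both $p=0$ and $\partial p/\partial \textbf{n}=0$ hold on $\Gamma^*$.

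Finally I would substitute $p=0$ and $\partial p/\partial \textbf{n}=0$ into \eqref{J_Euler}: each of the three summands in the braces contains a factor of $p$ or $\partial p/\partial \textbf{n}$, so the integrand vanishes pointwise on $\Gamma^*$ and thus $dJ(\Omega^*)[V]=0$ for every velocity field $V$. Recalling the identification of $V$ with the generating function $dr\in X$, this yields $dJ(\Omega^*)[dr]=0$ for all $dr\in X$. There is no serious obstacle here; the only point requiring care is the twofold appeal to uniqueness for the harmonic Dirichlet problem, which is justified by the regularity $\Gamma^*\in C^{3,\alpha}$ assumed throughout.
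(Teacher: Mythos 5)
Your proposal is correct and follows exactly the paper's route: the paper's own proof simply asserts that $p(\Omega^*)\equiv 0$ and substitutes into \eqref{J_Euler}, and you have supplied the (easy) verification of this fact via $u=0$ on $\Gamma^*$ implying $w\equiv 0$ and hence $p\equiv 0$ by uniqueness of the harmonic Dirichlet problem. No gaps; your write-up is just a more detailed version of the same argument.
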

\begin{proof}
    It is easy to check that $p(\Omega^*)\equiv 0$ in $\Omega^*$, so we can obtain the result by inserting $p$ into the expression of $dJ(\Omega^*)[V]$.   
    \end{proof}

\subsection{Shape Hessian}
In this subsection we calculate the shape Hessian. To begin with, we first give the change of variables formulae for the unit normal vector and the boundary integrals.
\begin{Lemma}{\rm{(\cite[Chapter 9, pp. 488-489]{delfour2011shapes}, \cite[Chapter 4, pp. 15]{allaire2021shape})}}\label{thm:yinli314}
Let $\Omega \subset \mathbb{R} ^{n}$ be a bounded $C^1$ domain and $T$ be a $C^1$ diffeomorphism of $\mathbb{R} ^{n}$. Then
\begin{equation}
    \textbf{n}_T\circ T=\frac{{}^*(DT)^{-1}\textbf{n}}{\| {}^*(DT)^{-1}\textbf{n} \Vert  },
\end{equation}
where $\textbf{n}$ and $\textbf{n}_T$ are the unit normal vectors of $\partial\Omega$ and $T(\partial\Omega)$ respectively. Moreover, $f\in L^1(T(\partial \Omega))$ if and only if
$f\circ T \in L^1(\partial \Omega) $ and 
\begin{equation}
\int_{T(\partial \Omega)}f d\sigma=
\int_{\partial \Omega} f\circ T \|{}^*(DT)^{-1}\textbf{n} \Vert | \det(DT)\vert d\sigma.\label{con:yinli311eq312}
\end{equation}
\end{Lemma}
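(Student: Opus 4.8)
The plan is to treat the two assertions separately: first the transformation law for the normal, then the Jacobian appearing in the surface integral. Both are local statements that reduce to linear algebra on the differential $DT$, so I would argue pointwise and globalize by a partition of unity at the end.

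For the normal vector, the guiding principle is that tangent directions push forward by $DT$ while the normal is characterized purely by orthogonality. Fix $x\in\partial\Omega$ and let $\tau$ be any vector tangent to $\partial\Omega$ at $x$, so $\tau\cdot\textbf{n}=0$. Since $T$ maps $\partial\Omega$ diffeomorphically onto $T(\partial\Omega)$, the vector $DT(x)\tau$ is tangent to $T(\partial\Omega)$ at $T(x)$, hence $\textbf{n}_T\cdot DT\,\tau=0$, i.e. $\bigl({}^*DT\,\textbf{n}_T\bigr)\cdot\tau=0$ for every tangent $\tau$. This forces ${}^*DT\,\textbf{n}_T=\lambda\textbf{n}$ for a scalar $\lambda$, and solving gives $\textbf{n}_T=\lambda\,{}^*(DT)^{-1}\textbf{n}$. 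Imposing $|\textbf{n}_T|=1$ fixes $|\lambda|=1/\|{}^*(DT)^{-1}\textbf{n}\|$, and the sign is pinned down by requiring $\textbf{n}_T$ to be the outward normal; this yields the stated formula.

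For the surface integral I would work in a local chart. Choose a parametrization $\phi:\omega\subset\mathbb{R}^{n-1}\to\partial\Omega$, so that $d\sigma=\sqrt{\det({}^*D\phi\,D\phi)}\,ds$ while $T\circ\phi$ parametrizes the image and gives $d\sigma_T=\sqrt{\det({}^*D\phi\,{}^*DT\,DT\,D\phi)}\,ds$. The whole claim then reduces to the pointwise tangential-Jacobian identity
\[
\sqrt{\det\bigl({}^*B\,{}^*A\,A\,B\bigr)}=\lvert\det A\rvert\,\|{}^*A^{-1}\textbf{n}\|\,\sqrt{\det({}^*B\,B)},
\]
where $A:=DT$ is invertible and $B:=D\phi$ has columns spanning the tangent space (hence orthogonal to $\textbf{n}$). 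Substituting this into the expression for $d\sigma_T$ and changing variables back to $\partial\Omega$ produces exactly the factor $\|{}^*(DT)^{-1}\textbf{n}\|\,\lvert\det(DT)\rvert$, and the $L^1$ equivalence follows since this factor is bounded and bounded below on the compact boundary.

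The crux, and the step I expect to require the most care, is the determinant identity above. I would prove it by augmenting the tangent frame with the unit normal: since $\textbf{n}$ is a unit vector orthogonal to the columns of $B$, the $n\times n$ matrix $[\,B\ \textbf{n}\,]$ satisfies $\lvert\det[\,B\ \textbf{n}\,]\rvert=\sqrt{\det({}^*B\,B)}$, and likewise $\lvert\det[\,AB\ \textbf{n}_T\,]\rvert=\sqrt{\det({}^*(AB)(AB))}$ because $\textbf{n}_T$ is the unit normal to the image frame by the first part. Writing $[\,AB\ \textbf{n}_T\,]=A\,[\,B\ A^{-1}\textbf{n}_T\,]$ and discarding the tangential component of $A^{-1}\textbf{n}_T$ (which lies in the span of the columns of $B$ and so leaves the determinant unchanged) gives $\det[\,AB\ \textbf{n}_T\,]=\det A\,(A^{-1}\textbf{n}_T\cdot\textbf{n})\,\det[\,B\ \textbf{n}\,]$. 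The remaining scalar simplifies using ${}^*(A^{-1})={}^*A^{-1}$: indeed $A^{-1}\textbf{n}_T\cdot\textbf{n}=({}^*A^{-1}\textbf{n})\cdot({}^*A^{-1}\textbf{n})/\|{}^*A^{-1}\textbf{n}\|=\|{}^*A^{-1}\textbf{n}\|$, which upon taking absolute values delivers the identity. Equivalently, one may recognize the combination $\lvert\det A\rvert\,{}^*A^{-1}\textbf{n}=\mathrm{cof}(A)\,\textbf{n}$ and read the result off as Nanson's formula from continuum mechanics.
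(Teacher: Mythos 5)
Your proof is correct. Note, however, that the paper does not prove this lemma at all: it is quoted verbatim from Delfour--Zol\'esio and Allaire et al., so there is no in-paper argument to compare against. Your derivation is the standard one and is sound: the normal formula follows from the orthogonality characterization (tangents push forward by $DT$, so ${}^*DT\,\textbf{n}_T$ annihilates the tangent space and must be proportional to $\textbf{n}$), and the surface Jacobian reduces in a chart to the determinant identity $\sqrt{\det({}^*B\,{}^*A\,A\,B)}=\lvert\det A\rvert\,\Vert{}^*A^{-1}\textbf{n}\Vert\sqrt{\det({}^*B\,B)}$, which your bordered-matrix computation establishes correctly; the key scalar $A^{-1}\textbf{n}_T\cdot\textbf{n}=\Vert{}^*A^{-1}\textbf{n}\Vert$ is evaluated properly using the first part, and the identification with the cofactor matrix (Nanson's formula) is a legitimate shortcut. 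The only point you wave at rather than prove is the sign in the normal formula; it is pinned down by observing that $DT\,\textbf{n}$ points out of $T(\Omega)$ (since a global diffeomorphism maps exterior to exterior) and $DT\,\textbf{n}\cdot{}^*(DT)^{-1}\textbf{n}=\Vert\textbf{n}\Vert^2=1>0$, so ${}^*(DT)^{-1}\textbf{n}$ lies on the outward side. The $L^1$ equivalence indeed follows because the Jacobian factor is continuous and strictly positive on the compact boundary.
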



Let $r(\widehat{x} )\in X$ be the parametrization of the free boundary of $\Omega$, and let $r_\varepsilon (\widehat{x} )=r(\widehat{x} )+\varepsilon dr(\widehat{x} )$ be the parametrization of the free boundary of $\Omega_{\varepsilon }$, where $dr \in X$. Then the free boundaries before and after transformation are given by
\begin{gather}
\Gamma =\left\{\gamma (\widehat{x} )=r(\widehat{x} )\widehat{x}: \quad\widehat{x} \in \mathbb{S} ^{n-1} \right\} ,\quad 
\Gamma_\varepsilon =\left\{\gamma (\widehat{x} )=r_\varepsilon(\widehat{x} )\widehat{x}: \quad\widehat{x} \in \mathbb{S} ^{n-1} \right\}. \nonumber
\end{gather}

\begin{Theorem}\label{thm:dl316}
Under the above notations and conditions, the following two statements hold:
\begin{itemize}
    \item[(1)] The unit outward normal vector at  $x\in \Gamma$ is given by the following formula:
\begin{equation}
    \textbf{n}(x)=\frac{r(\widehat{x} )\widehat{x}-\nabla _{\mathbb{S} }r(\widehat{x} ) }{\sqrt{r^2(\widehat{x} ) +
    \|\nabla _{\mathbb{S} }r(\widehat{x} ) \Vert ^2}},
\end{equation}
where $\widehat{x}=\frac{x}{\|x \Vert } $ and
$\nabla _{\mathbb{S} }r(\widehat{x} )$ is the tangential gradient of $r$ along
$\mathbb{S}^{n-1}$.

\item[(2)] If $f \in L^1(\Gamma )$, then
\begin{equation}
\int _\Gamma f d\sigma=
\int _{\mathbb{S} ^{n-1}} fr^{n-2}\sqrt{r^2(\widehat{x} ) +
\|\nabla _{\mathbb{S} }r(\widehat{x} ) \Vert ^2} d\widehat{\sigma} .
\end{equation}
\end{itemize}
\end{Theorem}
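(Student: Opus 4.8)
The plan is to realize the free boundary $\Gamma$ as the image of the unit sphere $\mathbb{S}^{n-1}$ under a single explicit map and then read off both assertions from Lemma~\ref{thm:yinli314}. Concretely, let $\tilde r$ denote the $0$-homogeneous extension of $r$ to $\mathbb{R}^n\setminus\{0\}$, i.e. $\tilde r(x)=r(x/\|x\|)$, and set $T(x)=\tilde r(x)\,x$. For $x=\widehat x\in\mathbb{S}^{n-1}$ this gives $T(\widehat x)=r(\widehat x)\widehat x$, so $T(\mathbb{S}^{n-1})=\Gamma$; moreover $T$ is injective along each ray and, as computed below, $\det DT=\tilde r^{\,n}>0$, so $T$ restricts to a $C^1$ diffeomorphism of a neighbourhood of $\mathbb{S}^{n-1}$, which is all that Lemma~\ref{thm:yinli314} requires (up to an arbitrary modification of $T$ away from the sphere). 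Taking $\partial\Omega=\mathbb{S}^{n-1}$, whose outward unit normal at $\widehat x$ is $\widehat x$ itself, both statements follow once $\det DT$ and ${}^*(DT)^{-1}\widehat x$ are evaluated on $\mathbb{S}^{n-1}$.

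First I would record two standard facts about the homogeneous extension: by Euler's identity $\tilde r$ of degree $0$ satisfies $x\cdot\nabla\tilde r=0$, so $\nabla\tilde r$ is tangential, and on $\mathbb{S}^{n-1}$ it reduces to the spherical gradient, $\nabla\tilde r(\widehat x)=\nabla_{\mathbb{S}}r(\widehat x)$. Differentiating $T(x)=\tilde r(x)\,x$ produces the rank-one update $DT=\tilde r\,I+x\,{}^*(\nabla\tilde r)$. The matrix determinant lemma then gives $\det DT=\tilde r^{\,n}\bigl(1+\tilde r^{-1}\,\nabla\tilde r\cdot x\bigr)=\tilde r^{\,n}$ because $\nabla\tilde r\cdot x=0$, and the Sherman--Morrison formula gives $(DT)^{-1}=\tilde r^{-1}\bigl(I-\tilde r^{-1}x\,{}^*(\nabla\tilde r)\bigr)$, whence ${}^*(DT)^{-1}=\tilde r^{-1}\bigl(I-\tilde r^{-1}(\nabla\tilde r)\,{}^*x\bigr)$. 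Evaluating on $\mathbb{S}^{n-1}$, where $\tilde r=r$, $\nabla\tilde r=\nabla_{\mathbb{S}}r$ and $x=\widehat x$ with $\widehat x\cdot\widehat x=1$, I obtain ${}^*(DT)^{-1}\widehat x=r^{-2}\bigl(r\widehat x-\nabla_{\mathbb{S}}r\bigr)$; since $\widehat x\perp\nabla_{\mathbb{S}}r$, its norm is $\|{}^*(DT)^{-1}\widehat x\|=r^{-2}\sqrt{r^2+\|\nabla_{\mathbb{S}}r\|^2}$.

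Both claims now drop out of Lemma~\ref{thm:yinli314}. For (1), the normal formula $\textbf{n}_T\circ T={}^*(DT)^{-1}\textbf{n}/\|{}^*(DT)^{-1}\textbf{n}\|$ evaluated at $\textbf{n}=\widehat x$ yields, after cancelling the factor $r^{-2}$, exactly $\textbf{n}(x)=(r\widehat x-\nabla_{\mathbb{S}}r)/\sqrt{r^2+\|\nabla_{\mathbb{S}}r\|^2}$ at $x=T(\widehat x)\in\Gamma$; the radial term $r\widehat x$ confirms this is the outward normal. For (2), the change-of-variables identity \eqref{con:yinli311eq312} multiplies the area element by $\|{}^*(DT)^{-1}\widehat x\|\,|\det DT|=r^{-2}\sqrt{r^2+\|\nabla_{\mathbb{S}}r\|^2}\cdot r^{\,n}=r^{\,n-2}\sqrt{r^2+\|\nabla_{\mathbb{S}}r\|^2}$, which is precisely the claimed Jacobian. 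The only genuinely delicate point is the bookkeeping for the homogeneous extension, namely that $\nabla\tilde r$ is purely tangential and that it restricts to $\nabla_{\mathbb{S}}r$ on the sphere with the correct scaling; everything else is the linear algebra of a rank-one perturbation of a multiple of the identity, which is routine. As a cross-check on the sign and scaling, one may verify (1) independently by writing $\Gamma$ as the zero set of $F(x)=\|x\|-\tilde r(x)$ and computing $\nabla F/\|\nabla F\|$.
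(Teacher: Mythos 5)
Your proof is correct and follows essentially the same route as the paper: both realize $\Gamma$ as the image of $\mathbb{S}^{n-1}$ under $T(\widehat{x})=r(\widehat{x})\widehat{x}$, compute $DT$ as a rank-one perturbation of a multiple of the identity, invert it via Sherman--Morrison, and read off both claims from Lemma \ref{thm:yinli314} using $\langle\nabla_{\mathbb{S}}r,\widehat{x}\rangle=0$. The only (cosmetic, and arguably cleaner) difference is that you fix the $0$-homogeneous extension of $r$ so that the radial component of $\nabla\tilde r$ vanishes identically, whereas the paper carries the term ${}^*\nabla_{\widehat{x}}r\,\widehat{x}$ along and lets it cancel at the end.
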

\begin{proof}
The assertions in this theorem are extensively used in the literature (cf. \cite{eppler2010tracking1,eppler2010tracking2}). Here we include a brief proof for the readability. 

Note that the mapping from the unit sphere $\mathbb{S}^{n-1}$ to $\Gamma$ is given by $T(\widehat{x} )=r(\widehat{x} )\widehat{x} $, and its inverse mapping is $T^{-1}(x)=\frac{x}{\|x \Vert }$. Therefore, we know that $T$ is a diffeomorphism. By the definition of the Jacobian matrix it follows that
\begin{equation}
DT(\widehat{x} )=\widehat{x}{}^{*}\nabla _{\widehat{x} }r(\widehat{x} )+r(\widehat{x} )I,
    \nonumber
\end{equation}
this implies
\begin{equation}
\begin{split}
\det DT(\widehat{x} )
&=r(\widehat{x} )^n\det\left(\frac{1}{r(\widehat{x} )}\widehat{x}{}^{*}\nabla _{\widehat{x} }r(\widehat{x} )+I\right) \\
&=r(\widehat{x} )^n\left(1+\frac{1}{r(\widehat{x} )}{}^{*}\nabla _{\widehat{x} }r(\widehat{x} )\widehat{x}\right) \\
&=r(\widehat{x} )^{n-1}(r(\widehat{x} )+{}^{*}\nabla _{\widehat{x} }r(\widehat{x} )\widehat{x}).
\end{split}
\nonumber
\end{equation}

We use the well-known Sherman-Morrison-Woodbury formula to derive the formula of $DT^{-1}$: 
\begin{equation}
    DT^{-1}(\widehat{x} )
=\frac{1}{r(\widehat{x} )}I-
\frac{\widehat{x}{}^{*}\nabla _{\widehat{x} }r(\widehat{x} ) }{r(\widehat{x} )^2+r(\widehat{x} ){}^{*}\nabla _{\widehat{x} }r(\widehat{x} )\widehat{x}}
    \nonumber
\end{equation}
and
\begin{equation}
\begin{split}
{}^{*}DT^{-1}(\widehat{x} )\widehat{x}
&=\frac{1}{r(\widehat{x} )}\widehat{x} -
\frac{\nabla _{\widehat{x} }r(\widehat{x} ) }{r(\widehat{x} )^2+r(\widehat{x} ){}^{*}\nabla _{\widehat{x} }r(\widehat{x} )\widehat{x}}\\
&=\frac{1}{r(\widehat{x} )}\widehat{x} -
\frac{\nabla_{\mathbb{S} }r(\widehat{x} )+({}^{*}\nabla_{\widehat{x} }r(\widehat{x})\widehat{x})\widehat{x}    }
{r(\widehat{x} )^2+r(\widehat{x} ){}^{*}\nabla _{\widehat{x} }r(\widehat{x} )\widehat{x}}\\
&=\frac{\widehat{x} }{r(\widehat{x} )+{}^{*}\nabla _{\widehat{x} }r(\widehat{x} )\widehat{x}}
-\frac{\nabla_{\mathbb{S} }r(\widehat{x} )}{r(\widehat{x} )(r(\widehat{x} )+{}^{*}\nabla _{\widehat{x} }r(\widehat{x} )\widehat{x})}.
\end{split}
\nonumber
\end{equation}

Since the normal vector at $\widehat{x} $ on the unit sphere is $\widehat{x} $, we have $\left\langle \nabla_{\mathbb{S} }r(\widehat{x} ),\widehat{x} \right\rangle =0$. The first assertion follows from the reduction to a common denominator. Moreover, by a direct calculation, we have\begin{equation}
|\det DT|\|{}^{*}DT^{-1}(\widehat{x} )\widehat{x}  \Vert 
=r(\widehat{x} )^{n-2}\sqrt{r(\widehat{x} )^2+\|\nabla_{\mathbb{S} }r(\widehat{x} ) \Vert ^2}.
\nonumber
\end{equation}
Inserting $|\det DT|\|{}^{*}DT^{-1}(\widehat{x} )\widehat{x}  \Vert $ into (\ref{con:yinli311eq312}), the second assertion follows.
\end{proof}

If $V$ is a velocity field induced by $dr$, then at $x \in \Gamma$ there holds
\begin{equation}\nonumber
    v_n=\left\langle dr(\widehat{x} )\widehat{x},\textbf{n}(x) \right\rangle 
    =  \frac{dr(\widehat{x} )r(\widehat{x} ) }{\sqrt{r^2(\widehat{x} ) +
    \|\nabla _{\mathbb{S} }r(\widehat{x} ) \Vert ^2}}.
\end{equation}
We denote $dJ(\Omega)[V]$ as $dJ(\Omega)[dr]$, by using Theorem \ref{thm:dl316}(2) we obtain
\begin{equation}
        dJ(\Omega)[dr]
        =\int_{\mathbb{S} ^{n-1}} drr^{n-1}
        \left\{ 
            -\frac{1}{2}p^2\mathcal{H} 
            +p\left[f-\mathcal{H} g-\frac{\partial g}{\partial \textbf{n}}\right]
            -\left(g+p\right)\frac{\partial p}{\partial \textbf{n}}
        \right\}  d\widehat{\sigma}.
\end{equation}   

Let 
\begin{equation}\nonumber
    \Omega_{\varepsilon}=\left\{
    x=\rho \widehat{x} \in \mathbb{R} ^{n}:\quad \widehat{x}\in 
    \mathbb{S} ^{n-1},\quad L(\widehat{x} )\leq \rho \leq r(\widehat{x} ) 
    +\varepsilon dr_2(\widehat{x} ) \right\} ,  
\end{equation}
then
\begin{equation}\nonumber
d^2J(\Omega)[dr_1,dr_2]=\frac{d}{d\varepsilon}\bigg|_{\varepsilon=0}\left\{dJ(\Omega_\varepsilon)[dr_1]\right\}  ,
\end{equation}
and we obtain the expression of $ dJ(\Omega_\varepsilon)[dr_1]$
\begin{equation}
    \begin{split}
        dJ(\Omega_\varepsilon)[dr_1]
        &=\int_{\mathbb{S} ^{n-1}} dr_1(\widehat{x} )r^{n-1}_\varepsilon(\widehat{x} )
\left\{
-\frac{1}{2}p_{\varepsilon}(r_\varepsilon(\widehat{x} )\widehat{x})^2\mathcal{H} _{\varepsilon}(r_\varepsilon(\widehat{x} )\widehat{x}) \right.\\
&\left.
+p_{\varepsilon}(r_\varepsilon(\widehat{x} )\widehat{x})\left[f(r_\varepsilon(\widehat{x} )\widehat{x})-\mathcal{H}_{\varepsilon}(r_\varepsilon(\widehat{x} )\widehat{x}) g(r_\varepsilon(\widehat{x} )\widehat{x})-\frac{\partial g(r_\varepsilon(\widehat{x} )\widehat{x})}{\partial \textbf{n}_{\varepsilon}}\right] \right.\\
&\left.
-\left(g(r_\varepsilon(\widehat{x} )\widehat{x})+p_{\varepsilon}(r_\varepsilon(\widehat{x} )\widehat{x})\right)\frac{\partial p_{\varepsilon}(r_\varepsilon(\widehat{x} )\widehat{x})}{\partial \textbf{n}_{\varepsilon}}
\right\} 
        d\widehat{\sigma},
    \end{split}
\end{equation} 
where $r_\varepsilon(\widehat{x} )=r(\widehat{x} )+\varepsilon dr_2(\widehat{x} )$.

By using 
\begin{equation*}
    \left(\frac{\partial w}{\partial \textbf{n}}\right)'(\Gamma;V)
    =\frac{\partial dw[dr_2]}{\partial \textbf{n}}+
    \nabla w\cdot (d\textbf{n}-\nabla^2bV)+\frac{\partial ^2w}{\partial \textbf{n}^2}v_n
\end{equation*}
and the adjoint equation (\ref{adjoint_state}), we can obtain that the shape derivative of $p$  along the velocity field induced by $dr_2$, denoted by $dp[dr_2]$, satisfies the following equation:
\begin{equation}\label{adjoint_Euler}
    \begin{cases}
        -\Delta dp[dr_2]=0 \quad {\rm in}\quad \Omega \\
        dp[dr_2]=-\frac{\partial p}{\partial \textbf{n}}v_n+\frac{\partial dw[dr_2]}{\partial \textbf{n}}+
       \nabla w\cdot (d\textbf{n}-\nabla^2bV)+\frac{\partial ^2w}{\partial \textbf{n}^2}v_n \quad {\rm on}\quad \Gamma,\\
        dp[dr_2]=0 \quad {\rm on}\quad\Sigma .
        \end{cases}
\end{equation}
Now we are ready to compute the shape Hessian. Noticing that
\begin{gather*}
\lim_{\varepsilon \to 0}  \frac{r_\varepsilon(\widehat{x} )-r(\widehat{x} )}{\varepsilon}=dr_2(\widehat{x} ), \nonumber \\
\lim_{\varepsilon \to 0}  \frac{p_\varepsilon(r_\varepsilon(\widehat{x})\widehat{x} )-p(r(\widehat{x})\widehat{x} )}{\varepsilon}=dp[dr_2]+dr_2(\widehat{x} ) \left\langle \nabla p,\widehat{x} \right\rangle , \\
\lim_{\varepsilon \to 0}  \frac{f(r_\varepsilon(\widehat{x})\widehat{x} )-f(r(\widehat{x})\widehat{x} )}{\varepsilon}=dr_2(\widehat{x} ) \left\langle \nabla f,\widehat{x} \right\rangle , \nonumber \\
\lim_{\varepsilon \to 0}  \frac{g(r_\varepsilon(\widehat{x})\widehat{x} )-g(r(\widehat{x})\widehat{x} )}{\varepsilon}=dr_2(\widehat{x} ) \left\langle \nabla g,\widehat{x} \right\rangle , \nonumber 
\end{gather*}
and 
\begin{equation*}
    \begin{split}
       \lim_{\varepsilon \to 0} \frac{(\nabla g)(r_\varepsilon(\widehat{x})\widehat{x} )-(\nabla g)(r(\widehat{x})\widehat{x} )}{\varepsilon}
    =\overset{\circ}{(\nabla g)}(\Omega;V)|_{\Gamma^*}
    =(\nabla g)'(\Omega;V)+\nabla^2gV
    =dr_2(\widehat{x})\nabla^2g\widehat{x},\\
   \lim_{\varepsilon \to 0} \frac{(\nabla p_\varepsilon)(r_\varepsilon(\widehat{x})\widehat{x} )-(\nabla p)(r(\widehat{x})\widehat{x} )}{\varepsilon}
=\overset{\circ}{(\nabla p)}(\Omega;V)|_{\Gamma}
=(\nabla p)'(\Omega;V)+\nabla^2pV
=\nabla dp[dr_2]+dr_2\nabla^2p\widehat{x},
\end{split}    
\end{equation*}
we differentiate $dJ(\Omega_\varepsilon)[dr_1]$ with respect to $\varepsilon$ to obtain 
\begin{equation}
    \begin{split}
    d^2J(\Omega)[dr_1,dr_2]
    &=\int_{\mathbb{S} ^{n-1}} (n-1)dr_1r^{n-2}
        \left\{
            -\frac{1}{2}p^2\mathcal{H} 
            +p\left[f-\mathcal{H} g-\frac{\partial g}{\partial \textbf{n}}\right]
            -\left(g+p\right)\frac{\partial p}{\partial \textbf{n}}
        \right\}  d\widehat{\sigma}\\
&+\int_{\mathbb{S} ^{n-1}} dr_1r^{n-1}
\left\{-p\mathcal{H}\left(
dp[dr_2]+dr_2\left\langle \nabla p,\widehat{x}\right\rangle
\right) -\frac{1}{2}p^2\frac{d\mathcal{H}}{d(dr_2)}
 \right\} 
d\widehat{\sigma}\\
&+\int_{\mathbb{S} ^{n-1}} dr_1r^{n-1}
\left\{
\left(dp[dr_2]+dr_2\left\langle \nabla p,\widehat{x}\right\rangle \right) 
\left[f-\mathcal{H} g-\frac{\partial g}{\partial \textbf{n}}\right]  
\right\} 
d\widehat{\sigma}\\
&+\int_{\mathbb{S} ^{n-1}} dr_1r^{n-1}
\Big\{p \Big[
dr_2\left\langle \nabla f,\widehat{x}\right\rangle 
-\frac{d\mathcal{H}}{d(dr_2)}g-\mathcal{H}dr_2
\left\langle \nabla g,\widehat{x}\right\rangle \\
&-\left(dr_2\left\langle \nabla^2g \widehat{x},\textbf{n}\right\rangle
+\left\langle \nabla g,d\textbf{n}[dr_2]\right\rangle 
\right) 
\Big]
\Big\} 
d\widehat{\sigma}\\
&-\int_{\mathbb{S} ^{n-1}} dr_1r^{n-1}
\left\{
    \frac{\partial p}{\partial \textbf{n}}
    \left(dr_2\left\langle \nabla g,\widehat{x}\right\rangle 
+dp[dr_2]+dr_2\left\langle \nabla p,\widehat{x}\right\rangle \right) \right.\\
&\left.
+\left(g+p\right) \left(
    \frac{\partial dp[dr_2]}{\partial \textbf{n}}
    +dr_2\left\langle \nabla^2p\widehat{x},\textbf{n}\right\rangle 
+\left\langle \nabla p,d\textbf{n}[dr_2]\right\rangle 
    \right) 
\right\} 
d\widehat{\sigma},
\end{split}
\end{equation}
where $\frac{d\mathcal{H}}{d(dr_2)}$ represents the material derivative of the additive curvature $\mathcal{H}$.

\begin{Corollary}
    The following identity holds:
    \begin{equation}
        \begin{split}
    d^2J(\Omega^*)[dr_1,dr_2]
    &=\int_{\mathbb{S} ^{n-1}}dr_1r^{*(n-1)}
    \left\{
        dp[dr_2]\left[f-\mathcal{H}  g-\frac{\partial g}{\partial \textbf{n}}\right]   
    -g\frac{\partial dp[dr_2]}{\partial \textbf{n}}
    \right\} d\widehat{\sigma}.
        \end{split}
    \end{equation}
\end{Corollary}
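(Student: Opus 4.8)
The plan is to exploit the fact, already recorded in the Corollary following the Euler-derivative computation, that the adjoint state vanishes identically at the optimum, i.e. $p(\Omega^*)\equiv 0$ in $\Omega^*$. The reasoning is that at $\Omega^*$ the true solution satisfies $u=0$ on $\Gamma^*$, so the auxiliary state $w$ (harmonic, with $w=u$ on $\Gamma$ and $w=0$ on $\Sigma$) has vanishing boundary data and hence $w\equiv 0$ in $\Omega^*$; therefore $\partial w/\partial\mathbf{n}=0$ on $\Gamma^*$, which forces $p$ (harmonic, with $p=\partial w/\partial\mathbf{n}$ on $\Gamma^*$ and $p=0$ on $\Sigma$) to vanish identically. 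This single fact drives the entire simplification.

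First I would record the immediate pointwise consequences of $p\equiv 0$ on $\Gamma^*$: namely $p=0$, $\nabla p=\mathbf{0}$, $\partial p/\partial\mathbf{n}=0$, and $\nabla^2 p=0$, so in particular $\langle\nabla p,\widehat{x}\rangle=0$, $\langle\nabla^2 p\,\widehat{x},\mathbf{n}\rangle=0$, and $\langle\nabla p,d\mathbf{n}[dr_2]\rangle=0$. The one quantity that does \emph{not} vanish is the shape derivative $dp[dr_2]$: although $p$ is momentarily identically zero, the adjoint state on a nearby perturbed domain is not, so $dp[dr_2]$, governed by (\ref{adjoint_Euler}), remains nontrivial (there its boundary datum reduces to $\partial dw[dr_2]/\partial\mathbf{n}$ once the $p$- and $w$-dependent terms are dropped). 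Keeping this distinction in mind is the only real subtlety of the argument.

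Then I would substitute $p\equiv 0$, together with all the vanishing derivatives above, directly into the general five-integral expression for $d^2J(\Omega)[dr_1,dr_2]$ derived just before the statement, and proceed term by term. The first two integrals consist entirely of monomials carrying a factor $p$, $p^2$, or $\partial p/\partial\mathbf{n}$ and therefore drop out; in the third integral the summand $dr_2\langle\nabla p,\widehat{x}\rangle$ dies, leaving only $dp[dr_2]\,[f-\mathcal{H} g-\partial g/\partial\mathbf{n}]$; the fourth integral carries an overall factor $p$ and vanishes; and in the fifth integral the $\partial p/\partial\mathbf{n}$ prefactor annihilates its first group, while in the second group $(g+p)$ collapses to $g$ and the two terms $dr_2\langle\nabla^2 p\,\widehat{x},\mathbf{n}\rangle$ and $\langle\nabla p,d\mathbf{n}[dr_2]\rangle$ vanish, leaving only $-g\,\partial dp[dr_2]/\partial\mathbf{n}$.

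Collecting the two survivors and writing $r^*$ for the optimal parametrization yields exactly
\begin{equation*}
d^2J(\Omega^*)[dr_1,dr_2]=\int_{\mathbb{S}^{n-1}}dr_1\,r^{*(n-1)}\left\{dp[dr_2]\left[f-\mathcal{H} g-\frac{\partial g}{\partial\mathbf{n}}\right]-g\,\frac{\partial dp[dr_2]}{\partial\mathbf{n}}\right\}d\widehat{\sigma},
\end{equation*}
which is the asserted identity. There is no analytic difficulty beyond this bookkeeping; the entire content is the vanishing of $p$ at the optimum, so I expect the hard part to be purely clerical, namely verifying that no surviving summand secretly hides a factor of $p$ or one of its spatial derivatives, and in particular not collapsing $dp[dr_2]$ to zero along with $p$.
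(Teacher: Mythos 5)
Your proposal is correct and follows exactly the paper's route: the paper's proof is the one-line observation that $p(\Omega^*)\equiv 0$ (since $u=0$ on $\Gamma^*$ forces $w\equiv 0$ and hence $p\equiv 0$), after which the identity follows by substituting into the five-integral expression for $d^2J(\Omega)[dr_1,dr_2]$. Your term-by-term bookkeeping, including the key point that $dp[dr_2]$ does \emph{not} vanish even though $p$ does, is precisely the substitution the paper leaves implicit.
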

\begin{proof}
    It is easy to know that $p(\Omega^*)\equiv 0$ in $\Omega^*$, so we can get the identity by inserting $p$ into the expression of $d^2J(\Omega^*)[dr_1,dr_2]$.
\end{proof}

Since the expression of $d^2J(\Omega)[dr_1,dr_2]$
contains $\frac{d\mathcal{H}}{d(dr_2)}$,
it is obvious that $dJ(\Omega)[\cdot,\cdot]$
is a continuous bilinear functional on $H^1(\Gamma) \times H^1(\Gamma)$ (cf. \cite{eppler2010tracking1,eppler2010tracking2,eppler2012kohn,eppler2000optimal,eppler2007convergence}), and the second-order Taylor remainder is 
\begin{equation*}
    \left\lvert R_2(J(\Omega),dr) \right\rvert 
    =o(\left\lVert dr\right\rVert _{C^{3,\alpha}(\Gamma)})
    \left\lVert dr\right\rVert _{H^1(\Gamma)}^2.
\end{equation*}
Therefore, by using Theorem \ref{thm:dl24},
$\Omega^*$ is a strictly local minimizer if
\begin{equation*}
    d^2J(\Omega^*)[dr,dr]\gtrsim \left\lVert dr\right\rVert _{H^1(\Gamma^*)}^2.
\end{equation*}
Moreover, the coercivity of the shape Hessian also ensures the existence of local optimal solutions for the nonlinear Ritz-Galerkin approximation method and its convergence according to Theorem \ref{thm:DL214}.


To prove the coervivity of the shape Hessian, we first search for an operator representation following the ideas of \cite{eppler2010tracking1,eppler2010tracking2}. 
\begin{Lemma}[\cite{eppler2010tracking1,eppler2010tracking2}]
Let $\textbf{n}$ be the unit normal vector of $\Gamma^*$, then the operator
\begin{equation}
M(dr)\triangleq g\left\langle dr\widehat{x},\textbf{n} \right\rangle 
=\frac{gr^*}{\sqrt{r^{*2}+\|\nabla_{\mathbb{S} }r^{*} \Vert^2 }}dr
\nonumber
\end{equation}
is a continuous and bijective mapping from $H^s(\Gamma^*)$ to $H^s(\Gamma^*)$ for all $s\in [0,1]$.
\end{Lemma}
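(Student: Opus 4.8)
The plan is to notice that the lemma's own displayed formula already exhibits $M$ as pointwise multiplication by a scalar: using Theorem~\ref{thm:dl316}(1) together with $\langle \widehat{x},\nabla_{\mathbb{S}}r^*\rangle=0$, the operator $M$ coincides with multiplication by
\[
m(\widehat{x})\triangleq\frac{g(r^*(\widehat{x})\widehat{x})\,r^*(\widehat{x})}{\sqrt{r^{*2}(\widehat{x})+\|\nabla_{\mathbb{S}}r^*(\widehat{x})\|^2}},\qquad \widehat{x}\in\mathbb{S}^{n-1}.
\]
Thus the entire statement reduces to showing that multiplication by $m$ is a topological isomorphism of $H^s(\Gamma^*)$ for every $s\in[0,1]$. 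I would carry this out in three steps: (i) bound $m$ above and below by positive constants; (ii) show $m$ is regular enough that multiplication by it preserves both $L^2$ and $H^1$; (iii) conclude for intermediate $s$ by interpolation and read off bijectivity from the explicit reciprocal multiplier.

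For step (i), since $\mathbb{S}^{n-1}$ is compact and $g>0$, $r^*>0$ are continuous, both attain positive minima and finite maxima; moreover the denominator satisfies $r^*\le\sqrt{r^{*2}+\|\nabla_{\mathbb{S}}r^*\|^2}\le\sqrt{(\max r^*)^2+\|\nabla_{\mathbb{S}}r^*\|_\infty^2}$, so it too is bounded above and away from $0$. Hence there exist constants $0<c_0\le C_0<\infty$ with $c_0\le m\le C_0$ on $\mathbb{S}^{n-1}$, and in particular $1/m$ is well defined and bounded. For step (ii), because $r^*\in X=C^{3,\alpha}(\mathbb{S}^{n-1})$ we have $\nabla_{\mathbb{S}}r^*\in C^{2,\alpha}$, and since the radicand is bounded away from $0$ the square root is smooth there; combined with the smoothness of $g$ this gives $m\in C^{2,\alpha}(\mathbb{S}^{n-1})\subset W^{1,\infty}$, and likewise $1/m\in W^{1,\infty}$. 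Consequently, for $s=0$ one has $\|M(dr)\|_{L^2}\le\|m\|_{L^\infty}\|dr\|_{L^2}$, while for $s=1$ the Leibniz rule $\nabla_{\Gamma^*}(m\,dr)=(\nabla_{\Gamma^*}m)\,dr+m\,\nabla_{\Gamma^*}dr$ together with $m,\nabla_{\Gamma^*}m\in L^\infty$ yields $\|M(dr)\|_{H^1}\lesssim\|m\|_{W^{1,\infty}}\|dr\|_{H^1}$. Here I use that the $C^{3,\alpha}$ diffeomorphism $\widehat{x}\mapsto r^*(\widehat{x})\widehat{x}$ identifies $H^s(\Gamma^*)$ with $H^s(\mathbb{S}^{n-1})$ for $s\in[0,1]$, so the computation may equivalently be performed on the sphere.

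Finally, in step (iii), boundedness of $M$ on $L^2(\Gamma^*)$ and on $H^1(\Gamma^*)$ gives boundedness on $H^s(\Gamma^*)$ for all $s\in[0,1]$ by interpolation (using $H^s=[L^2,H^1]_s$). The same three estimates applied to $1/m$ show that $dr\mapsto(1/m)\,dr$ is bounded on each $H^s(\Gamma^*)$, and since it is a two-sided inverse of $M$, the operator $M$ is bijective with bounded inverse, which is the claim. I expect the only genuinely substantive point to be the uniform positivity of $m$ in step~(i): the lower bound rests on the hypothesis $g>0$ and on the domain being star-shaped so that $r^*>0$, which together guarantee that the multiplier never degenerates; everything else is the routine fact that multiplication by a positive $W^{1,\infty}$ function is an isomorphism on the Sobolev scale $H^s$, $s\in[0,1]$.
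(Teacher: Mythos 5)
The paper states this lemma without proof, importing it from Eppler and Harbrecht \cite{eppler2010tracking1,eppler2010tracking2}, so there is no in-paper argument to compare against. Your proof is correct and is exactly the standard one: by Theorem \ref{thm:dl316}(1) and $\langle\widehat{x},\nabla_{\mathbb{S}}r^*\rangle=0$, the operator $M$ is multiplication by a scalar function $m$ that is bounded above and below by positive constants (since $g>0$, $r^*>0$ on the compact sphere) and lies in $W^{1,\infty}$ (since $r^*\in C^{3,\alpha}$), whence boundedness on $L^2$ and $H^1$, interpolation for intermediate $s$, and the same estimates for $1/m$ yield the claimed isomorphism.
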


\begin{Definition}[\cite{eppler2010tracking1,eppler2010tracking2}]
Let $g(x)>0$ for all $x\in \Gamma^*$. We introduce the operator $\mathcal{A}:H^s(\Gamma^*)\to H^s(\Gamma^*)$, $s\in [0,1]$ defined by
    \begin{equation}
(\mathcal{A}u)(x) \triangleq A(x)u(x)
= \left\{\mathcal{H}(x)+\left[\frac{\partial g(x)}{\partial \textbf{n}}
-f(x)\right] \bigg/g(x)\right\} u(x).
\nonumber
    \end{equation} 
\end{Definition}

\begin{Definition}We define the 
{\rm {Dirichlet-to-Neumann}} mapping 
 $\Lambda: H^{1/2}(\Gamma^*) \to H^{-1/2}(\Gamma^*)$ as $\Lambda(v)\triangleq\frac{\partial w}{\partial \textbf{n}} \big|_{\Gamma^*}$,
where $w \in H^{1}(\Omega^*)$ satisfies the following equation:
\begin{equation}
    \begin{cases}
        -\Delta w=0\quad in\quad \Omega^* ,\\
        w=v\quad on\quad \Gamma^* ,\\
        w=0\quad on\quad\Sigma .
        \end{cases} 
    \nonumber
\end{equation}
\end{Definition} 
\begin{Lemma}[\cite{eppler2010tracking2}]
The mapping $\Lambda$ is $H^{1/2}(\Gamma^*)$-$coercive$, and its inverse is
the {\rm {Neumann-to-Dirichlet}} mapping $\Upsilon =\Lambda^{-1}$,
i.e., $\Upsilon: H^{-1/2}(\Gamma^*) \to H^{1/2}(\Gamma^*)$ defined by $\Upsilon (v):=w|_{\Gamma^*}$,
where $w \in H^{1}(\Omega^*)$ satisfies the following equation:
\begin{equation}
    \begin{cases}
        -\Delta w=0\quad {\rm in}\quad \Omega^* ,\\
        \frac{\partial w}{\partial \textbf{n}}=v\quad {\rm on}\quad \Gamma^* ,\\
        w=0\quad {\rm on}\quad\Sigma .
        \end{cases} 
    \nonumber
\end{equation}
\end{Lemma}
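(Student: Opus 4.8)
The plan is to prove the two assertions in turn: establish $H^{1/2}(\Gamma^*)$-coercivity of $\Lambda$ through the Dirichlet energy identity, then deduce bijectivity from the Lax--Milgram theorem, and finally identify the inverse by a uniqueness argument. First I would record the basic energy identity. Given $v\in H^{1/2}(\Gamma^*)$, let $w\in H^1(\Omega^*)$ be the harmonic function with $w=v$ on $\Gamma^*$ and $w=0$ on $\Sigma$, so that $\Lambda v=\frac{\partial w}{\partial \textbf{n}}\big|_{\Gamma^*}$. Applying Green's formula and using $\Delta w=0$ together with $w=0$ on $\Sigma$ gives
\begin{equation*}
\langle \Lambda v, v\rangle_{\Gamma^*}
=\int_{\Gamma^*}\frac{\partial w}{\partial \textbf{n}}\,v\,d\sigma
=\int_{\Omega^*}|\nabla w|^2\,dx.
\end{equation*}
The same computation with a second harmonic extension $\Phi$ of an arbitrary $\phi\in H^{1/2}(\Gamma^*)$ shows that the associated bilinear form equals $a(v,\phi)=\int_{\Omega^*}\nabla w\cdot\nabla\Phi\,dx$, which is manifestly symmetric; Cauchy--Schwarz together with the continuity of the harmonic extension operator $H^{1/2}(\Gamma^*)\to H^1(\Omega^*)$ then yields the boundedness of $\Lambda:H^{1/2}(\Gamma^*)\to H^{-1/2}(\Gamma^*)$.

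The coercivity is the heart of the matter, and here the homogeneous condition on the fixed boundary $\Sigma$ is essential. Since $w$ vanishes on $\Sigma$, which carries positive surface measure, the Poincar\'e inequality holds on the subspace of $H^1(\Omega^*)$ consisting of functions vanishing on $\Sigma$, giving $\|w\|_{H^1(\Omega^*)}\le C\|\nabla w\|_{L^2(\Omega^*)}$. Combining this with the trace inequality $\|v\|_{H^{1/2}(\Gamma^*)}=\|w|_{\Gamma^*}\|_{H^{1/2}(\Gamma^*)}\le C\|w\|_{H^1(\Omega^*)}$ and the energy identity above, I obtain
\begin{equation*}
\|v\|_{H^{1/2}(\Gamma^*)}^2\le C\,\|\nabla w\|_{L^2(\Omega^*)}^2
=C\,\langle\Lambda v,v\rangle_{\Gamma^*},
\end{equation*}
which is exactly $H^{1/2}(\Gamma^*)$-coercivity. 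I would emphasize that the grounding $w=0$ on $\Sigma$ removes the constant kernel that would otherwise obstruct the Poincar\'e step; without it the Dirichlet form degenerates on constants and coercivity fails, so this is the step that genuinely exploits the geometry of the problem and is the main obstacle to watch.

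Finally, since $a(\cdot,\cdot)$ is bounded and coercive on $H^{1/2}(\Gamma^*)$, the Lax--Milgram theorem makes $\Lambda$ a bijection onto $H^{-1/2}(\Gamma^*)$. To identify the inverse, I observe that the very same harmonic function $w$ attached to $v$ also solves the mixed problem $-\Delta w=0$ in $\Omega^*$, $\frac{\partial w}{\partial \textbf{n}}=\Lambda v$ on $\Gamma^*$, $w=0$ on $\Sigma$; this Neumann/Dirichlet problem is well-posed by Lax--Milgram on the space of $H^1(\Omega^*)$ functions vanishing on $\Sigma$ --- no compatibility condition is needed, precisely because of the Dirichlet datum on $\Sigma$ --- and its unique solution has trace $w|_{\Gamma^*}=v$. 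Hence $\Upsilon(\Lambda v)=v$, and the symmetric argument starting from Neumann data gives $\Lambda(\Upsilon v)=v$, so $\Upsilon=\Lambda^{-1}$. The one technical point demanding care throughout is the rigorous interpretation of $\frac{\partial w}{\partial \textbf{n}}$ as an element of $H^{-1/2}(\Gamma^*)$ and of the pairing $\langle\cdot,\cdot\rangle_{\Gamma^*}$, which is justified by the standard normal-trace theory for $H^1$ functions with vanishing Laplacian.
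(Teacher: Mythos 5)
The paper states this lemma without proof, attributing it to the cited reference \cite{eppler2010tracking2}, so there is no in-paper argument to compare against; your proof is correct and is the standard one: the Green's-formula energy identity, the Poincar\'e inequality on the subspace of $H^1(\Omega^*)$ functions vanishing on $\Sigma$ combined with the trace theorem for coercivity, Lax--Milgram for bijectivity, and uniqueness of the mixed boundary value problem to identify $\Upsilon=\Lambda^{-1}$. You correctly isolate the two points that actually matter here --- the grounding $w=0$ on $\Sigma$ (which kills the constant kernel and the Neumann compatibility condition) and the interpretation of $\frac{\partial w}{\partial \textbf{n}}$ as an element of $H^{-1/2}(\Gamma^*)$ via the variational normal trace --- so the argument is complete.
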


Now we are ready to rewrite $d^2J(\Omega^*)[dr_1,dr_2]$ into an operator form. 
\begin{Theorem}
The following identity holds:
\begin{equation}
d^2J(\Omega^*)[dr_1,dr_2]=
\left\langle(\mathcal{A}+\Lambda)(M(dr_1)),(\mathcal{A}+\Lambda)(M(dr_2)) \right\rangle _{L^2(\Gamma^*)}\quad \forall dr_1,dr_2 \in X.
\end{equation}
\end{Theorem}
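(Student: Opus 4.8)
The plan is to start from the simplified Hessian expression in the preceding Corollary and to replace every trace of a shape derivative by the operators $M$, $\mathcal{A}$, $\Lambda$ and $\Upsilon$. The decisive preliminary observation is that at the true free boundary $u=0$ on $\Gamma^*$, so the state $w$ in (\ref{con:equ1d6}) is harmonic with zero Dirichlet data on both $\Gamma^*$ and $\Sigma$; hence $w\equiv 0$ in $\Omega^*$, and consequently $p\equiv 0$ in $\Omega^*$ as well. This annihilates $\nabla w$, $\partial^2 w/\partial\textbf{n}^2$, $\partial p/\partial\textbf{n}$ and $\nabla_{\Gamma}u$ everywhere they appear in Lemmas \ref{thm:yinli10}, \ref{thm:yinli11} and in the adjoint derivative equation (\ref{adjoint_Euler}), leaving only a short chain of boundary identities to process.

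First I would convert the spherical integral of the Corollary into a boundary integral over $\Gamma^*$. Using Theorem \ref{thm:dl316}(2) together with $v_n=dr\,r^*/\sqrt{r^{*2}+\|\nabla_{\mathbb{S}}r^*\|^2}$, one checks the element identity $dr_1 r^{*(n-1)}\,d\widehat{\sigma}=v_n^{(1)}\,d\sigma$, where $v_n^{(i)}$ denotes the normal velocity induced by $dr_i$. Writing $\phi_i:=M(dr_i)=g\,v_n^{(i)}$ and using that the multiplier of $\mathcal{A}$ satisfies $-gA=f-\mathcal{H}g-\partial g/\partial\textbf{n}$, the Corollary becomes
\begin{equation*}
d^2J(\Omega^*)[dr_1,dr_2]=-\int_{\Gamma^*}\Big\{A\phi_1\, dp[dr_2]+\phi_1\,\tfrac{\partial dp[dr_2]}{\partial\textbf{n}}\Big\}\,d\sigma .
\end{equation*}

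The core of the proof is then to show $dp[dr_2]\big|_{\Gamma^*}=-(\mathcal{A}+\Lambda)\phi_2$ and $\partial dp[dr_2]/\partial\textbf{n}=-\Lambda(\mathcal{A}+\Lambda)\phi_2$. I would obtain these by walking up the chain $du\to dw\to dp$ with the simplifications above. From Lemma \ref{thm:yinli10}, $\partial du/\partial\textbf{n}=v_n^{(2)}[f-\mathcal{H}g-\partial g/\partial\textbf{n}]=-\mathcal{A}\phi_2$, so $du=-\Upsilon(\mathcal{A}\phi_2)$. From Lemma \ref{thm:yinli11}, $dw\big|_{\Gamma^*}=du-\phi_2$, whence, applying the Dirichlet-to-Neumann map and using $\Lambda\Upsilon=I$, $\partial dw/\partial\textbf{n}=\Lambda(du-\phi_2)=-\mathcal{A}\phi_2-\Lambda\phi_2=-(\mathcal{A}+\Lambda)\phi_2$. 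Finally (\ref{adjoint_Euler}) collapses to $dp[dr_2]\big|_{\Gamma^*}=\partial dw/\partial\textbf{n}$, which gives the first identity, and applying $\Lambda$ once more gives the second.

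Substituting these two identities and recalling that $A$ acts by pointwise multiplication ($A\phi_1=\mathcal{A}\phi_1$) turns the integral into
\begin{equation*}
\langle\mathcal{A}\phi_1,(\mathcal{A}+\Lambda)\phi_2\rangle_{L^2(\Gamma^*)}+\langle\phi_1,\Lambda(\mathcal{A}+\Lambda)\phi_2\rangle_{L^2(\Gamma^*)} .
\end{equation*}
The last step, the one I expect to be the genuine content rather than bookkeeping, is the self-adjointness of $\Lambda$ on $L^2(\Gamma^*)$: by Green's identity for two harmonic functions vanishing on $\Sigma$ one has $\langle\phi_1,\Lambda\psi\rangle=\langle\Lambda\phi_1,\psi\rangle$, which moves $\Lambda$ onto the first slot and lets me complete the square to $\langle(\mathcal{A}+\Lambda)\phi_1,(\mathcal{A}+\Lambda)\phi_2\rangle$. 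Substituting $\phi_i=M(dr_i)$ yields the claimed formula. The main obstacle is purely in the careful tracking of traces and normal derivatives through $du,dw,dp$, in particular getting every sign right in the cancellation $\Lambda\Upsilon=I$, since the coercivity argument to follow hinges on the exact symmetric structure $(\mathcal{A}+\Lambda)M$.
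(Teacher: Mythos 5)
Your proposal is correct and follows essentially the same route as the paper: exploit $w\equiv 0$ and $p\equiv 0$ at $\Omega^*$, push the chain $du\to dw\to dp$ through the operators $M$, $\mathcal{A}$, $\Upsilon$, $\Lambda$ to get $dp[dr_2]=-(\mathcal{A}+\Lambda)(M(dr_2))$ and $\partial dp[dr_2]/\partial\textbf{n}=-\Lambda(\mathcal{A}+\Lambda)(M(dr_2))$, and then use the self-adjointness of $\Lambda$ (which the paper realizes via Green's formula with an auxiliary harmonic function equal to $M(dr_1)$ on $\Gamma^*$) to complete the square. The only cosmetic difference is that you convert the spherical integral to a $\Gamma^*$-integral and factor out $-gA$ before substituting, whereas the paper substitutes first; the content is identical.
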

\begin{proof}
Let $dp[dr_2]$ be the solution to (\ref{adjoint_Euler}) with $\Omega$ and $\Gamma$ replaced by $\Omega^*$ and $\Gamma^*$. 
Recalling that on $\Gamma^*$ there holds
\begin{equation}
\begin{split}
    \frac{\partial du[dr_2]}{\partial \textbf{n}}=
    \left\langle dr_2\widehat{x},\textbf{n} \right\rangle \left[f-\mathcal{H}  g-\frac{\partial g}{\partial \textbf{n}}\right]
    =-\mathcal{A}(M(dr_2)),
\end{split}
\nonumber
\end{equation}
we have
\begin{equation}
du[dr_2]=-\Upsilon (\mathcal{A}(M(dr_2))).
    \nonumber
\end{equation}
By using Lemma \ref{thm:yinli11} and the fact that $w|_{\Gamma^*}=0$ on $\Gamma ^*$, we have
\begin{equation*}
\begin{split}
    dw[dr_2]=du[dr_2]-g    \left\langle dr_2\widehat{x},\textbf{n}\right\rangle
    =-\Upsilon (\mathcal{A}(M(dr_2)))-M(dr_2),
\end{split}
\end{equation*}
this implies
\begin{equation*}
    \frac{\partial dw[dr_2]}{\partial \textbf{n}}
=\Lambda (-\Upsilon (\mathcal{A}(M(dr_2)))-M(dr_2))
=-\mathcal{A} (M(dr_2))-\Lambda (M(dr_2)).
\end{equation*}
Moreover, we can obtain 
\begin{equation*}
    dp[dr_2]=\frac{\partial dw[dr_2]}{\partial \textbf{n}}
    =-\mathcal{A} (M(dr_2))-\Lambda (M(dr_2))
\end{equation*}
and 
\begin{equation*}
    \frac{\partial dp[dr_2]}{\partial \textbf{n}} 
    =-\Lambda (\mathcal{A} (M(dr_2))+\Lambda (M(dr_2))).
\end{equation*}
Let $v$ satisfy
\begin{equation*}
    \begin{cases}
        -\Delta v=0 \quad {\rm in}\quad \Omega^*, \\
        v=M(dr_1) \quad {\rm on}\quad \Gamma^* ,\\
        v=0 \quad {\rm on}\quad\Sigma ,
        \end{cases}
\end{equation*}
by using Green's formula we have
\begin{equation*}
    \begin{split}
        \int_{\Gamma^*}M(dr_1)\frac{\partial dp[dr_2]}{\partial \textbf{n}} d\sigma
        &= \int_{\partial \Omega^*}\frac{\partial v}{\partial \textbf{n}}dp[dr_2] d\sigma
=\int_{\Gamma^*}\Lambda(M(dr_1))dp[dr_2] d\sigma.
    \end{split}
\end{equation*}
Combing the above computations, we have
\begin{equation*}
    \begin{split}
d^2J(\Omega^*)[dr_1,dr_2]
&=\int_{\mathbb{S} ^{n-1}}dr_1r^{*(n-1)}
\left\{
    dp[dr_2]\left[f-\mathcal{H}  g-\frac{\partial g}{\partial \textbf{n}}\right]   
-g\frac{\partial dp[dr_2]}{\partial \textbf{n}}
\right\} d\widehat{\sigma}\\
&= \int_{\Gamma^*}\left\langle dr_1 \widehat{x},\textbf{n}(x) \right\rangle 
dp[dr_2]\left[f-\mathcal{H}  g-\frac{\partial g}{\partial \textbf{n}}\right]  d\sigma
-\int_{\Gamma^*}M(dr_1)\frac{\partial dp[dr_2]}{\partial \textbf{n}}d\sigma\\
&= \int_{\Gamma^*}g\left\langle dr_1 \widehat{x},\textbf{n}(x) \right\rangle 
dp[dr_2]\left[f-\mathcal{H}  g-\frac{\partial g}{\partial \textbf{n}}\right]\bigg/g  d\sigma
-\int_{\Gamma^*}M(dr_1)\frac{\partial dp[dr_2]}{\partial \textbf{n}}d\sigma\\    
&=-\int_{\Gamma^*} \mathcal{A} (M(dr_1))dp[dr_2]d\sigma
-\int_{\Gamma^*}\Lambda(M(dr_1))dp[dr_2] d{\sigma}\\
&=\left\langle \mathcal{A} (M(dr_1)),\mathcal{A} (M(dr_2))+\Lambda (M(dr_2))\right\rangle _{L^2(\Gamma^*)}\\
&\quad+\left\langle \Lambda (M(dr_1)),\mathcal{A} (M(dr_2))+\Lambda (M(dr_2))\right\rangle _{L^2(\Gamma^*)}\\
&=\left\langle(\mathcal{A}+\Lambda)(M(dr_1)),(\mathcal{A}+\Lambda)(M(dr_2)) \right\rangle _{L^2(\Gamma^*)}.
\end{split}
\end{equation*}
This finishes the proof.
\end{proof}

In the following we will prove the main result of this section, i.e., the coercivity of the shape Hessian. 
\begin{Theorem}\label{thm:dl321}
If
\begin{equation}
    A(x)=\mathcal{H} (x)+\left[\frac{\partial g(x)}{\partial \textbf{n}}
    -f(x)\right] \bigg/g(x)\geqslant 0\quad\forall x \in \Gamma^*
    \nonumber
\end{equation}
and $A \in L^{\infty}(\Gamma^*)$, then
\begin{equation}
d^2J(\Omega^*)[dr,dr]\gtrsim \|dr \Vert ^2_{H^{1}(\Gamma^*)}.
\end{equation}
\end{Theorem}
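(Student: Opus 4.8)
The plan is to feed $dr_1=dr_2=dr$ into the operator representation just established, giving
\[
d^2J(\Omega^*)[dr,dr]=\big\|(\mathcal A+\Lambda)(M(dr))\big\|_{L^2(\Gamma^*)}^2,
\]
and then to argue that the order-one operator $\Lambda$ promotes this $L^2$ norm into full $H^1$ control of $dr$. Since $M$ is a continuous bijection of $H^1(\Gamma^*)$ onto itself, the bounded-inverse theorem yields $\|M(dr)\|_{H^1(\Gamma^*)}\simeq\|dr\|_{H^1(\Gamma^*)}$, so it suffices to prove that $\|(\mathcal A+\Lambda)\phi\|_{L^2(\Gamma^*)}^2\gtrsim\|\phi\|_{H^1(\Gamma^*)}^2$ for every $\phi\in H^1(\Gamma^*)$, applied to $\phi=M(dr)$. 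Writing $F:=(\mathcal A+\Lambda)\phi$, the whole problem is to bound $\|\phi\|_{H^1}$ by $\|F\|_{L^2}$.

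First I would record the ellipticity of the Dirichlet-to-Neumann map: on the smooth compact boundary $\Gamma^*$ the operator $\Lambda$ is an elliptic pseudodifferential operator of order one, whose principal symbol $\|\xi\|$ never vanishes, so it satisfies the elliptic a priori estimate $\|\phi\|_{H^1(\Gamma^*)}\lesssim\|\Lambda\phi\|_{L^2(\Gamma^*)}+\|\phi\|_{L^2(\Gamma^*)}$; that is, the full $L^2$ norm of $\Lambda\phi$ controls $\phi$ in $H^1$ up to a lower-order remainder. Because $\mathcal A$ is multiplication by $A\in L^\infty(\Gamma^*)$ and hence bounded on $L^2$, the triangle inequality gives $\|\Lambda\phi\|_{L^2}\le\|F\|_{L^2}+\|A\|_{L^\infty}\|\phi\|_{L^2}$, and combining this with the elliptic estimate produces
\[
\|\phi\|_{H^1(\Gamma^*)}^2\lesssim\|F\|_{L^2(\Gamma^*)}^2+\|\phi\|_{L^2(\Gamma^*)}^2.
\]

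It then remains only to absorb the lower-order term $\|\phi\|_{L^2}^2$, and this is where the sign hypotheses enter. The assumption $A\ge 0$ gives $\langle\mathcal A\phi,\phi\rangle_{L^2}=\int_{\Gamma^*}A\phi^2\,d\sigma\ge 0$, while the $H^{1/2}(\Gamma^*)$-coercivity of $\Lambda$ recorded above yields $\langle\Lambda\phi,\phi\rangle_{L^2}\gtrsim\|\phi\|_{H^{1/2}(\Gamma^*)}^2\ge\|\phi\|_{L^2(\Gamma^*)}^2$. Adding the two, $\langle F,\phi\rangle_{L^2}=\langle(\mathcal A+\Lambda)\phi,\phi\rangle_{L^2}\gtrsim\|\phi\|_{L^2}^2$, and Cauchy–Schwarz converts this into $\|\phi\|_{L^2}\lesssim\|F\|_{L^2}$. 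Feeding this back into the previous display gives $\|\phi\|_{H^1}^2\lesssim\|F\|_{L^2}^2=\|(\mathcal A+\Lambda)\phi\|_{L^2}^2$, and undoing the equivalence $\|\phi\|_{H^1}\simeq\|dr\|_{H^1}$ furnished by $M$ completes the argument.

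The step I expect to be the crux is the elliptic estimate for $\Lambda$: it is precisely the fact that taking the $L^2$ norm of the order-one quantity $\Lambda\phi$ returns $H^1$ control on $\phi$ that realizes the advertised ``half an order higher'' gain and distinguishes the new functional from $J_2$, whose Hessian only dominated $\|dr\|_{L^2}$. The hypotheses $A\ge 0$ and $A\in L^\infty(\Gamma^*)$ are used solely to prevent $\mathcal A$ from spoiling either the ellipticity-based bound or the $L^2$-coercivity, so the nonnegativity of the geometric quantity $A=\mathcal H+\big(\tfrac{\partial g}{\partial\textbf{n}}-f\big)/g$ is exactly the condition that makes the absorption of $\|\phi\|_{L^2}$ succeed.
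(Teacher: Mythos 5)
Your argument is correct, and it reaches the same conclusion as the paper by a genuinely different route. The paper also starts from the identity $d^2J(\Omega^*)[dr,dr]=\lVert(\mathcal A+\Lambda)(M(dr))\rVert_{L^2(\Gamma^*)}^2$ and the equivalence $\lVert M(dr)\rVert_{H^1(\Gamma^*)}\sim\lVert dr\rVert_{H^1(\Gamma^*)}$, but it then works in the domain rather than on the boundary: writing $w^*$ for the harmonic extension of $M(dr)$ vanishing on $\Sigma$, it uses the trace theorem to get $\lVert w^*\rVert_{H^{3/2}(\Omega^*)}\gtrsim\lVert M(dr)\rVert_{H^1(\Gamma^*)}$, and then the well-posedness and $H^{3/2}$-regularity of the Robin problem $\partial_{\textbf{n}}w^*+Aw^*=v$ on $\Gamma^*$ (which is where $A\geq 0$ and $A\in L^\infty$ enter, as they make the Robin form coercive) to get $\lVert w^*\rVert_{H^{3/2}(\Omega^*)}\lesssim\lVert v\rVert_{L^2(\Gamma^*)}=\lVert(\mathcal A+\Lambda)(M(dr))\rVert_{L^2(\Gamma^*)}$. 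You instead stay entirely on $\Gamma^*$, invoking the ellipticity of $\Lambda$ as a first-order pseudodifferential operator for the a priori estimate $\lVert\phi\rVert_{H^1}\lesssim\lVert\Lambda\phi\rVert_{L^2}+\lVert\phi\rVert_{L^2}$ and then absorbing the lower-order term via the $H^{1/2}$-coercivity of $\Lambda$ together with $\int_{\Gamma^*}A\phi^2\,d\sigma\geq 0$. The two proofs encode the same analytic fact (that $(\mathcal A+\Lambda)^{-1}$ maps $L^2(\Gamma^*)$ boundedly into $H^1(\Gamma^*)$): your version makes the ``half an order gain'' mechanism and the precise role of each hypothesis more transparent, but it imports the pseudodifferential ellipticity of the Dirichlet-to-Neumann map, which the paper never states as a lemma; the paper's version uses only ingredients already recorded (trace theorem, Robin well-posedness) at the price of an excursion through $H^{3/2}(\Omega^*)$. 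Either way the hypotheses are used for the same purpose, and the conclusion follows.
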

\begin{proof}
On the one hand, by using the definitions of $\Lambda$ and $\mathcal{A}$,
we have
\begin{equation*}
(\Lambda+\mathcal{A})(M(dr))=\frac{\partial w^*}{\partial \textbf{n}}\bigg|_{\Gamma ^*}
+Aw^*,
\end{equation*}
where $w^*$ satisfies the following equation:
\begin{equation*}
    \begin{cases}
        -\Delta w^*=0\quad {\rm in}\quad \Omega^*, \\
        w^*=M(dr)\quad {\rm on}\quad \Gamma^* ,\\
        w^*=0\quad {\rm on}\quad\Sigma .
        \end{cases} 
\end{equation*}
By recalling the trace theorem, we have
\begin{equation*}
\left\lVert w^* \right\rVert _{H^{3/2}(\Omega^*)}
\gtrsim \left\lVert M(dr) \right\rVert  _{H^{1}(\Gamma^*)}
\thicksim  \left\lVert dr \right\rVert  _{H^{1}(\Gamma^*)}.
\end{equation*}
On the other hand, let $v\triangleq \frac{\partial w^*}{\partial \textbf{n}}+Aw^*$,
then
\begin{equation*}
    \begin{cases}
        -\Delta w^*=0\quad {\rm in}\quad \Omega^*, \\
        \frac{\partial w^*}{\partial \textbf{n}}+Aw^*=v\quad {\rm on}\quad \Gamma^*, \\
        w^*=0\quad {\rm on}\quad\Sigma.
        \end{cases} 
\end{equation*}
Using the well-posedness of second elliptic equations with Robin boundary conditions, we obtain
\begin{equation*}
\left\lVert w^* \right\rVert _{H^{3/2}(\Omega^*)}
\lesssim  
\left\lVert v \right\rVert  _{L^{2}(\Gamma^*)}
=\left\lVert (\Lambda+\mathcal{A})(M(dr)) \right\rVert  _{L^{2}(\Gamma^*)}.
\end{equation*}

Collecting the above two results, we arrive at
\begin{equation}
    \left\lVert (\mathcal{A}+\Lambda )(M(dr))\right\rVert _{L^{2}(\Gamma^*)}
    \gtrsim  \left\lVert dr \right\rVert _{H^{1}(\Gamma^*)}.
        \nonumber
\end{equation} 
Therefore, we have
\begin{equation}
d^2J(\Omega^*)[dr,dr]\gtrsim \|dr \Vert ^2_{H^{1}(\Gamma^*)},
\nonumber
\end{equation}
this completes the proof.
\end{proof}

\begin{Remark}[\cite{eppler2010tracking1}]\label{remark:rmk322}
If the domain $\{x=\rho\widehat{x}\in \mathbb{R} ^n:\widehat{x}\in \mathbb{S}^{n-1},0\leqslant \rho \leqslant r^*(\widehat{x}) \}$ is convex, $g$ is a positive constant function and $f\equiv0$, then 
\begin{equation}
d^2J(\Omega^*)[dr,dr]\gtrsim \|dr \Vert ^2_{H^{1}(\Gamma^{*})}.
\nonumber
\end{equation}
\end{Remark}
As a result, $J$ is a well-posed objective functional which tracks Dirichlet data at the free boundary.

\section{Motivations and discussions}

In this section we explain in detail the motivation to propose the objective functional $\frac{1}{2}\int_{\Gamma}\left(\frac{\partial w}{\partial \textbf{n}}\right)^2 d\sigma$. 

As was shown in \cite{eppler2010tracking2}, tracking Dirichlet data in $L^2(\Gamma)$ is ill-posed. An intuitive explanation is that the $L^2(\Gamma)$ regularity for the Dirichlet data is too weak. Because the data $f, g, h$ and the domain $\Omega$ are sufficiently smooth, we can expect a higher regularity of the solution $u$ and thus the Dirichlet data. 

Motivated by this observation, our starting point is to use a stronger norm on the free boundary to track Dirichlet data. Our first choice is to enhance the objective functional $J_2(\Omega)=\frac{1}{2}\|u\| _{L^2(\Gamma)}^2 $ to $\widetilde{J}(\Omega)=\frac{1}{2}\|u\| _{H^{1/2}(\Gamma)}^2$ to track Dirichlet data. We remark that $\| u\| _{H^{1/2}(\Gamma)}$ is the correct energy space for Dirichlet data in view of the standard variational solution to second order elliptic equations. There are several different equivalent definitions for the norm $\| \cdot \|_{H^{1/2}(\Gamma)}$, here we choose the one by using the Dirichlet-to-Neumann map for performing the shape calculus conveniently. Let
\begin{equation}
        \begin{cases}
        -\Delta u=f \quad {\rm in}\quad \Omega, \\
        -\frac{\partial u}{\partial \textbf{n}}=g \quad {\rm on}\quad \Gamma ,\\
        u=h \quad {\rm on}\quad\Sigma 
        \end{cases}\quad
        \text{and}\quad
        \begin{cases}
            -\Delta w=0 \quad {\rm in}\quad \Omega ,\\
            w=u \quad {\rm on}\quad\Gamma, \\
            w=0 \quad {\rm on}\quad \Sigma.
        \end{cases}
        \nonumber
\end{equation}
It is not difficult to prove that (cf. \cite{Gong})
\begin{equation}
\left\lVert u \right\rVert _{H^{1/2}(\Gamma)}^2 \approx \langle u,\mathcal{D}u\rangle_{H^{1/2}(\Gamma),H^{-1/2}(\Gamma)}=\langle u,\frac{\partial w}{\partial \textbf{n}} \rangle_{H^{1/2}(\Gamma),H^{-1/2}(\Gamma)},    
\end{equation}
where $\mathcal{D}:H^{1/2}(\Gamma)\rightarrow H^{-1/2}(\Gamma)$ is the standard Dirichlet-to-Neumann map such that $\mathcal{D}u:=\frac{\partial w}{\partial \textbf{n}}$. As a result, we can directly set $\widetilde{J}(\Omega)=\frac{1}{2}\langle u,\frac{\partial w}{\partial \textbf{n}} \rangle_{H^{1/2}(\Gamma),H^{-1/2}(\Gamma)}$.

We can obtain the Euler derivative of $\widetilde{J}(\Omega)$ as follows: 
\begin{equation}
    d\widetilde{J}(\Omega)[V]
   =\frac{1}{2}\int_{\Gamma} v_{n}
    \left\{ -\nabla w\cdot \nabla w
        -2g\frac{\partial w}{\partial \textbf{n}}
    +2w\left[f-\mathcal{H} g-\frac{\partial g}{\partial \textbf{n}}\right] 
    \right\} d\sigma.
\end{equation}
The shape Hessian at the optimum $\Omega^*$ is given by:
\begin{equation}
        \begin{split}
    d^2\widetilde{J}(\Omega^*)[dr_1,dr_2]
    &=\int_{\mathbb{S} ^{n-1}}dr_1r^{*(n-1)}
    \left\{
        dw[dr_2]\left[f-\mathcal{H}  g-\frac{\partial g}{\partial \textbf{n}}\right]   
    -g\frac{\partial dw[dr_2]}{\partial \textbf{n}}
    \right\} d\widehat{\sigma}.
        \end{split}
    \end{equation}
Analog to the above section, we have proven that 
\begin{equation}
    d^2\widetilde{J}(\Omega^*)[dr_1,dr_2]=
\left\langle\Upsilon ((\mathcal{A}+\Lambda)(M(dr_1))),
(\mathcal{A}+\Lambda)(M(dr_2)) \right\rangle _{L^2(\Gamma^*)}
\quad \forall dr_1,dr_2 \in X
\end{equation}
and the shape Hessian is a bilinear continuous functional on $H^1(\Gamma)\times H^1(\Gamma)$ in the neighborhood $B_\delta ^X(r^*)$ of $\Omega^*$, where the definitions of $\Upsilon, \mathcal{A}, M$ and $\Lambda$ are the same with that in the above section. However, we only proved that
\begin{equation*}
d^2\widetilde{J}(\Omega^*)[dr,dr]\geq c_E\left\lVert dr\right\rVert _{H^{1/2}(\Gamma^*)}^2 \quad \forall dr \in X,
\end{equation*}
so we can not determine whether $\widetilde{J}$ is well-posed. 

Recalling that tracking Neumann data in $L^2(\Gamma)$ is well-posed (cf. \cite{eppler2010tracking1}), and the energy space for Neumann data allowing for a standard variational solution to second order elliptic equations is $H^{-1/2}(\Gamma)$. Let $u$ be the solution to problem (\ref{con:j1}), if we can show that the shape functional $\breve{J}(\Omega)={1\over 2}\|\frac{\partial u}{\partial \textbf{n}}+g\|_{H^{-1/2}(\Gamma)}^2$ is also not well-posed, we can say that tracking boundary conditions in energy spaces are not enough to ensure the well-posedness. To verify this conjecture, we study the well-posedness of the objective functional $\breve{J}(\Omega)$. Since the standard definition of the norm $\|\cdot\|_{H^{-1/2}(\Gamma)}$ by using duality is not convenient for performing shape calculus, we use an equivalent definition through the Neumann-to-Dirichlet map. Assume that $u$ is the solution to problem (\ref{con:j1}), let $z\in H^1(\Omega)$ satisfy
\begin{equation*}
    \begin{cases}
    -\Delta z=0 \quad {\rm in}\quad \Omega, \\
    \frac{\partial z}{\partial n}=\frac{\partial u}{\partial \textbf{n}}+g \quad {\rm on}\quad \Gamma, \\
    z=0 \quad {\rm on}\quad\Sigma,
    \end{cases}
\end{equation*}
it is not difficult to prove that (cf. \cite{Apel})
\begin{equation}
\left\lVert \frac{\partial u}{\partial \textbf{n}}+g \right\rVert _{H^{-1/2}(\Gamma)}^2 \approx \Big\langle \frac{\partial u}{\partial \textbf{n}}+g,\mathcal{N}(\frac{\partial u}{\partial \textbf{n}}+g)\Big\rangle_{H^{-1/2}(\Gamma),H^{1/2}(\Gamma)}=\langle \frac{\partial u}{\partial \textbf{n}}+g,z \rangle_{H^{-1/2}(\Gamma),H^{1/2}(\Gamma)},    
\end{equation}
where $\mathcal{N}:H^{-1/2}(\Gamma)\rightarrow H^{1/2}(\Gamma)$ is the standard Neumann-to-Dirichlet map such that $\mathcal{N}(\frac{\partial u}{\partial \textbf{n}}+g):=z|_{\Gamma}$. As a result, we can directly set $\breve{J}(\Omega)=\frac{1}{2}\langle z,\frac{\partial u}{\partial \textbf{n}}+g\rangle_{H^{1/2}(\Gamma),H^{-1/2}(\Gamma)}$.

We can obtain the Euler derivative of $\breve{J}(\Omega)$ as follows: 
\begin{equation}
   d\breve{J}(\Omega)[V]=
   \int_{\Gamma}v_n\left\{-\frac{\partial u}{\partial \textbf{n}}
   \frac{\partial z}{\partial \textbf{n}}+z\mathcal{H}\frac{\partial z}{\partial \textbf{n}}+z\left[ \frac{\partial g}{\partial \textbf{n}}
   +\frac{\partial ^2u}{\partial \textbf{n}^2}
   \right]-\frac{1}{2}\nabla_{\Gamma}z\cdot\nabla_{\Gamma}z
   \right\}d\sigma.
\end{equation}
The shape Hessian at the optimum $\Omega^*$ is given by:
 \begin{equation}
        \begin{split}
    d^2\breve{J}(\Omega^*)[dr_1,dr_2]
    &=\int_{\mathbb{S} ^{n-1}}dr_1r^{*(n-1)}
    \left\{
        dz[dr_2]\left[f-\mathcal{H}  g-\frac{\partial g}{\partial \textbf{n}}\right]   
    -g\frac{\partial dz[dr_2]}{\partial \textbf{n}}
    \right\} d\widehat{\sigma}.
        \end{split}
    \end{equation}
Moreover,
\begin{equation*}
    d^2\breve{J}(\Omega^*)[dr_1,dr_2]=
\left\langle\Upsilon ((\mathcal{A}+\Lambda)(M(dr_1))),(\mathcal{A}+\Lambda)(M(dr_2)) \right\rangle _{L^2(\Gamma^*)} \quad \forall dr_1,dr_2 \in X.
    \nonumber
\end{equation*}
The situation is completely the same with that of $\widetilde{J}$, so we cannot determine whether $\breve{J}$ is well-posed.

Combining the above results for tracking Dirichlet or Neumann data in energy spaces we see that the energy space is not sufficient for the well-posedness. This observation also explains why the problem (\ref{J4_obj})-(\ref{J4_state}) is not well-posed. However, the well-posedness of tracking Neumann data in $L^2(\Gamma)$ encourages us to use a stronger norm that $H^{1/2}(\Gamma)$, this motivates our second choice of the objective functional $\widehat{J}(\Omega)={1\over 2}\left\lVert u \right\rVert _{H^1(\Gamma)}^2$. That is, we consider the following shape optimization problem 
\begin{equation}
    \inf_{\Omega } \widehat{J}(\Omega )=\frac{1}{2}\int_{\Gamma}u^2+\nabla_{\Gamma}u \cdot \nabla_{\Gamma}u d\sigma,
\end{equation}
subject to the Neumann problem (\ref{con:j2}). This choice of objective functional was also suggested in \cite{leugering2012constrained} but no analysis is available. 
However, we encountered great difficulties when dealing with the shape Hessian, because we shall compute the shape derivative of $\Delta_\Gamma u$ on the boundary. In addition, there are also difficulties in rewriting the shape Hessian into an operator form. Nevertheless, by observing the algebraic structure of the choices of $\frac{1}{2}\int_{\Gamma}u^2d\sigma$ and $\frac{1}{2}\langle u,\frac{\partial w}{\partial \textbf{n}} \rangle_{H^{1/2}(\Gamma),H^{-1/2}(\Gamma)}$, we choose $\frac{1}{2}\int_{\Gamma}\left(\frac{\partial w}{\partial \textbf{n}}\right)^2d\sigma$ and expect it is an equivalent norm with ${1\over 2}\left\lVert u \right\rVert _{H^1(\Gamma)}^2$. Indeed, we can show the equivalence in the following lemma.

\begin{Lemma}
Let $(u,w)$ be the solutions of (\ref{con:equ1d6}). We have the equivalence 
\begin{equation*}
  \left\lVert u \right\rVert _{H^1(\Gamma)}^2\approx  \int_{\Gamma}\left(\frac{\partial w}{\partial \textbf{n}}\right)^2d\sigma.
\end{equation*}
\end{Lemma}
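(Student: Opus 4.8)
The plan is to read $\frac{\partial w}{\partial \textbf{n}}\big|_{\Gamma}$ as the image of the Dirichlet datum $u|_{\Gamma}$ under the Dirichlet-to-Neumann map $\mathcal{D}$ introduced above, so that the asserted equivalence becomes the two-sided estimate $\|u\|_{H^1(\Gamma)}\approx\|\mathcal{D}u\|_{L^2(\Gamma)}$. I would prove the two inequalities separately, each by pairing an elliptic regularity estimate in $\Omega$ with a trace estimate on $\Gamma$, reusing exactly the ingredients already invoked in the proof of Theorem~\ref{thm:dl321}.

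For the lower bound $\|u\|_{H^1(\Gamma)}\lesssim\|\frac{\partial w}{\partial \textbf{n}}\|_{L^2(\Gamma)}$, I would set $\phi:=\frac{\partial w}{\partial \textbf{n}}\in L^2(\Gamma)$ and observe that $w$ is the unique solution of the mixed problem that is harmonic in $\Omega$, carries the Neumann datum $\phi$ on $\Gamma$ and vanishes on $\Sigma$. Since $\Gamma$ and $\Sigma$ are disjoint smooth components (so no junction singularities arise), well-posedness of this Neumann--Dirichlet problem yields $\|w\|_{H^{3/2}(\Omega)}\lesssim\|\phi\|_{L^2(\Gamma)}$, precisely as in the Robin estimate used in Theorem~\ref{thm:dl321}. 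The Dirichlet trace theorem $H^{3/2}(\Omega)\to H^{1}(\Gamma)$ then gives $\|u\|_{H^1(\Gamma)}=\|w|_{\Gamma}\|_{H^1(\Gamma)}\lesssim\|w\|_{H^{3/2}(\Omega)}\lesssim\|\phi\|_{L^2(\Gamma)}$.

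For the upper bound the difficulty is that the normal-derivative trace does \emph{not} map $H^{3/2}(\Omega)$ boundedly into $L^2(\Gamma)$, so one cannot simply solve the Dirichlet problem for $u\in H^1(\Gamma)$ and read off $\frac{\partial w}{\partial \textbf{n}}\in L^2$. Instead I would establish two endpoint mapping properties of $\mathcal{D}$ and interpolate. On the one hand, the standard variational theory gives $\mathcal{D}:H^{1/2}(\Gamma)\to H^{-1/2}(\Gamma)$ bounded (the setting in which $\Lambda$ is $H^{1/2}$-coercive). On the other hand, if $u\in H^{3/2}(\Gamma)$ then the Dirichlet problem has solution $w\in H^2(\Omega)$ by elliptic regularity, and the normal-derivative trace $H^{2}(\Omega)\to H^{1/2}(\Gamma)$ now lies in the non-borderline range, so $\mathcal{D}:H^{3/2}(\Gamma)\to H^{1/2}(\Gamma)$ is bounded. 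Interpolating on the closed manifold $\Gamma$, with $[H^{1/2},H^{3/2}]_{1/2}=H^{1}$ and $[H^{-1/2},H^{1/2}]_{1/2}=L^2$, yields $\mathcal{D}:H^{1}(\Gamma)\to L^2(\Gamma)$ bounded, i.e. $\|\frac{\partial w}{\partial \textbf{n}}\|_{L^2(\Gamma)}\lesssim\|u\|_{H^1(\Gamma)}$.

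I expect the interpolation step, or equivalently the fact that $\mathcal{D}$ is a first-order operator mapping $H^{s}(\Gamma)\to H^{s-1}(\Gamma)$ for $s\in[1/2,3/2]$, to be the crux: it is what circumvents the failure of the $H^{3/2}(\Omega)\to L^2(\Gamma)$ normal trace and legitimizes the upper bound, while the lower bound is a direct regularity-plus-trace argument already present in Theorem~\ref{thm:dl321}. Combining the two bounds gives the claimed equivalence $\|u\|_{H^1(\Gamma)}^2\approx\int_{\Gamma}\left(\frac{\partial w}{\partial \textbf{n}}\right)^2 d\sigma$.
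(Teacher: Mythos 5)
Your proof is correct, and your lower bound (solve the Neumann problem for $w$ with datum $\phi=\frac{\partial w}{\partial \textbf{n}}$, get $\|w\|_{H^{3/2}(\Omega)}\lesssim\|\phi\|_{L^2(\Gamma)}$, then apply the Dirichlet trace $H^{3/2}(\Omega)\to H^1(\Gamma)$) is exactly the paper's argument for that direction. Where you diverge is the upper bound. The paper runs the symmetric chain $\|\frac{\partial w}{\partial \textbf{n}}\|_{L^2(\Gamma)}\lesssim\|w\|_{H^{3/2}(\Omega)}\lesssim\|u\|_{H^1(\Gamma)}$, i.e.\ it invokes the normal-derivative trace $H^{3/2}(\Omega)\to L^2(\Gamma)$ together with $H^{3/2}$-regularity of the Dirichlet problem with $H^1(\Gamma)$ data. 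You explicitly flag that this normal trace sits at the endpoint where the generic trace theorem fails, and you circumvent it by establishing the two mapping properties $\mathcal{D}:H^{1/2}(\Gamma)\to H^{-1/2}(\Gamma)$ and $\mathcal{D}:H^{3/2}(\Gamma)\to H^{1/2}(\Gamma)$ and interpolating to get $\mathcal{D}:H^{1}(\Gamma)\to L^2(\Gamma)$. Both routes lead to the same conclusion --- the paper's endpoint trace estimate is in fact valid here because $w$ is harmonic (equivalently, because $\mathcal{D}$ is an operator of order one, which is precisely what your interpolation argument proves) --- but your version makes explicit the point the paper leaves implicit, at the cost of importing an interpolation argument for the Dirichlet-to-Neumann map. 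The paper's proof is shorter and stays entirely within the regularity-plus-trace toolkit already used in Theorem~\ref{thm:dl321}; yours is more robust at the borderline exponent.
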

\begin{proof}
  On the one hand, by recalling the trace theorem and the well-posedness of second order elliptic equations with Dirichlet boundary conditions, we have
   \begin{equation}
       \int_{\Gamma}\left(\frac{\partial w}{\partial \textbf{n}}\right)^2d\sigma
       =\left\lVert\frac{\partial w}{\partial \textbf{n}}
        \right\rVert_{L^2(\Gamma)}^2
       \lesssim 
       \left\lVert w
        \right\rVert_{H^{3/2}(\Omega)}^2
        \lesssim 
       \left\lVert u
        \right\rVert_{H^{1}(\Gamma)}^2.
   \end{equation}
   On the other hand, let $\widehat{g}:=\frac{\partial w}{\partial n}$ on $\Gamma$, then
   \begin{equation*}
    \begin{cases}
        -\Delta w=0\quad {\rm in}\quad \Omega, \\
        \frac{\partial w}{\partial \textbf{n}}=\widehat{g}\quad {\rm on}\quad \Gamma, \\
        w=0\quad {\rm on}\quad\Sigma.
        \end{cases} 
\end{equation*}
   Therefore, by the trace theorem and the well-posedness of second order elliptic equations with Neumann boundary conditions, we have
   \begin{equation*}
       \left\lVert u
        \right\rVert_{H^{1}(\Gamma)}
        =\left\lVert w
        \right\rVert_{H^{1}(\Gamma)}
        \lesssim
         \left\lVert w
        \right\rVert_{H^{3/2}(\Omega)}
        \lesssim
        \left\lVert \widehat{g}
        \right\rVert_{L^2(\Gamma)}
        =\left\lVert\frac{\partial w}{\partial \textbf{n}}\right\rVert_{L^2(\Gamma)}.
   \end{equation*}
 Collecting the above two results, we arrive at the conclusion.
\end{proof}

Now, we summarize the known results for tracking different boundary conditions of the Bernoulli free boundary problem in Table \ref{tab:biaosum}. We see that tracking Dirichlet and Neumann data are both well-posed by choosing appropriate objective functionals. 
\begin{table}[h]
    \centering
    \caption{The well-posedness of different objective functionals.}
    \label{tab:biaosum}
\begin{tabular}{|c|c|c|c|c|}
    \hline
    objective&continuity space &coercivity space &tracking data type&well-posedness\\
    \hline
    $J_1$ (\cite{eppler2010tracking1})&$H^1(\Gamma)$& $H^1(\Gamma)$&Neumann & well-posed \\
    \hline
    $J_2$ (\cite{eppler2010tracking2})&$H^1(\Gamma)$& $L^2(\Gamma)$&Dirichlet&algebraically ill-posed\\
    \hline
    $J_3$ (\cite{eppler2006efficient})&$H^{1/2}(\Gamma)$& $H^{1/2}(\Gamma)$&Neumann&well-posed\\
    \hline
    $J_4$ (\cite{eppler2012kohn})&$H^{1}(\Gamma)$& $H^{1/2}(\Gamma)$&Neumann+Dirichlet&algebraically ill-posed \\
    \hline
    $\breve{J}$ (this paper)&$H^{1}(\Gamma)$& $H^{1/2}(\Gamma)$& Neumann&algebraically ill-posed \\
    \hline
    $\widetilde{J}$ (this paper) &$H^{1}(\Gamma)$& $H^{1/2}(\Gamma)$& Dirichlet&algebraically ill-posed \\
    \hline
     $J$  (this paper)&$H^{1}(\Gamma)$& $H^{1}(\Gamma)$& Dirichlet&well-posed \\
    \hline
\end{tabular}
\end{table}

\section{Numerical experiments}
In this section we use the nonlinear Ritz-Galerkin approximation method (cf. \cite{eppler2010tracking1}) to solve the shape optimization problem (\ref{con:jnew})-(\ref{con:equ1d6}). 

\subsection{Nonlinear Ritz-Galerkin approximation method}
For simplicity, we assume that the domain $\Omega$ is a star-shaped domain. According to the previous notation, its free boundary is parameterized by
\begin{equation*}
    \Gamma=\left\{x=dr(\widehat{x})\widehat{x}: \quad\widehat{x}\in\mathbb{S}^{n-1} \right\} ,    
\end{equation*}
where $dr\in X $. 
Consequently, we  can transform the shape optimization problem (\ref{con:jnew})-(\ref{con:equ1d6}) into 
\begin{equation*}
    \inf_{r\in X}J(r).
\end{equation*}
The so-called Ritz-Galerkin method is to select a finite-dimensional subspace $V_N\subset X$ and to solve the finite-dimensional optimization problem:
\begin{equation*}
    (\mathbf{P}_N)\quad \inf_{r_N\in V_N}J(r_N).
\end{equation*}
Herein, the dimension of the space  $V_N$ depends linearly on $N$. As can be seen from the following text, the space $V_N$ we selected has a dimension of $2N+1$.

Since the global optimization problem is difficult to deal with and the well-posedness of the objective functional $J$ is also local, we consider the following local optimization problem:
\begin{equation*}
    (\mathbf{P}^{\delta }) \quad \inf_{r}J(r), \quad r\in \overline{B_{\delta}^X(r^*)},
\end{equation*}
where $r^*$ is the parametrization of the free boundary of $\Omega^*$. Correspondingly, the Ritz-Galerkin discretization of this local problem is
\begin{equation}
    (\mathbf{P}^{\delta }_N)\quad \inf_{r}J(r),\quad r\in V_N\cap \overline{B_{\delta}^X(r^*)}.
\end{equation}


Let $V$ be the velocity field induced by $dr$ and $t$ be the transformation step, we can define  the transform of the free boundary  from 
\begin{equation*}
    \Gamma=\left\{r(\widehat{x})\widehat{x}:\quad \widehat{x} \in \mathbb{S} ^{n-1} \right\} 
\end{equation*}
to
\begin{equation*}
    \Gamma_t=\left\{r(\widehat{x})\widehat{x}+tdr(\widehat{x})\widehat{x}:\quad \widehat{x} \in \mathbb{S} ^{n-1} \right\}. 
\end{equation*}
For simplicity, we only consider the case $n=2$, the three-dimensional case can be found in \cite{harbrecht2008newton,eppler2010tracking1}.
Then the free boundary can be equivalently parameterized as
\begin{gather*}
    \Gamma=\left\{r(\theta )\binom{\cos\theta}{\sin\theta} :\quad\theta \in [0,2\pi] \right\} ,\quad
    \Gamma_t=\left\{r(\theta)\binom{\cos\theta}{\sin\theta}+tdr(\theta)\binom{\cos\theta}{\sin\theta}:\quad\theta \in [0,2\pi] \right\}.
\end{gather*}

Since $r(\theta)$ is a periodic function, it has a Fourier series expansion and we choose the first $N$ terms truncation. That is 
\begin{equation*}
    r(\theta)\approx a_0+\sum_{i= 1}^{N}a_i  \cos(i\theta)
    +\sum_{i= 1}^{N}b_i  \sin(i\theta).
\end{equation*}
Let 
\begin{equation*}
V_N={\rm span}\left\{1,\cos\theta,\cos(2\theta),\dots,\cos(N\theta),\sin \theta,\sin(2\theta),\dots,\sin(N\theta)\right\} .  
\end{equation*}
Then $J(\Omega)$ is entirely determined by the real vector $\left(a_0,a_1,\dots,a_N,b_1,\dots,b_N\right)$ and the problem is transformed into a classical optimization problem in a real vector space.

Let $J(\Omega)=F(a_0,a_1,\dots,a_N,b_1,\dots,b_N)$, where the domain $\Omega$ is determined by  $r_N(\theta)=a_0+\sum_{i= 1}^{N}a_i  \cos(i\theta)+\sum_{i= 1}^{N}b_i  \sin(i\theta)$. For all $0\leq i\leq N$:
\begin{gather*}
    \frac{\partial F(a_0,a_1,\dots,a_N,b_1,\dots,b_N)}{\partial a_i}
    =\lim_{t \to 0^+}\frac{F(a_0,\dots,t+a_i\dots,a_N,b_1,\dots,b_N)-F(a_0,\dots,a_i\dots,a_N,b_1,\dots,b_N)}{t} , 
\end{gather*}
the boundary function corresponding to $\left(a_0,\dots,t+a_i,\dots,a_N,b_1,\dots,b_N\right)$ is 
$r_N(\theta)+t\cos(i\theta)$.
Therefore, the function of the induced velocity field is $dr(\theta)=\cos(i\theta)$, and the value of the velocity field on the boundary is $V(0,r_N(\widehat{x} )\widehat{x} )=dr(\widehat{x} )\widehat{x}$.
Moreover,
\begin{equation}
    \frac{\partial F(a_0,a_1,\dots,a_N,b_1,\dots,b_N)}{\partial a_i}
    =dJ(\Omega)[V].\label{Partial_a}
\end{equation}
Similarly,
\begin{equation}
    \frac{\partial F(a_0,a_1,\dots,a_N,b_1,\dots,b_N)}{\partial b_i}
    =dJ(\Omega)[V],\label{Partial_b}
\end{equation}
where $V(0,r(\widehat{x} )\widehat{x} )=dr(\widehat{x} )\widehat{x}$, $dr(\theta)=\sin(i\theta)$.

From (\ref{J_Euler}) we see that the Euler derivative involves the curvature of the free boundary which is difficult to calculate. So we propose to avoid it by using the tangential Green's formula. In fact,
\begin{equation*}
    \begin{split}
        dJ(\Omega)[V]
        &=\int_{\Gamma} v_n\left\{-\frac{1}{2}p^2\mathcal{H} 
        +p\left[ f-\mathcal{H} g-\frac{\partial g}{\partial \textbf{n}} \right] 
        -(g+p)\frac{\partial p}{\partial \textbf{n}}\right\} d\sigma \\
          &=\int_{\Gamma}v_n\left\{ 
        p\left[ f-\frac{\partial g}{\partial \textbf{n}} \right] 
        -(g+p)\frac{\partial p}{\partial \textbf{n}}\right\} d\sigma
        -\int_{\Gamma}v_n\left(\frac{1}{2}p^2+pg\right) \mathcal{H} 
          d\sigma \\
          &=\int_{\Gamma}v_n\left\{ 
            p\left[ f-\frac{\partial g}{\partial \textbf{n}} \right] 
            -(g+p)\frac{\partial p}{\partial \textbf{n}}\right\} d\sigma
            -\int_{\Gamma} {\rm div}_{\Gamma}\left(\left(\frac{1}{2}p^2+pg\right)V \right) d\sigma\\
        &=\int_{\Gamma}v_n\left\{ 
            p\left[ f-\frac{\partial g}{\partial \textbf{n}} \right] 
            -(g+p)\frac{\partial p}{\partial \textbf{n}}\right\} d\sigma\\
            &-\int_{\Gamma}
            \left\{{\rm div}\left(\left(\frac{1}{2}p^2+pg\right)V\right) 
            -D\left(\left(\frac{1}{2}p^2+pg\right)V\right) \textbf{n}\cdot \textbf{n}\right\} 
            d\sigma.
    \end{split}
\end{equation*}

In the identity (\ref{Partial_a}), 
according to the analysis above, we know that
the value of $V(0,x)$ at the boundary point $x=(x_1,x_2) \in \Gamma$ is $\cos(i\theta)$, where $\theta$ is the polar angle of the corresponding point $\widehat{x}\in \mathbb{S} ^{n-1}$ of $x\in \Gamma$.
$\theta$ is also the polar angle of $x$ because $x$ and $\widehat{x}$ are collinear. The polar angle of $(x_1,x_2)$ is denoted by $\theta(x_1,x_2)$, and we have
\begin{equation*}
    V(0,x)=\cos(i\theta(x_1,x_2))\binom{\cos \theta(x_1,x_2) }{\sin\theta(x_1,x_2) },\quad x=(x_1,x_2) \in \Gamma    .
\end{equation*}
From the expression of the Euler derivative, $dJ(\Omega)[V]$ is completely determined by the value of $V(0,x)$ on $\Gamma$. In other words, if $U(x)=W(x),\forall x\in \Gamma$, then $dJ(\Omega)[U]=dJ(\Omega)[W]$.
As can be seen from the above, in order to avoid calculating the curvature, we need to know the value of $V(0,x)$ in the neighborhood of $\Gamma$.
Let $U(x)=V(0,x)$, $x\in \Gamma$, we can perform the following harmonic extensions on $V(0,x)=(v_1,v_2)$:
\begin{equation*}
    \begin{cases}
        -\Delta w_1=0\quad {\rm in}\quad \Omega, \\
        w_1=v_1\quad {\rm on}\quad \Gamma ,\\
        w=0\quad {\rm on}\quad\Sigma 
    \end{cases}
    \text{and}\quad
    \begin{cases}
        -\Delta w_2=0\quad {\rm in}\quad \Omega, \\
        w_2=v_2\quad {\rm on}\quad \Gamma ,\\
        w_2=0\quad {\rm on}\quad\Sigma .
        \end{cases}
\end{equation*}
Then $dJ(\Omega)[V]=dJ(\Omega)[W]$, where $W=(w_1,w_2)$.
Meanwhile, $W$ ensures that the inner boundary $\Sigma$ remains fixed during the transformation.

We summarize the formulae for calculating partial derivatives of $F$ as follows:
\begin{equation*}
    \frac{\partial F(a_0,a_1,\dots,a_N,b_1,\dots,b_N)}{\partial a_i}
    =dJ(\Omega)[W],\quad i=0,1,\dots,N,
\end{equation*}
where $\Omega$ is the domain determined by $\left(a_0,a_1,\dots,a_N,b_1,\dots,b_N\right)$
and $W$ is a harmonic extension of
\begin{equation*}
    V(x)=
    \begin{cases}
        \cos(i \theta(x_1,x_2))\binom{\cos \theta(x_1,x_2) }{\sin  \theta(x_1,x_2) } \quad \quad x=(x_1,x_2)\in \Gamma ,\\
        0\quad x=(x_1,x_2)\in \Sigma.
    \end{cases}
\end{equation*}
Similarly, 
\begin{equation*}
    \frac{\partial F(a_0,a_1,\dots,a_N,b_1,\dots,b_N)}{\partial b_i}
    =dJ(\Omega)[W],\quad i=1,\dots,N,
\end{equation*}
where $\Omega$ is domain determined by $\left(a_0,a_1,\dots,a_N,b_1,\dots,b_N\right)$
and $W$ is a harmonic extension of
\begin{equation*}
    V(x)=
    \begin{cases}
        \sin(i \theta(x_1,x_2))\binom{\cos \theta(x_1,x_2) }{\sin  \theta(x_1,x_2)}
        \quad x=(x_1,x_2)\in \Gamma ,\\
        0\quad x=(x_1,x_2)\in \Sigma.
    \end{cases}
\end{equation*}

We use FreeFem++ (\cite{Hecht}) to solve the partial differential equations involved in calculating partial derivatives. With the partial derivative calculation formulae, we can use the gradient descent algorithm or BFGS algorithm to solve the following optimization problem:
\begin{equation*}
\inf_{\left(a_0,a_1,\dots,a_N,b_1,\dots,b_N\right)\in \mathbb{R}^{2N+1}} F\left(a_0,a_1,\dots,a_N,b_1,\dots,b_N\right).
\end{equation*}



Let $\textbf{x}\triangleq \left(a_0,a_1,\dots,a_N,b_1,\dots,b_N\right)$, the gradient descent  algorithm is presented below \cite[Chapter 3, pp. 56-57]{yyx}:

\begin{tabular*}{\textwidth}{l}
\toprule
$$\textbf{Algorithm}\quad$$Gradient descent method \\
\midrule
1.$$\textbf{Initialization}$$:
Choose an initial point $\textbf{x}^0$, an initial step size $\alpha>0$, a step size lower bound $\varepsilon $, the \\
\quad\quad maximum  number of iterations $M$, $k=0$,
$\beta_1,\beta_2 \in(0,1)$, and an auxiliary variable $z=0$.
\\
2. $$\textbf{Process}$$:\\
3.
\quad$$\textbf{for}$$ $k=0,1,2,\dots,M$ $$\textbf{do}$$\\
4.
\quad\quad$z=F(\textbf{x}^k-\alpha\nabla F(\textbf{x}^k))$.\\
5.
\quad\quad $$\textbf{if}$$ $z<F(\textbf{x}^k)$
$$\textbf{then}$$ \\ 
6.
\quad\quad\quad $\alpha=\alpha/\beta_1$. \\
7.
\quad\quad $$\textbf{end if}$$\\
8.
\quad\quad $$\textbf{while}$$ $z\geq F(\textbf{x}^k)$ and $\alpha\geq \varepsilon$ 
$$\textbf{do}$$\\
9.
\quad\quad \quad $\alpha=\alpha*\beta_2$,\\
10.\quad\quad \quad $z=F(\textbf{x}^k-\alpha\nabla F(\textbf{x}^k))$.\\
11.\quad\quad $$\textbf{end}$$
$$\textbf{while}$$\\
12.\quad\quad $$\textbf{if}$$ $\alpha<\varepsilon$
$$\textbf{then}$$ \\
13.\quad\quad\quad break;\\
14.\quad\quad $$\textbf{end if}$$\\
15.\quad\quad $\textbf{x}^{k+1}=\textbf{x}^k-\alpha\nabla F(\textbf{x}^k)$\\
16.\quad$$\textbf{end}$$
$$\textbf{for}$$\\
17. $$\textbf{Output}:$$ The newest $\textbf{x}^{k}$.\\
\bottomrule
\end{tabular*}

\begin{Remark}
Here we choose $\alpha<\varepsilon$ as the stopping criterion for two reasons. First, when the step size is small enough, the optimization variable changes only slightly,  correspondingly the shape will hardly change in the shape optimization procedure. On the other hand, when we choose a relatively large $N$, the norm of the gradient is often very large. In this case, it is not appropriate to use the norm of the gradient as a stopping criterion.
\end{Remark}


We remark that the $\|\cdot \|_{H^1(\Gamma^*)} $-norm coercivity of the shape Hessian $d^2J(\Omega^*)[\cdot,\cdot]$
 will ensure the existence of local optimal solutions for the nonlinear Ritz-Galerkin approximation method and its convergence, as presented in Theorem \ref{thm:DL214}. It is easy to see that the error
    $\|r^*_N-r^* \|_{H^1(\mathbb{S} ^{n-1})}\rightarrow 0$ when $V_N$ approaches to $X$.
    


\subsection{Numerical experiments}
The errors of the nonlinear Ritz-Galerkin approximation method come from two aspects: the first source is the truncation error of the subspace $V_N$ approximating to $X$; the second one is caused by solving optimization problems in the subspace. For the former, since $V_1\subset V_2 \subset \cdots \subset V_N \subset \cdots \subset X$, it depends on the selection of $N$; for the latter, the error of solving optimization problems in subspaces mainly comes from the approximation error of the Euler derivative which in turn depends on the numerical accuracy of $u,w,p$ and the velocity field $V$. Since we use finite element methods to solve the PDEs, the latter error depends on the mesh size of the partition of domains.

We conduct numerical experiments from these two perspectives to assess the quality of the numerical solution. According to Remark \ref{remark:rmk322}, we choose $f\equiv0$, $g\equiv3$, $h\equiv1$. In addition, we choose the fixed boundary as the boundary of $(-0.25,0.25)^2\setminus (0,0.25)^2$ and choose the initial free boundary as 
\begin{equation*}
    \Gamma=\left\{\left(\frac{2}{3}+\frac{1}{12}\cos(3t) \right)\binom{\cos t}{\sin t} :\quad t \in [0,2\pi] \right\} .
\end{equation*}
We refer to Fig. \ref{fig:chushiquyu1} for an illustration of the initial domain and its triangulation. In the experiment, we choose the parameters $\alpha=0.005$, $\beta_1=\frac{2}{3}, \beta_2=\frac{1}{2}$, $\varepsilon=10^{-4}$ and $M=200$.

\begin{figure}[!htbp]
    \centering
    \includegraphics[trim = 1mm 2mm 1mm 1mm, clip, width=0.50\textwidth]{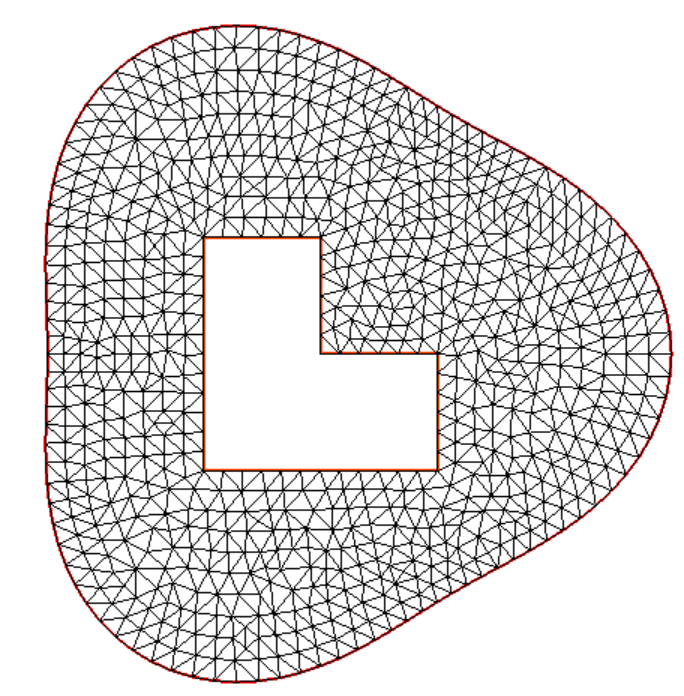}
    \caption{The initial domain and its triangulation for Experiment 1.}
    \label{fig:chushiquyu1}
\end{figure}


In the first group of experiments, the mesh size remains unchanged. We always control the number of mesh points on the free and fixed boundaries to be 100 and 48, respectively. We gradually increase the spatial dimension $N$ to observe the change of errors. Herein, the error refers to the value of the objective functional $J$, which can be regarded as the error of tracking Dirichlet data at the free boundary.
The experimental results are shown in Table \ref{tab:biao1} where NoI denotes the number of iterations. The convergence histories of the errors are shown in Fig. \ref{fig:wcqx} for different $N$, where plot (a) and plot (b) use different vertical axes; the final shapes are presented in Fig. \ref{fig:zzxz}.

\begin{table}[h]
\centering
    \caption{The change of errors with respect to the dimension $N$ for Experiment 1.}
    \label{tab:biao1}
\begin{tabular}{|c|c|c|c|c|}
    \hline
    N&dimension&initial error&NoI&final error\\
    \hline
    3&7&4.60377&15& 0.035097700 \\
    \hline
    4&9&4.60377&15&0.006114220\\
    \hline
    5&11&4.60377&15&0.001840740\\
    \hline
    6&13&4.60377&17&0.001218670\\
    \hline
    7&15&4.60377&28& 0.001033880 \\
    \hline
    8&17&4.60377&29&0.000846881\\
    \hline
    9&19&4.60377&36&0.000828824\\
    \hline
    10&21&4.60377&40&0.000854410\\
    \hline
    11&23&4.60377&50&0.000785928\\
    \hline
\end{tabular}
\end{table}

It can be observed from Table \ref{tab:biao1} that in the first few steps, the error can be significantly reduced when the spatial dimension is increased. When $N\geq 8$, the error is basically stable at around 0.0008. This is because, in addition to the inherent error brought by subspace approximations, finite element solutions of $u,w,p,V$ will also cause errors. In addition, geometric approximation errors appear because we use polygons to approximate smooth domains.

\begin{figure}[!htbp]
    \centering
    \includegraphics[trim = 1mm 2mm 1mm 1mm, clip, width=0.9\textwidth]{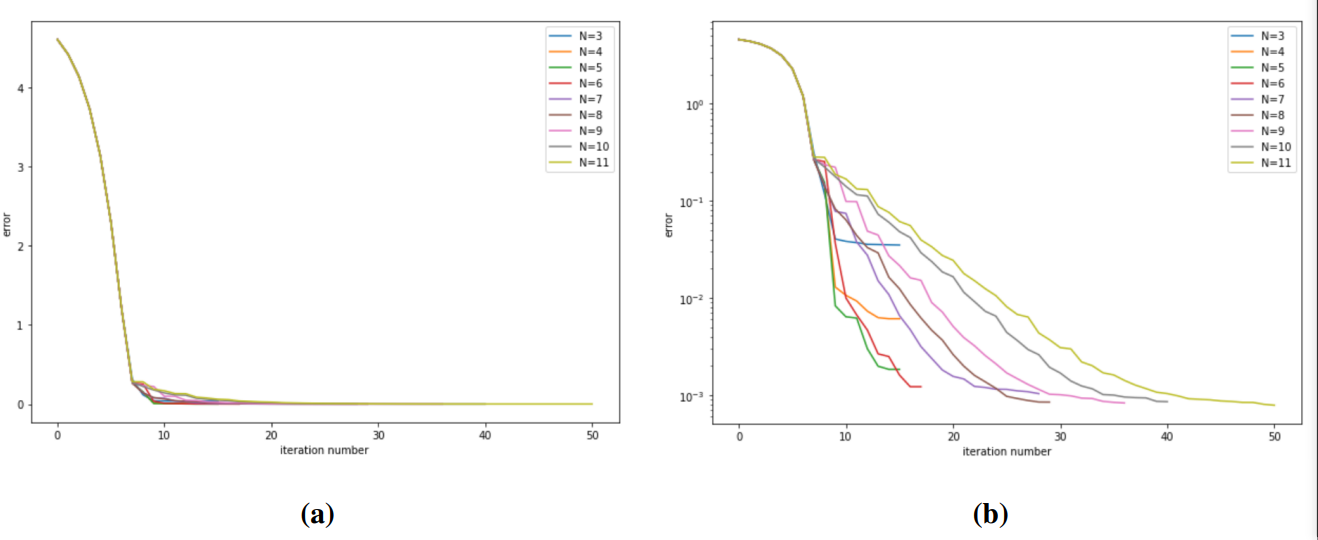}
    \caption{The convergence histories of errors with respect to the iteration number and $V_N$ for Experiment 1.}
    \label{fig:wcqx}
\end{figure}

\begin{figure}[!htbp]
    \centering
    \includegraphics[trim = 1mm 2mm 1mm 1mm, clip, width=0.9\textwidth]{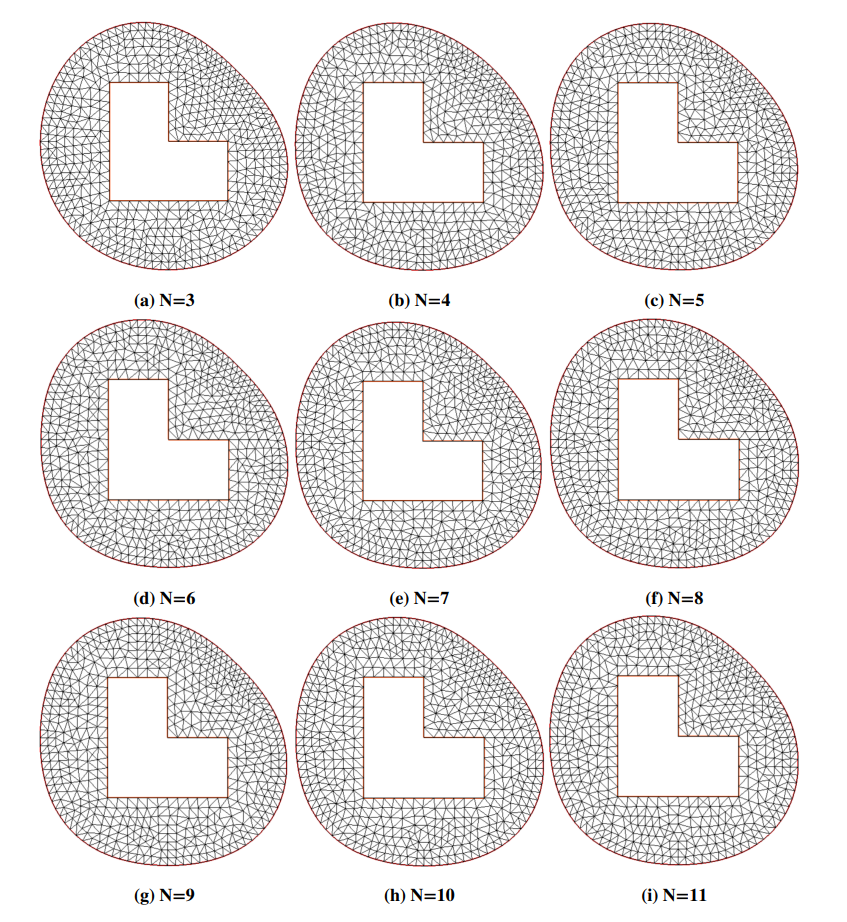}
    \caption{The final shapes obtained from different choices of $N$ for Experiment 1.}
    \label{fig:zzxz}
\end{figure}

In the second group of experiments we investigate the error behavior by using coarser and finer meshes than those used in the first group of experiments for $N=8$. The number of mesh points on the fixed and free boundaries is denoted by cnt1 and cnt2, respectively. The experimental results are shown in Table \ref{tab:biao2}.

\begin{table}[h]
    \centering
    \caption{The change of errors with respect to mesh levels for Experiment 2.}
    \label{tab:biao2}
\begin{tabular}{|c|c|c|c|c|}
    \hline
    cnt1&cnt2&initial error&NoI&final error\\
    \hline
    24&50& 4.51939&23 &0.004479200 \\
    \hline
   48&100&4.60377&29&0.000846881\\
    \hline
96&200&4.63436&29& 0.000702069\\
    \hline
    192&400&4.64579 &33& 0.000777745 \\
    \hline
\end{tabular}
\end{table}

According to Table \ref{tab:biao2}, reducing the mesh size can reduce the error overall, but it can also increase the size of the finite element stiffness matrix and computational cost, and decrease the computational accuracy that may not necessarily reduce errors. For example, the error has increased from cnt1=96, cnt2=200 to cnt1=192, cnt2=400. Another important reason is that we can only guarantee that the mesh size is smaller at the beginning, but the mesh size may be enlarged during the shape optimization procedure.


The third group of experiments is independent of the first two groups. Herein, an over-determined equation with a known exact solution $u$ and domain $\Omega^*$ is constructed to verify the performance of the algorithm. 
It can be verified that if the free boundary of the domain $\Omega^*$ is a unit circular surface with center $(x_0,y_0)$ and its fixed boundary is arbitrary, then it ensures that the over-determined equation
\begin{equation*}
    \begin{cases}
    -\Delta u =0\quad \rm{in} \quad \Omega^*,\\
   -\frac{\partial u}{\partial \textbf{n}}=C,u=0 \quad \rm{on} \quad \Gamma^*,\\
    u=-C\log\sqrt{(x-x_0)^2+(y-y_0)^2} \quad \rm{on} \quad \Sigma 
    \end{cases}
\end{equation*}
 admits a solution, given by $u=-C\log \sqrt{(x-x_0)^2+(y-y_0)^2}$.
 Next, we choose $C=1$, $x_0=0$, $y_0=0$ to verify the effectiveness of our algorithm. 
 
Since the free boundary of $\Omega^*$ is a circular surface centered at the origin, $r^*\in V_1$. We will investigate the effect of numerical computations by changing the mesh size. We choose the initial free boundary as
\begin{equation*}
    \Gamma=\left\{\left(\frac{2}{3}+\frac{1}{12}\sin(t) \right)\binom{\cos t}{\sin t} :\quad t \in [0,2\pi] \right\}, 
\end{equation*}
the fixed boundary is selected as a circle $\{x^2+y^2=0.3^2\}$.
The number of mesh points on the fixed boundary is denoted by cnt1, and that on the free boundary is denoted by cnt2.
The experimental results are shown in Table \ref{tab:biao3}, while Fig. \ref{fig:bsantu} shows the initial shape and the final shape with cnt1=160,  cnt2=400. As can be seen from Table \ref{tab:biao3}, we observe the similar convergence property as that of the second group of experiments.

\begin{table}[h]
    \centering
    \caption{The change of errors with respect to mesh levels for Experiment 3.}
    \label{tab:biao3}
\begin{tabular}{|c|c|c|c|c|}
    \hline
    cnt1&cnt2&initial error&NoI&final error\\
    \hline
    20&50& 4.16158&14 &0.000518986 \\
    \hline
    40&100& 4.14181&15&0.000512769\\
    \hline
    80&200&4.13028&20& 0.000486740\\
    \hline
    160&400& 4.12340 &27& 0.000545819 \\
    \hline
\end{tabular}
\end{table}

\begin{figure}[!htbp]
    \centering
    \includegraphics[trim = 1mm 2mm 1mm 1mm, clip, width=0.7\textwidth]{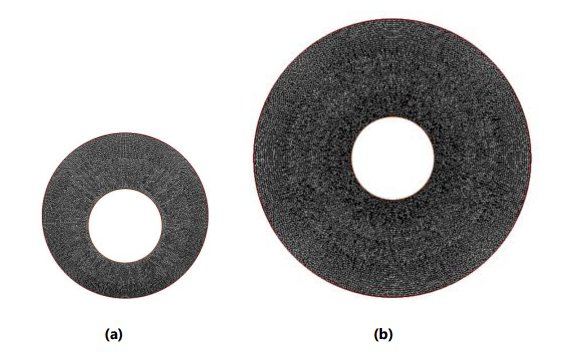}
    \caption{The initial and final shapes for Experiment 3.}
    \label{fig:bsantu}
\end{figure}



\end{document}